\newtheorem{dummy}{Dummy}
\newtheorem{lemma}[dummy]{Lemma}
\newtheorem{theorem}[dummy]{Theorem}
\newtheorem{proposition}[dummy]{Proposition}
\newtheorem{corollary}[dummy]{Corollary}
\theoremstyle{definition}
\newtheorem{example}[dummy]{Example}
\newtheorem{remark}[dummy]{Remark}
\newcommand{\ignore}[1]{}
\author{C. Brown}
\author{S. Pumpl\"un}
\author{A. Steele}
\email{Christian.Brown@nottingham.ac.uk; susanne.pumpluen@nottingham.ac.uk;
andrew.steele@aquaq.co.uk}
\address{School of Mathematical Sciences\\
University of Nottingham\\ University Park\\ Nottingham NG7 2RD\\
United Kingdom }
\keywords{Skew polynomials, automorphisms, automorphism group,
semifields,
 nonassociative algebras, Jha-Johnson semifields, cyclic semifields, Hughes Kleinfeld semifields, Sandler semifields.}
\subjclass[2010]{Primary: 12K10; Secondary: 17A35, 17A60, 16S36, 17A36}
\begin{document}

\title[Automorphisms and isomorphisms of semifields]
{Automorphisms and isomorphisms of Jha-Johnson semifields obtained from skew polynomial rings}

\begin{abstract}
We study the automorphisms of Jha-Johnson semifields  obtained from a right invariant irreducible twisted polynomial
$f\in K[t;\sigma]$, where $K=\mathbb{F}_{q^n}$ is a finite field and $\sigma$ an automorphism of $K$ of order $n$,
with a particular emphasis on inner automorphisms and the automorphisms of  Sandler and Hughes-Kleinfeld  semifields.
We include the automorphisms of some Knuth semifields (which do not arise from skew polynomial rings).

Isomorphism between Jha-Johnson semifields are considered as well.
\end{abstract}

\maketitle

\section*{Introduction}

  Semifields are finite unital nonassociative division algebras. Since
two semifields coordinatize the same non-Desarguesian projective
plane if and only if they are isotopic, semifields are usually classified up to isotopy rather than up to isomorphism
 and consequently, usually only their autotopism group is computed.

 Among the semifields with known automorphism groups
  are the  three-dimensional semifields over a field of characteristic not $2$
 (Dickson \cite{D06} and Menichetti \cite{M73,M77}), and the semifields with 16 elements
(Kleinfeld \cite{kleinfeld1960techniques} and Knuth \cite{Kn65}). Burmester
\cite{burmester1962commutative} investigated the automorphisms of Dickson commutative
semifields of order $p^{2n}, p \neq 2,$ and
Zemmer
\cite{zemmer1952subalgebras} proved the existence of commutative
semifields with a cyclic automorphism group of order $2n$.
More recent results can be found for instance in \cite{A1,A2,A3,A4,A5,A6,B1,B2}.

Jha-Johnson semifields (also called \emph{cyclic semifields} in  \cite{Dem}) were introduced in \cite{JJ}
and are generalizations of both Sandler and Hugh-Kleinfeld semifields.
With the exception of one subcase, the autotopism groups of all Jha-Johnson semifields were computed by Dempwolff
using representation theory \cite{Dem}.
One of our motivations for computing the automorphism groups of a certain family of Jha-Johnson  semifields
 is a question by C. H. Hering \cite{H}: Given a finite group $G$, does there exist a
semifield  such that $G$ is a subgroup of its automorphism group?
Another incentive arose from the search for classes of finite loops with non-trivial automorphism groups.
Examples can be now obtained as the multiplicative loops
of Jha-Johnson semifields \cite{P17}.

 We will compute the automorphism groups using noncommutative polynomials:
Let $K$ be a field, $\sigma$  an automorphism of $K$ with fixed field
$F$,  $R = K[t;\sigma]$ a twisted polynomial ring and $f\in R$. In 1967, Petit \cite{P66,P68}  studied
 a class of unital nonassociative algebras $S_f$ obtained by employing a right invariant irreducible  $f\in R = K[t;\sigma]$.

Every finite nonassociative Petit algebra is a \emph{Jha-Johnson semifield}.
 These algebras were studied by Wene \cite{W00} and more recently, Lavrauw and Sheekey \cite{LS}.

 While each Jha-Johnson semifield is isotopic to some
such algebra $S_f$ it is not necessarily itself isomorphic to an algebra  $S_f$. We will focus on
those Jha-Johnson semifields which are, and apply the results from \cite{BP} to investigate their automorphisms.

The structure of the paper is as follows: In Section \ref{sec:prel},
we introduce the basic terminology and define the algebras $S_f$.
 Given  a finite field $K=\mathbb{F}_{q^n}$,  an automorphism $\sigma$ of $K$ of order $n$ with $F={\rm Fix}(\sigma)=
 \mathbb{F}_q$ and  an irreducible polynomial $f\in K[t;\sigma]$ of degree $m$ that is not right
 invariant (i.e., where $ K[t;\sigma]f$ is not a two-sided ideal), we know
 the automorphisms of the Jha-Johnson semifields $S_f$
if  $n\geq m-1$ and a subgroup of them if $n<m-1$  \cite[Theorems 4, 5]{BP}.
 The automorphism groups of \emph{Sandler semifields} \cite{San62} (obtained by choosing $n\geq m$ and
 $f(t) = t^m - a\in K[t; \sigma]$,  $a \in K \setminus F$) are particularly relevant: for
all Jha-Johnson semifields $S_g$ with $g(t) = t^m - \sum_{i=0}^{m-1} b_i t^i
\in K[t;\sigma]$ and $b_0= a$, ${\rm Aut}_F(S_g)$ is a subgroup of ${\rm Aut}_F(S_f)$ (Theorem \ref{Aut(S_f)
subgroup corollary}). We summarize results on the automorphism groups,
and give examples when it is trivial and when ${\rm Aut}_F(S_f)\cong  \mathbb{Z}/n\mathbb{Z}$
(Theorem \ref{thm:automorphism_of_Sf_field_caseIV}).
 Inner automorphisms of Jha-Johnson semifields are considered in Section \ref{sec:inner}.
In Section \ref{sec:nonasscyclicfinite} we consider the special case that $n=m$ and
$f(t)=t^m-a$. In this case, the algebras $S_f$ are examples of Sandler semifields and also called
\emph{nonassociative cyclic algebras} $(K/F,\sigma,a)$. The automorphisms of $A=(K/F,\sigma,a)$  extending $id$ are
inner and form a cyclic group
isomorphic to ${\rm ker}(N_{K/F})$. We show when ${\rm Aut}_F(A)\cong {\rm ker}(N_{K/F})$
and hence consists only of inner automorphisms, when
${\rm Aut}_F(A)$ contains or equals  the dicyclic group ${\rm Dic}_r$ of order $4r=2q+2$, or when
${\rm Aut}_F(A) \cong \mathbb{Z} / ( s/m )\mathbb{Z} \rtimes_q \mathbb{Z} / (m^2) \mathbb{Z}$
contains or equals a semidirect product,
where $s = (q^m-1)/(q-1)$, $m>2$ (Theorems \ref{thm:Automorphisms of nonassociative cyclic algebras over finite fields} and \ref{thm:semidirect}).
 We  compute the automorphisms for the Hughes-Kleinfeld
 and most of the Knuth semifields in Section \ref{sec:knuth}. Not all Knuth semifields are algebras $S_f$, however, the
 automorphisms behave similarly in all but one case. We compute the automorphism groups in some examples,
 improving results obtained by Wene \cite{W09}.
 In Section \ref{sec:isofinite} we briefly investigate the isomorphisms between two semifields $S_f$ and $S_g$.
In particular, we classify  nonassociative cyclic algebras of prime
degree up to isomorphism.

 Sections of this work are part of the first and last  author's PhD theses
 \cite{CB, AndrewPhD} written under the supervision of
 the second author.

%
%

\section{Preliminaries} \label{sec:prel}


\subsection{Nonassociative algebras} \label{subsec:nonassalgs}


Let $F$ be a field and let $A$ be an $F$-vector space. $A$ is an
\emph{algebra} over $F$ if there exists an $F$-bilinear map $A\times
A\to A$, $(x,y) \mapsto x \cdot y$, denoted by juxtaposition
$xy$, the  \emph{multiplication} of $A$. An algebra $A$ is called
\emph{unital} if there is an element in $A$, denoted by 1, such that
$1x=x1=x$ for all $x\in A$. We will only consider unital algebras without saying so explicitly.

The {\it associator} of $A$ is given by $[x, y, z] =
(xy) z - x (yz)$. The {\it left nucleus} of $A$ is defined as $\mathbb{N}_l(A) = \{ x \in A \, \vert \, [x, A, A]  = 0 \}$, the {\it
middle nucleus} of $A$ is $\mathbb{N}_m(A) = \{ x \in A \, \vert \,
[A, x, A]  = 0 \}$ and  the {\it right nucleus} of $A$ is
$\mathbb{N}_r(A) = \{ x \in A \, \vert \, [A,A, x]  = 0 \}$. $\mathbb{N}_l(A)$, $\mathbb{N}_m(A)$, and $\mathbb{N}_r(A)$ are associative
subalgebras of $A$. Their intersection
 $\mathbb{N}(A) = \{ x \in A \, \vert \, [x, A, A] = [A, x, A] = [A,A, x] = 0 \}$ is the {\it nucleus} of $A$.
$\mathbb{N}(A)$ is an associative subalgebra of $A$ containing $F1$
and $x(yz) = (xy) z$ whenever one of the elements $x, y, z$ lies in
$\mathbb{N}(A)$.   The
 {\it center} of $A$ is ${\rm C}(A)=\{x\in A\,|\, x\in \text{Nuc}(A) \text{ and }xy=yx \text{ for all }y\in A\}$.

An algebra $A\not=0$ is called a \emph{division algebra} if for any
$a\in A$, $a\not=0$, the left multiplication  with $a$, $L_a(x)=ax$,
and the right multiplication with $a$, $R_a(x)=xa$, are bijective.
 If $A$ has finite dimension over $F$, $A$ is a division algebra if
and only if $A$ has no zero divisors \cite[ pp. 15, 16]{Sch}.
 A \emph{semifield} is a finite-dimensional unital division algebra over a finite field.
 A semifield is called \emph{proper} if it is not associative.
An element $0\not=a\in A$ has a \emph{left inverse} $a_l\in A$, if
$R_a(a_l)=a_l a=1$, and a \emph{right inverse}
 $a_r\in A$, if $L_a(a_r)=a a_r=1$. If  $m_r=m_l$ then we denote this element by
$m^{-1}$.

An automorphism $G\in {\rm Aut}_F(A)$ is an \emph{inner automorphism}
if there is an element $m\in A$ with left inverse $m_l$ such
that $G(x) =G_m(x)= (m_lx)m$ for all $x\in A$.
The set of inner automorphisms $\{G_m\,|\, m\in \mathbb{N}(A) \text{ invertible} \}$ is a subgroup of ${\rm Aut}_F(A)$.
Note that if the nucleus of $A$ is commutative, then for all $0\not=n\in
\mathbb{N}(A)$, $G_n(x)=(n^{-1}x)n=n^{-1}x n$ is an inner automorphism of $A$
such that ${G_n}|_{\mathbb{N}(A)}=id_{\mathbb{N}(A)}$.


\subsection{Semifields obtained from skew polynomial rings} \label{subsec:structure}


Let $K$ be a field and $\sigma$ an automorphism of $K$. The \emph{twisted polynomial ring} $R=K[t;\sigma]$
is the set of polynomials $a_0+a_1t+\dots +a_nt^n$ with $a_i\in K$,
where addition is defined term-wise and multiplication by
$ta=\sigma(a)t $ for all $a\in K$ \cite{O1}.
For $f=a_0+a_1t+\dots +a_mt^m$ with $a_m\not=0$ define ${\rm
deg}(f)=m$ and put ${\rm deg}(0)=-\infty$. Then ${\rm deg}(fg)={\rm deg}
(f)+{\rm deg}(g).$
 An element $f\in R$ is \emph{irreducible} in $R$ if it is not a unit and  it has no proper factors,
 i.e if there do not exist $g,h\in R$ with
 ${\rm deg}(g),{\rm deg} (h)<{\rm deg}(f)$ such
 that $f=gh$.

 $R=K[t;\sigma]$ is a left and right principal ideal domain
 and there is a right division algorithm in $R$: for all
$g,f\in R$, $g\not=0$, there exist unique $r,q\in R$ with ${\rm
deg}(r)<{\rm deg}(f)$, such that $g=qf+r$
\cite{J96}.
From now on, we assume that
$$K = \mathbb{F}_{q^n}$$
is a finite field,  $q = p^r$ for some prime $p$, $\sigma$ an automorphism of $K$ of order $n>1$ and
$$F={\rm Fix}(\sigma)=\mathbb{F}_{q},$$
 i.e. $K/F$ is a cyclic Galois extension of degree $n$
with ${\rm Gal}(K/F)=\langle\sigma\rangle.$  The norm  $N_{K / F} : K^{\times} \rightarrow F^{\times}$ is surjective, and
${\rm ker}(N_{K/F})$ is a cyclic group of order
$s =(q^n-1)/(q-1).$

Let $f\in R=K[t;\sigma]$ have degree $m$. Let
${\rm mod}_r f$ denote the remainder of right division by $f$. Then the additive abelian group
$R_m=\{g\in K[t;\sigma]\,|\, {\rm deg}(g)<m\}$
 together with the multiplication
 $g\circ h=gh \,\,{\rm mod}_r f $
 is a unital nonassociative algebra $S_f=(R_m,\circ)$ over $F$ \cite[(7)]{P66}.
  $S_f$ is also denoted by $R/Rf$
 if we want to make clear which ring $R$ is involved in the construction.

 Note that using left division by $f$ and
 the remainder ${\rm mod}_l f$ of left division by $f$ instead, we can analogously define the multiplication for
  another unital nonassociative algebra on $R_m$ over $F_0$, called $\,_fS$. Every algebra $\,_fS$ is the opposite algebra of some Petit algebra \cite[(1)]{P66}.

  In the following, we call the algebras $S_f$  \emph{Petit algebras} and denote their multiplication  by juxtaposition.
Without loss of generality, we will only  consider monic $f(t)$, since $S_f= S_{af}$ for all  $a\in K^\times$.
Let
$$f(t) = t^m -\sum_{i=0}^{m-1} a_i t^i \in K[t;\sigma].$$
 $S_f$ is a semifield if and only if $f$ is irreducible, and a proper semifield
 if and only if $f$ is not right invariant (i.e., the left ideal $Rf$ generated by $f$ is not two-sided),
 cf. \cite[(2), p.~13-03, (5), (9)]{P66}, or \cite{LS}.
 If $S_f$ is a proper semifield then $\mathbb{N}_l(S_f)=\mathbb{N}_m(S_f)=K\cdot 1= \mathbb{F}_{q^n}\cdot 1$ and
 $$\mathbb{N}_r(S_f)=\{g\in R\,|\, fg\in Rf\}\cong\mathbb{F}_{q^m}$$
 \cite{LS}. $S_f$ has order $q^{mn}$.
 The powers of $t$ are associative if and only if $t^mt=tt^m$
 if and only if $t\in \mathbb{N}_r(S_f)$ if and only if $ft\in Rf.$

 In particular, let $f(t) \in F[t]=F[t;\sigma]\subset K[t;\sigma]$ be monic, irreducible and not right invariant.
 Then
$$F[t]/(f(t))\cong F\oplus Ft\oplus\dots\oplus Ft^{m-1}\cong \mathbb{N}_r(S_f)$$
and thus $\mathbb{N}(S_f)=F\cdot 1.$
Moreover, we have $ft\in Rf$ \cite[Proposition 3]{BP}.

\begin{remark}
Note that  $f(t)\in K[t;\sigma]\setminus F[t;\sigma]$ is never right
invariant and that if $f(t)\in  F[t]\subset  K[t;\sigma]$ has degree $m<n$, then $f(t)$ is never right invariant, either.
 For $n=m$ the only rght invariant $f(t)\in  F[t]$ are of the form $f(t)=t^m-a$,
 and these polynomials are not irreducible. So for  $n=m$, all irreducible polynomials in $  F[t]$ are not right invariant.
\end{remark}

If the semifield $A=K[t;\sigma]/K[t;\sigma]f$ has a nucleus
which is larger than its center,
then the inner automorphisms $\{G_c\,|\, 0\not=c\in \mathbb{N}(A) \}$ form a non-trivial
subgroup of ${\rm Aut}_F(S_f)$ \cite[Lemma 2, Theorem 3]{W09} and each such inner automorphism
$G_c$ extends $id_{\mathbb{N}(A)}$.

We will assume throughout the paper that $f\in K[t;\sigma]$ is irreducible of degree $ m \geq
2$, since if $f$ has degree $1$ then  $S_f\cong K$, and that $\sigma\not=id$.

We will always choose irreducible polynomials $f\in K[t;\sigma]$  which are not right invariant, which is equivalent to
 $S_f$ being  a proper semifield.
 Each Jha-Johnson semifield is isotopic to some
Petit algebra $S_f$ \cite[Theorem 16]{LS} but not necessarily a Petit algebra itself. We will focus on
those Jha-Johnson semifields which are Petit algebras $S_f$, and apply the results from \cite{BP}.

%
%

\subsection{Automorphisms of Jha-Johnson semifields  $S_f$}\label{sec:semifields}

 Assume that
$$f(t) = t^m -\sum_{i=0}^{m-1} a_i t^i  \in K[t; \sigma]$$
 has degree $m$, is monic, irreducible and not right invariant. Then
 $S_f$ is a Jha-Johnson semifield over $F=\mathbb{F}_q$
\cite{LS}. We recall some results from \cite{BP} for the convenience of the reader:
Let $\tau=\sigma^j\in {\rm Gal}(K/F)$ for some $j$, $0\leq j\leq n-1$ and $k \in K^{\times}$, such that
\begin{equation} \label{eqn:neccessary}
\tau(a_i) = \Big( \prod_{l=i}^{m-1}\sigma^l(k) \Big) a_i
\end{equation}
for all $i \in\{ 0, \ldots, m-1\}$.
Then the map $H:S_f\longrightarrow S_f$,
$$H_{\tau , k}(\sum_{i=0}^{m-1} x_i t^i )= \tau(x_0) + \tau(x_1)kt + \tau(x_2)k\sigma(k)t^2+\cdots +
\tau(x_{m-1}) k \sigma(k) \cdots \sigma^{m-2}(k) t^{m-1}$$
is an automorphism of $S_f$. These $H_{\tau , k}$  form a subgroup of  ${\rm Aut}_F(S_f)$.
In particular, if  $n\geq m-1$ then
$${\rm Aut}_F(S_f)=\{H_{\tau , k}\,|\, \text{ with } \tau=\sigma^j, 0\leq j\leq n-1 \text{ and } k \in K^{\times}
\text{ satisfying Equation }(\ref{eqn:neccessary})\}$$
\cite[Theorems 4, 5]{BP}.

 An algebra $S_f$ with $f(t) = t^m - a\in K[t; \sigma]$,  $a \in K \setminus F$ and $n\geq m$
 is called a \emph{Sandler semifield} \cite{San62}. For $m=n$, these algebras are also called
\emph{nonassociative cyclic (division) algebras of degree $m$}, as they can be seen as canonical
generalizations of associative cyclic algebras (since for $a\in F^\times$, $S_f$
with $f(t) = t^m - a\in K[t; \sigma]$ is a classical cyclic algebra
of degree $m$ as defined in \cite[p. 414]{KMRT}). These algebras are treated in Section \ref{sec:nonasscyclicfinite}.

 The automorphism groups of Sandler semifields
 are particularly relevant:

\begin{theorem} \cite[Theorem 8]{BP}\label{Aut(S_f) subgroup corollary}
 Let $n\geq m-1$ and
 $g(t) = t^m - \sum_{i=0}^{m-1} b_i t^i \in K[t;\sigma]$
  be irreducible and not right invariant. Assume one of the following:
\\ (i)  $b_0 \in K \setminus F$ and $f(t) = t^m - b_0 \in K[t;\sigma]$.
\\ (ii)  $f(t) = t^m - \sum_{i=0}^{m-1} a_i t^i \in K[t;\sigma]$ be  
such that $b_i \in \left\{ 0 , a_i \right\}$ for all $i \in \{ 0,\ldots , m-1\}$.
\\ Then
${\rm Aut}_F(S_g)\subset {\rm Aut}_F(S_f)$  is a subgroup.
\end{theorem}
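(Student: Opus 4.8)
The plan is to reduce the statement to a comparison of the defining conditions~(\ref{eqn:neccessary}) for the two polynomials. Note first that the formula for $H_{\tau,k}$ involves only $\tau$, $k$, $\sigma$ and $m$, not the coefficients of the polynomial defining the algebra. Computing $H_{\tau,k}(t)^m$ inside $S_f$ and comparing it with $H_{\tau,k}(t^m)$ shows that $H_{\tau,k}$ is an $F$-algebra automorphism of $S_f$ if and only if the pair $(\tau,k)$ satisfies~(\ref{eqn:neccessary}) for $f$; crucially, this criterion uses only that $S_f$ is an algebra and not that $f$ is irreducible. Since $g$ is irreducible, not right invariant and $n\geq m-1$, every element of ${\rm Aut}_F(S_g)$ has the form $H_{\tau,k}$ with $(\tau,k)$ satisfying~(\ref{eqn:neccessary}) for $g$ \cite[Theorems 4, 5]{BP}. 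Proving ${\rm Aut}_F(S_g)\subseteq {\rm Aut}_F(S_f)$ therefore amounts to showing that every pair $(\tau,k)$ satisfying~(\ref{eqn:neccessary}) for $g$ also satisfies~(\ref{eqn:neccessary}) for $f$; the subgroup assertion is then automatic, since both sets lie in ${\rm GL}_F(R_m)$ and ${\rm Aut}_F(S_g)$ is already a group.

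I would settle case (i) first, as it is the model for the whole argument. Here $f(t)=t^m-b_0$, so~(\ref{eqn:neccessary}) for $f$ reduces to the single equation $\tau(b_0)=\big(\prod_{l=0}^{m-1}\sigma^l(k)\big)b_0$, the instances for $i\geq 1$ being vacuous because the corresponding coefficients of $f$ vanish. This one equation is precisely the $i=0$ instance of~(\ref{eqn:neccessary}) for $g$. Hence~(\ref{eqn:neccessary}) for $g$ forces~(\ref{eqn:neccessary}) for $f$, and the desired inclusion follows.

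For case (ii) I would carry out the same comparison index by index, using the hypothesis $b_i\in\{0,a_i\}$ to match the two systems. At every index $i$ with $b_i=a_i\neq 0$ the $i$-th equations of~(\ref{eqn:neccessary}) for $f$ and for $g$ are identical, so nothing needs to be checked there. The main obstacle is the set of indices at which $b_i=0$ while $a_i\neq 0$: at such an index the instance of~(\ref{eqn:neccessary}) for $f$ is nontrivial yet is not supplied directly by~(\ref{eqn:neccessary}) for $g$, and it is exactly here that the precise form of the coefficient hypothesis must be brought to bear. Once these indices are dealt with, the inclusion ${\rm Aut}_F(S_g)\subseteq{\rm Aut}_F(S_f)$ holds, and, as in case (i), the subgroup property is inherited from the ambient group ${\rm GL}_F(R_m)$.
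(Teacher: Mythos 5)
Your framework is the intended one (note the paper itself gives no proof of this statement, which is quoted verbatim from \cite[Theorem 8]{BP}; the argument there is precisely the comparison of the condition systems (\ref{eqn:neccessary}) that you set up), and your case (i) is complete and correct: since $b_0\in K\setminus F$ is nonzero, the single nontrivial equation of (\ref{eqn:neccessary}) for $f(t)=t^m-b_0$ is exactly the $i=0$ equation of the system for $g$, and the subgroup property is inherited because both groups consist of bijections of the common underlying space $R_m$ with composition as the operation. Your observation that the ``if'' direction of the criterion for $H_{\tau,k}$ does not use irreducibility of $f$ is also correct and is genuinely needed in (i), where $t^m-b_0$ is not assumed irreducible.

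Case (ii), however, you do not prove: at the indices with $b_i=0\neq a_i$ you remark that ``the precise form of the coefficient hypothesis must be brought to bear'' and then simply assert the inclusion. That deferred step is the entire content of case (ii), and it cannot be carried out, because with the hypothesis as printed ($b_i\in\{0,a_i\}$) the claimed inclusion is false: at such an index the system (\ref{eqn:neccessary}) for $f$ contains an equation not imposed by the system for $g$, so the containment runs the other way, namely $\{H_{\tau,k}\ \text{satisfying (\ref{eqn:neccessary}) for } f\}\subseteq {\rm Aut}_F(S_g)$. Concretely, take $n=m=2$, $f(t)=t^2-\mu t-\eta$ irreducible with $\mu\in F^{\times}$ and $\eta\in K\setminus F$ (a Hughes--Kleinfeld polynomial as in Section \ref{sec:knuth}; such $(\mu,\eta)$ exist, e.g.\ for $q=3$), and $g(t)=t^2-\eta$; then $b_1=0\in\{0,\mu\}$ and $b_0=\eta=a_0$, $g$ is irreducible and not right invariant, and ${\rm Aut}_F(S_g)$ contains the $q+1$ maps $H_{id,k}$ with $N_{K/F}(k)=1$ (Proposition \ref{cor:Sandler}), whereas ${\rm Aut}_F(S_f)=\{id\}$, since $\mu\sigma(k)=\tau(\mu)$ with $\tau=id$ forces $k=1$ (Theorem \ref{hkauto} and the corollary following it). The statement is only coherent --- and only then consistent with case (i) and with Proposition \ref{cor:Sandler}(ii), where the sparsest polynomial $t^m-a$ has the largest automorphism group --- if in (ii) the roles of the coefficients are exchanged, i.e.\ the hypothesis reads $a_i\in\{0,b_i\}$ for all $i$: then every equation of (\ref{eqn:neccessary}) for $f$ is either vacuous or literally one of the equations for $g$, case (i) becomes the special instance in which $f$ retains only the constant term, and your case (i) argument finishes the proof verbatim. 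A complete solution had either to give this index-by-index verification under the corrected hypothesis or to flag the misprint and exhibit the counterexample; as written, the main point remains unproven.
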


\begin{theorem} \cite[Theorem 9]{BP}
 Let $n < m-1$ and $g(t) = t^m - \sum_{i=0}^{m-1} b_i t^i \in K[t; \sigma]$ be irreducible and not
be right invariant. Assume one of the following:
\\ (i)  $f(t) = t^m - b_0 \in K[t; \sigma]$ with $b_0 \in K \setminus F$.
\\ (ii)  $f(t) = t^m - \sum_{i=0}^{m-1} a_i t^i \in K[t;\sigma]$ be 
such that $b_i \in \left\{ 0 , a_i \right\}$ for all $i \in \{ 0,\ldots , m-1\}$.
\\ Then  $$\{H\in{\rm Aut}_F(S_g)\,|\, H=H_{\tau , k}
\}\text{ is a subgroup of }\{H\in {\rm Aut}_F(S_f)\,|\, H=H_{\tau ,
k} \}.$$
\end{theorem}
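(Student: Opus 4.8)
The plan is to reduce the statement to the explicit parametrisation of the maps $H_{\tau,k}$ recalled before the theorem, and then to a purely coefficient-wise comparison of the relations $(\ref{eqn:neccessary})$ attached to $g$ and to $f$. The first point I would record is that the self-map $H_{\tau,k}$ of the common underlying set $R_m$ of $S_f$ and $S_g$ depends only on the pair $(\tau,k)$, not on the polynomial: it is the restriction to $R_m$ of the ring automorphism $\Phi$ of $R=K[t;\sigma]$ determined by $\Phi|_K=\tau$ and $\Phi(t)=kt$, since $\Phi(x_it^i)=\tau(x_i)\,k\sigma(k)\cdots\sigma^{i-1}(k)\,t^i$ matches the given formula. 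Such an $H_{\tau,k}$ lies in ${\rm Aut}_F(S_f)$ if and only if $(\ref{eqn:neccessary})$ holds for the coefficients of $f$: sufficiency is exactly the assertion recalled from \cite[Theorems 4, 5]{BP}, and necessity follows by expanding $H_{\tau,k}(t\cdot t^{m-1})=H_{\tau,k}(t)\,H_{\tau,k}(t^{m-1})$ in $S_f$, where $t\cdot t^{m-1}=t^m$ reduces to $\sum_i a_it^i$ and comparison of the coefficients of each $t^i$ returns precisely $(\ref{eqn:neccessary})$. Crucially this equivalence uses nothing about the size of $n$; the hypothesis $n<m-1$ only means that these maps need no longer exhaust ${\rm Aut}_F(S_f)$, which is why the statement speaks of the subsets $\{H\in{\rm Aut}_F(S_f)\mid H=H_{\tau,k}\}$ rather than of the full group.

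Consequently both sides of the asserted inclusion equal
\begin{equation*}
\{\,H_{\tau,k}\mid \tau=\sigma^j,\ k\in K^\times \text{ satisfies }(\ref{eqn:neccessary})\text{ for the relevant polynomial}\,\},
\end{equation*}
and it suffices to prove that every pair $(\tau,k)$ solving the system attached to $g$ also solves the one attached to $f$. Granting this, the subgroup claim is automatic: the two sets consist of the very same self-maps of $R_m$, each is already a subgroup of ${\rm Aut}_F(S_g)$, respectively ${\rm Aut}_F(S_f)$ (as recalled in the excerpt), and composition obeys $H_{\tau,k}\circ H_{\tau',k'}=H_{\tau\tau',\,k\tau(k')}$, a rule that does not involve the polynomial; so a subset closed under this law and sitting inside a group is a subgroup.

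The heart of the matter is therefore the index-by-index comparison of the two instances of $(\ref{eqn:neccessary})$, where I would use that the factor $\prod_{l=i}^{m-1}\sigma^l(k)$ attached to index $i$ depends only on $(\sigma,k,i)$, so that at each $i$ the relation is either vacuous (when the coefficient there is $0$) or the single equation $\tau(c_i)=\big(\prod_{l=i}^{m-1}\sigma^l(k)\big)c_i$ determined by the coefficient $c_i$. In case (i) the system for $f=t^m-b_0$ collapses to its $i=0$ relation alone, which, since $b_0\neq0$ by irreducibility of $g$, coincides with the $i=0$ relation of the system for $g$; thus the $g$-system implies the $f$-system. In case (ii) the hypothesis $b_i\in\{0,a_i\}$ links the two supports: wherever a relation is active for one polynomial and the coefficient of the other is nonzero, the two coefficients coincide, so the active equation at that index is literally identical for $f$ and for $g$, and the implication follows once one checks that every index carrying an active relation for the target polynomial also carries one for the source.

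The step I expect to require the most care is exactly this last comparison in case (ii): one must be certain that no index carries an active relation for the target that is not already present for the source, i.e.\ that the inclusion of supports runs in the direction making the implication valid; this is precisely the bookkeeping encoded in the hypothesis of Theorem \ref{Aut(S_f) subgroup corollary}. The present argument is the transcription of that proof to the $H_{\tau,k}$-parts, the only genuinely new ingredient being the remark of the first paragraph that the characterisation of these maps by $(\ref{eqn:neccessary})$ is valid without the restriction $n\ge m-1$.
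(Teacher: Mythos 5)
Your overall route is the right one---the paper itself gives no proof of this statement (it is quoted from \cite[Theorem 9]{BP}), and the argument there is exactly the reduction you set up. Your necessity computation is correct and, as you note, independent of the size of $n$: expanding $H_{\tau,k}(t\cdot t^{m-1})=H_{\tau,k}(t)\,H_{\tau,k}(t^{m-1})$ shows that a map of the shape $H_{\tau,k}$ is multiplicative on $S_f$ only if $(\tau,k)$ solves (\ref{eqn:neccessary}) for $f$, so both sets in the conclusion are precisely the solution sets of the respective coefficient systems, and with $H_{\tau,k}\circ H_{\tau',k'}=H_{\tau\tau',\,k\tau(k')}$ the subgroup claim does reduce to the coefficientwise implication. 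Your case (i) is complete: the $f$-system collapses to the single $i=0$ equation, which is literally the $i=0$ equation of the $g$-system since $b_0\neq 0$.

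In case (ii), however, the check you yourself flag as the crux is never carried out, and under the hypothesis as printed it fails. From $b_i\in\{0,a_i\}$ one gets $b_i=0$ whenever $a_i=0$, so $\mathrm{supp}(g)\subseteq\mathrm{supp}(f)$ with agreement of coefficients on $\mathrm{supp}(g)$: the $g$-system is a \emph{subsystem} of the $f$-system, hence it is every solution for $f$ that solves the system for $g$, which yields the reverse containment $\{H\in{\rm Aut}_F(S_f)\mid H=H_{\tau,k}\}\subseteq\{H\in{\rm Aut}_F(S_g)\mid H=H_{\tau,k}\}$, not the asserted one. Concretely, at any index $i_0$ with $a_{i_0}\neq 0=b_{i_0}$ the target system imposes the extra equation $\tau(a_{i_0})=\bigl(\prod_{l=i_0}^{m-1}\sigma^l(k)\bigr)a_{i_0}$, which a solution of the $g$-system has no reason to satisfy: take $\tau=\mathrm{id}$ and $k\neq 1$ with $\prod_{l=i}^{m-1}\sigma^l(k)=1$ for all $i\in\mathrm{supp}(g)$ but $\prod_{l=i_0}^{m-1}\sigma^l(k)\neq 1$. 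The statement is true---and your argument closes immediately---under the reading $a_i\in\{0,b_i\}$, i.e.\ $f$ is obtained from $g$ by deleting terms; that reading makes (i) the special case of (ii) and is the version actually used throughout the paper (e.g.\ Proposition \ref{cor:Sandler}(ii) and Theorem \ref{Aut(S_f) subgroup corollary}, where $f(t)=t^m-a$ and $g$ runs over polynomials with $a_0=a$). So the gap in your proposal is genuine: your appeal to ``the bookkeeping encoded in the hypothesis'' points the wrong way for the condition as printed, and you must either verify the support inclusion under the corrected hypothesis $a_i\in\{0,b_i\}$ (where every active $f$-equation occurs verbatim among the $g$-equations) or observe that the literal condition produces the opposite containment.
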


 \begin{theorem} ( \cite[Theorem 5, Remark 12, Theorem 11]{BP})\label{thm:automorphism_of_Sf_field_caseIV}
Suppose $a_{m-1}\in F^\times$, or that two consecutive coefficients $a_s$ and $a_{s+1}$  lie in $F^\times$.
\\ (i) For  $n\geq m-1$ we distinguish two cases:
\\  If $a_i\not\in {\rm Fix}(\tau)$ for all $\tau\not=id$ and all non-zero
$a_i$, $i\not=m-1$,  then
${\rm Aut}_F(S_f)=\{id\}$.
\\  If $f(t)\in F[t]\subset K[t;\sigma]$ then any automorphism $H$ of $S_f$ has the form $H_{\tau , 1}$
where  $\tau\in {\rm Gal}(K/F)$,
and
$${\rm Aut}_F(S_f)\cong  \mathbb{Z}/n\mathbb{Z}.$$
 (ii) Let  $n < m-1$. If $f(t)\in F[t] \subset K[t;\sigma]$ is not right invariant, then for all $\tau\in {\rm Gal}(K/F)$,
  the maps  $H_{\tau , 1}$  are automorphisms of $S_f$ and
 $ \mathbb{Z}/n\mathbb{Z}$ is isomorphic to a subgroup of  ${\rm Aut}_F(S_f)$.
\end{theorem}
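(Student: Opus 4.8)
The plan is to reduce the entire statement to the explicit description of ${\rm Aut}_F(S_f)$ by pairs $(\tau,k)$ recalled just above the theorem. For $n\geq m-1$ every automorphism is some $H_{\tau,k}$ with $\tau=\sigma^j\in{\rm Gal}(K/F)$ and $k\in K^\times$ satisfying Equation (\ref{eqn:neccessary}), whereas for arbitrary $n$ each such $H_{\tau,k}$ is automatically an automorphism as soon as (\ref{eqn:neccessary}) holds. Thus the whole theorem amounts to deciding which pairs $(\tau,k)$ solve (\ref{eqn:neccessary}) under the standing hypothesis on the coefficients.

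First I would show that the standing hypothesis forces $k=1$. If $a_{m-1}\in F^\times$, the $i=m-1$ instance of (\ref{eqn:neccessary}) reads $\tau(a_{m-1})=\sigma^{m-1}(k)\,a_{m-1}$; since $\tau$ fixes $F$ this gives $\sigma^{m-1}(k)=1$, and as $\sigma^{m-1}$ is a bijection fixing $1$ we get $k=1$. If instead $a_s,a_{s+1}\in F^\times$ are consecutive, the instances $i=s$ and $i=s+1$ yield $\prod_{l=s}^{m-1}\sigma^l(k)=1$ and $\prod_{l=s+1}^{m-1}\sigma^l(k)=1$; their quotient isolates $\sigma^s(k)=1$, so again $k=1$. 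This deduction is the one genuinely computational step, and the consecutive-coefficient case (keeping track of indices in the telescoping quotient) is where most care is needed.

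Once $k=1$, the product $\prod_{l=i}^{m-1}\sigma^l(k)$ collapses to $1$ and (\ref{eqn:neccessary}) reduces to $\tau(a_i)=a_i$ for every $i$. For the first case of (i), irreducibility of $f$ forces $a_0\neq 0$, and $0\neq m-1$ since $m\geq 2$; hence $\tau(a_0)=a_0$, i.e.\ $a_0\in{\rm Fix}(\tau)$. The hypothesis that $a_i\notin{\rm Fix}(\tau)$ for every $\tau\neq id$ and every nonzero $a_i$ with $i\neq m-1$ then forces $\tau=id$, so $H_{\tau,k}=H_{id,1}=id$ and ${\rm Aut}_F(S_f)=\{id\}$. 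For the second case of (i), $f\in F[t]$ makes every $a_i$ lie in $F$, so $\tau(a_i)=a_i$ holds for all $\tau\in{\rm Gal}(K/F)$; combined with $k=1$ this shows ${\rm Aut}_F(S_f)=\{H_{\tau,1}\mid\tau\in{\rm Gal}(K/F)\}$.

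It remains to identify this group and to treat (ii). Since $H_{\tau,1}$ sends $\sum_i x_it^i\mapsto\sum_i\tau(x_i)t^i$, a direct computation gives $H_{\tau_1,1}\circ H_{\tau_2,1}=H_{\tau_1\tau_2,1}$, so $\tau\mapsto H_{\tau,1}$ is a group homomorphism, and it is injective because distinct elements of ${\rm Gal}(K/F)$ already differ on some constant term. Hence it is an isomorphism onto ${\rm Aut}_F(S_f)$, and as ${\rm Gal}(K/F)=\langle\sigma\rangle\cong\mathbb{Z}/n\mathbb{Z}$ we obtain ${\rm Aut}_F(S_f)\cong\mathbb{Z}/n\mathbb{Z}$, proving (i). For (ii) the same computation applies without the bound $n\geq m-1$: when $f\in F[t]$ and $k=1$, each $H_{\tau,1}$ satisfies (\ref{eqn:neccessary}) and is therefore an automorphism, and $\tau\mapsto H_{\tau,1}$ embeds $\mathbb{Z}/n\mathbb{Z}$ as a subgroup of ${\rm Aut}_F(S_f)$, as required.
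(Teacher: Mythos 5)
Your proof is correct and follows essentially the same route as the paper's: the paper offers no independent argument for this theorem (it is imported from \cite[Theorem 5, Remark 12, Theorem 11]{BP}), and your derivation --- forcing $k=1$ from the $i=m-1$ instance of Equation (\ref{eqn:neccessary}) (resp.\ the telescoping quotient of the $i=s$ and $i=s+1$ instances), so that the condition collapses to $\tau(a_i)=a_i$, with $a_0\neq 0$ by irreducibility handling the trivial-automorphism case --- is precisely the intended application of the $H_{\tau,k}$-classification recalled from \cite[Theorems 4, 5]{BP}. No gaps: your use of that classification correctly distinguishes $n\geq m-1$ (where it exhausts ${\rm Aut}_F(S_f)$) from general $n$ (where it only produces automorphisms, giving the subgroup statement in (ii)).
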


\begin{theorem} (\cite[Theorem 19]{BP})\label{thm:fixedfieldfinite}
Let $f(t)  \in F[t]\subset K[t;\sigma]$.
\\ (i) $\langle H_{\sigma,1}\rangle$ is a cyclic subgroup of ${\rm Aut}_F(S_f)$ of order $n$.
\\ (ii) Suppose one of the following holds:
\\ (a) $n\geq m-1$ and  $a_{m-1} \in F^\times$.
\\ (b) $n=m$ is prime, $a_0 \neq 0$
 and at least one of
$a_1, \ldots, a_{m-1}$ is non-zero.
\\ Then
${\rm Aut}_F(S_f) = \langle H_{\sigma,1}\rangle\cong \mathbb{Z}/n \mathbb{Z}$
 and any automorphism extends exactly one $\tau\in {\rm Gal}(K/F)$.
\end{theorem}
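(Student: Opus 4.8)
The plan is to reduce the whole statement to the structural description of ${\rm Aut}_F(S_f)$ recalled just before it. Since $f\in F[t]$ is irreducible and, by the Remark, not right invariant (so that $S_f$ is a proper semifield), and since under either hypothesis (a) or (b) we have $n\geq m-1$, the theorem \cite[Theorems 4, 5]{BP} applies: every element of ${\rm Aut}_F(S_f)$ is some $H_{\tau,k}$ with $\tau=\sigma^j$, $0\le j\le n-1$, and $k\in K^\times$ satisfying Equation (\ref{eqn:neccessary}). Everything then splits into two tasks: producing the cyclic group of order $n$ for part (i), and showing that hypothesis (a) or (b) forces $k=1$, so that no automorphisms other than the powers of $H_{\sigma,1}$ survive.

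For part (i), I would first check that $\tau=\sigma$, $k=1$ satisfies Equation (\ref{eqn:neccessary}): the right-hand side collapses to $a_i$ because every factor $\sigma^l(1)=1$, and $\sigma(a_i)=a_i$ as $a_i\in F$. Hence $H_{\sigma,1}$ is an automorphism, acting coefficientwise as $\sum x_i t^i\mapsto\sum\sigma(x_i)t^i$. From this coefficientwise action one gets $H_{\sigma,1}^{\,j}=H_{\sigma^{j},1}$ for all $j$, which is the identity exactly when $\sigma^j={\rm id}$, i.e. when $n\mid j$; so $\langle H_{\sigma,1}\rangle$ is cyclic of order $n$. Part (ii)(a) is then quick: evaluating Equation (\ref{eqn:neccessary}) at $i=m-1$ and using $\tau(a_{m-1})=a_{m-1}$ (as $a_{m-1}\in F^\times$) gives $a_{m-1}=\sigma^{m-1}(k)a_{m-1}$, so $\sigma^{m-1}(k)=1$ and therefore $k=1$. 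Every automorphism is thus $H_{\sigma^j,1}\in\langle H_{\sigma,1}\rangle$, whence ${\rm Aut}_F(S_f)=\langle H_{\sigma,1}\rangle\cong\mathbb{Z}/n\mathbb{Z}$.

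I expect the main obstacle to be part (ii)(b), where no single coefficient condition eliminates $k$ at once and the primality of $n=m$ must be exploited. Writing $P_i=\prod_{l=i}^{m-1}\sigma^l(k)$, each nonzero $a_i$ (being fixed by $\tau$) forces $P_i=1$ via Equation (\ref{eqn:neccessary}). From $a_0\ne0$ we obtain $P_0=N_{K/F}(k)=1$, and from a chosen index $i_0\in\{1,\dots,m-1\}$ with $a_{i_0}\ne0$ we obtain $P_{i_0}=1$; dividing shows the truncated product $Q=\prod_{l=0}^{i_0-1}\sigma^l(k)=P_0/P_{i_0}$ equals $1$. Applying $\sigma$ to the relation $Q=1$ and dividing by $Q$ yields $\sigma^{i_0}(k)=k$. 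Because $n=m$ is prime and $0<i_0<n$, the power $\sigma^{i_0}$ still generates ${\rm Gal}(K/F)$, so ${\rm Fix}(\sigma^{i_0})={\rm Fix}(\sigma)=F$ and hence $k\in F$. Finally, for $k\in F^\times$ the two relations become $k^m=1$ and $k^{\,m-i_0}=1$, whence $k^{\gcd(m,\,i_0)}=k=1$, again using that $m$ is prime with $0<i_0<m$. Thus $k=1$ and the conclusion of (a) follows verbatim.

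The closing assertion—that each automorphism extends exactly one $\tau\in{\rm Gal}(K/F)$—then falls out: once $k=1$, restricting $H_{\sigma^j,1}$ to the constants $K\cdot1$ recovers $\sigma^j$, so $H_{\sigma^j,1}$ extends $\sigma^j$, and the $n$ powers of $H_{\sigma,1}$ biject with the $n$ elements of ${\rm Gal}(K/F)$. The only points I would take care to verify are that the Remark really guarantees non-right-invariance in each relevant degree (in particular the borderline case $m=n+1$ in (a), where irreducibility rules out the one potentially right-invariant shape), and that the composition identity $H_{\tau,1}\circ H_{\tau',1}=H_{\tau\tau',1}$ used in part (i) holds as claimed.
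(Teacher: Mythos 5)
Your proof is correct and follows exactly the route the paper intends: the paper quotes this result from \cite[Theorem 19]{BP} without reproducing a proof, and your argument reconstructs it from the machinery recalled in Subsection \ref{sec:semifields} --- namely that for $n\geq m-1$ every automorphism is some $H_{\tau,k}$ subject to Equation (\ref{eqn:neccessary}), from which $k=1$ is forced coefficientwise. In particular your treatment of case (ii)(b) --- dividing the product relations from $a_0\neq 0$ and $a_{i_0}\neq 0$ to get $\sigma^{i_0}(k)=k$, then using primality of $n=m$ twice (once to conclude $k\in F$, once via $\gcd(m,i_0)=1$ to conclude $k=1$) --- is sound, and the non-right-invariance you pause over is simply a standing hypothesis of the paper, so no extra verification is needed.
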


\begin{proposition}  \cite[Corollaries 13, 14]{BP} \label{cor:Sandler}
 Let   $f(t)=t^m-a\in K[t;\sigma]$, $a\in K\setminus F$ and $\tau\in {\rm Gal}(K/F)$.
\\ (i) For all  $k \in K^{\times}$ with
$$ \tau(a) = \Big( \prod_{l=0}^{m-1}\sigma^l(k) \Big) a,$$
 the maps  $H_{\tau , k}$ are automorphisms of $S_f$. In particular,
 $N_{K/F}(k)$ is an $m$th root of unity. If  $n\geq m-1$ these are all automorphisms of $S_f$.
\\ (ii) For all $g(t) = t^m - \sum_{i=0}^{m-1} a_i t^i \in K[t; \sigma]$ with $a_0=a$,
$$\{H\in {\rm Aut}_F(S_g)\,|\, H=H_{\tau , k} \}\text{ is a subgroup
of }\{H\in {\rm Aut}_F(S_f)\,|\, H=H_{\tau , k} \}.$$
If  $n\geq m-1$ then these groups are the automorphism groups of $S_g$ and
$S_f$, hence in that case ${\rm Aut}_F(S_g)$ is a subgroup of ${\rm Aut}_F(S_f)$.
\end{proposition}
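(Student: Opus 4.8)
The plan is to specialize the general criterion (\ref{eqn:neccessary}) for a map $H_{\tau,k}$ to be an automorphism, recalled in Section~\ref{sec:semifields}, to the Sandler polynomial $f(t)=t^m-a$, and then to read part (ii) off the same criterion. First I would observe that for $f(t)=t^m-a$ we have $a_0=a$ and $a_1=\cdots=a_{m-1}=0$, so for indices $i\ge 1$ condition (\ref{eqn:neccessary}) reduces to the trivial identity $0=0$ and is automatic. The sole requirement is therefore the $i=0$ case, namely $\tau(a)=\big(\prod_{l=0}^{m-1}\sigma^l(k)\big)a$. By the result from \cite{BP} cited in Section~\ref{sec:semifields}, every such $H_{\tau,k}$ is then an automorphism of $S_f$, and if $n\ge m-1$ these exhaust ${\rm Aut}_F(S_f)$; this gives the first and last assertions of (i).

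To show that $N_{K/F}(k)$ is an $m$th root of unity, I would write $P=\prod_{l=0}^{m-1}\sigma^l(k)$, so the condition reads $P=\tau(a)a^{-1}$. Applying the norm $N_{K/F}=\prod_{r=0}^{n-1}\sigma^r$ and using that norms are fixed by ${\rm Gal}(K/F)$ gives $N_{K/F}(P)=N_{K/F}(\tau(a))N_{K/F}(a)^{-1}=1$. On the other hand, expanding $N_{K/F}(P)=\prod_{l=0}^{m-1}\prod_{r=0}^{n-1}\sigma^{r+l}(k)$ and noting that for each fixed $l$ the exponents $r+l$ run over a complete residue system modulo $n$, each inner product equals $N_{K/F}(k)$, whence $N_{K/F}(P)=N_{K/F}(k)^m$. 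Comparing the two evaluations yields $N_{K/F}(k)^m=1$.

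For part (ii), the key point is that membership of $H_{\tau,k}$ in ${\rm Aut}_F(S_g)$ forces membership in ${\rm Aut}_F(S_f)$. If $H_{\tau,k}$ is an automorphism of $S_g$, then $\tau,k$ satisfy (\ref{eqn:neccessary}) for $g$; in particular the $i=0$ case reads $\tau(a_0)=\big(\prod_{l=0}^{m-1}\sigma^l(k)\big)a_0$, and since $a_0=a$ this is precisely the condition from (i) making $H_{\tau,k}$ an automorphism of $S_f$. Hence $\{H\in{\rm Aut}_F(S_g)\,|\,H=H_{\tau,k}\}\subseteq\{H\in{\rm Aut}_F(S_f)\,|\,H=H_{\tau,k}\}$ as sets. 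Because the composition law of the maps $H_{\tau,k}$ on $R_m$ depends only on $\tau,k,\sigma$ and not on the chosen polynomial, and because each of these sets is a subgroup of the respective automorphism group (as recalled in Section~\ref{sec:semifields}), the set inclusion is an inclusion of subgroups. When $n\ge m-1$ the cited theorem identifies each set with the full group ${\rm Aut}_F(S_g)$, resp.\ ${\rm Aut}_F(S_f)$, giving the final claim.

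I expect the only genuinely computational step to be the norm identity in (i); the main subtlety is the reindexing of $\sigma^{r+l}$ over a complete residue system modulo $n$, which is what makes the $m$-fold and $n$-fold products collapse to $N_{K/F}(k)^m$. Everything else is a direct specialization of the general automorphism criterion and requires no further work.
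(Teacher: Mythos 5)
Your proposal is correct and follows essentially the intended route: the paper gives no proof of Proposition \ref{cor:Sandler} (it is quoted from \cite[Corollaries 13, 14]{BP}), and that result is obtained exactly as you do, by specializing the criterion (\ref{eqn:neccessary}) to $f(t)=t^m-a$, where only the $i=0$ condition survives, together with the norm computation $N_{K/F}(k)^m=N_{K/F}\big(\tau(a)\big)N_{K/F}(a)^{-1}=1$. The one step you use implicitly --- that a map of the form $H_{\tau,k}$ is an automorphism of $S_g$ \emph{only if} (\ref{eqn:neccessary}) holds for the coefficients of $g$, which you need for part (ii) in the range $n<m-1$ --- is likewise part of the cited result in \cite{BP} and follows from comparing $H_{\tau,k}(t)^m$ with $H_{\tau,k}(t^m)$ in $S_g$, so your argument is complete.
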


\begin{corollary} \label{cor:SandlerII}
Let  $n \geq m-1$ and  $f(t)=t^m-a\in K[t;\sigma]$ with $a\in K\setminus F$. Let $m $ and $(q-1)$ be coprime.
\\ (i) There are at most $s=(q^n-1)/(q-1)$ automorphisms extending each
$\tau=\sigma^j$.
\\ (ii) For all irreducible $g(t) = t^m - \sum_{i=0}^{m-1} a_i t^i \in K[t; \sigma]$ with $a_0=a$, ${\rm Aut}_F(S_g)$ is a subgroup of
${\rm Aut}_F(S_f)$.
\end{corollary}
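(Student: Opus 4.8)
The plan is to prove Corollary~\ref{cor:SandlerII} by leveraging Proposition~\ref{cor:Sandler} together with the constraint imposed by the coprimality of $m$ and $q-1$. For part (i), I would start from Proposition~\ref{cor:Sandler}(i), which (for $n\geq m-1$) tells us that every automorphism of $S_f$ with $f(t)=t^m-a$ has the form $H_{\tau,k}$, where $\tau=\sigma^j$ is fixed and $k\in K^\times$ satisfies $\tau(a)=\bigl(\prod_{l=0}^{m-1}\sigma^l(k)\bigr)a=N_{K/F}(k)\,a$, and moreover $N_{K/F}(k)$ is an $m$th root of unity. The key observation is that $N_{K/F}(k)\in F^\times=\mathbb{F}_q^\times$, a cyclic group of order $q-1$, so $N_{K/F}(k)$ is simultaneously an $m$th root of unity and an element of a group of order $q-1$. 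Since $\gcd(m,q-1)=1$, the only common solution is $N_{K/F}(k)=1$, forcing $\tau(a)=a$ and $k\in\ker(N_{K/F})$.

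From here the counting for part (i) is immediate. For a \emph{fixed} $\tau$ for which an automorphism exists, the admissible $k$ are exactly those in $\ker(N_{K/F})$ (together with the compatibility $\tau(a)=a$, which is now automatic once a solution exists). Distinct elements $k$ give distinct maps $H_{\tau,k}$ since $H_{\tau,k}(t)=kt$ recovers $k$. Because $\ker(N_{K/F})$ is cyclic of order $s=(q^n-1)/(q-1)$, there are at most $s$ automorphisms extending each $\tau=\sigma^j$, which is exactly the asserted bound.

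For part (ii), I would invoke Proposition~\ref{cor:Sandler}(ii) directly: for any $g(t)=t^m-\sum_{i=0}^{m-1}a_it^i$ with $a_0=a$, the set $\{H\in{\rm Aut}_F(S_g)\mid H=H_{\tau,k}\}$ is a subgroup of the corresponding set for $S_f$, and in the range $n\geq m-1$ the defining result (the displayed formula for ${\rm Aut}_F(S_f)$ recalled before Theorem~\ref{Aut(S_f) subgroup corollary}, cf.\ \cite[Theorems 4,5]{BP}) guarantees that \emph{every} automorphism of an algebra $S_h$ with $h$ monic, irreducible and not right invariant is of the form $H_{\tau,k}$. Thus both sets coincide with the full automorphism groups, and the subgroup relation ${\rm Aut}_F(S_g)\subset{\rm Aut}_F(S_f)$ follows. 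Note that irreducibility of $g$ is exactly what is needed to ensure $S_g$ is a semifield so that ${\rm Aut}_F(S_g)$ is meaningful; the hypothesis $a_0=a\in K\setminus F$ further guarantees $g$ is not right invariant, so the characterization of automorphisms applies.

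The main conceptual obstacle is part (i): the reduction that forces $N_{K/F}(k)=1$. The subtlety is to argue cleanly that an $m$th root of unity lying in $\mathbb{F}_q^\times$ must be trivial under the coprimality hypothesis. I would phrase this as follows: $N_{K/F}(k)\in F^\times$ has order dividing $q-1$, while being an $m$th root of unity means its order divides $m$; hence its order divides $\gcd(m,q-1)=1$, so $N_{K/F}(k)=1$. Everything after this is routine bookkeeping with the cyclic structure of $\ker(N_{K/F})$ and the injectivity of $k\mapsto H_{\tau,k}$, so no further difficulty is expected.
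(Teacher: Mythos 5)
Your overall route is the paper's own: part (i) rests on Proposition~\ref{cor:Sandler}(i) plus the observation that $N_{K/F}(k)\in F^\times$ is an $m$th root of unity in a group of order $q-1$, so coprimality forces $N_{K/F}(k)=1$ and the admissible $k$ lie in $\ker(N_{K/F})$, which has $s$ elements; part (ii) is Proposition~\ref{cor:Sandler}(ii), which the paper dismisses as obvious. However, one step in your write-up is wrong as stated: you identify $\prod_{l=0}^{m-1}\sigma^l(k)$ with $N_{K/F}(k)$. That identity holds only when $m=n$, whereas the corollary assumes only $n\geq m-1$, so $n$ may equal $m-1$ or strictly exceed $m$ (for instance, when $n=m-1$ the product equals $N_{K/F}(k)\,k$, since $\sigma^n(k)=k$). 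Consequently your derived claims that $\tau(a)=a$ and that the admissible $k$ are \emph{exactly} the elements of $\ker(N_{K/F})$ are false in general; they are correct in the nonassociative cyclic case $m=n$ treated in Section~\ref{sec:nonasscyclicfinite}, but not here. The correct deduction --- which Proposition~\ref{cor:Sandler}(i) already records, and which you also quote --- comes from applying $N_{K/F}$ to both sides of $\tau(a)=\bigl(\prod_{l=0}^{m-1}\sigma^l(k)\bigr)a$: this gives $N_{K/F}(k)^m=1$, whence $N_{K/F}(k)=1$ by your (correct) order argument.

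The slip does not sink the proof, because the corollary only asserts an upper bound: you need the \emph{containment} of the admissible $k$ in $\ker(N_{K/F})$, not equality, and that containment follows from $N_{K/F}(k)=1$ alone. Combined with the injectivity of $k\mapsto H_{\tau,k}$ for fixed $\tau$ (via $H_{\tau,k}(t)=kt$, as you note), this yields at most $s=(q^n-1)/(q-1)$ automorphisms extending each $\sigma^j$, exactly as in the paper. Your part (ii) is correct and matches the paper's one-line argument, with the useful added checks that irreducibility of $g$ makes $S_g$ a semifield and that $a_0=a\in K\setminus F$ rules out right invariance. In summary: same approach as the paper, but you should delete the identification $\prod_{l=0}^{m-1}\sigma^l(k)=N_{K/F}(k)$ together with the claims ``$\tau(a)=a$'' and ``exactly those in $\ker(N_{K/F})$,'' replacing them with the norm-of-both-sides argument.
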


\begin{proof} (i)
We know that $H\in {\rm Aut}_F(S_f)$ if and only if $H=H_{\sigma^j , k}$
 where  $j \in \left\{ 0, \ldots, n-1 \right\}$ and $k \in K^{\times}$ is such that
 $$ \sigma^j(a) = \Big(\prod_{l=0}^{m-1}\sigma^l(k) \Big) a $$
by Proposition \ref{cor:Sandler}. In particular,
$N_{K/F}(k)=1$. So there are at most $s=(q^n-1)/(q-1)$ automorphisms extending each $\sigma^j$.
\\ (ii) is obvious.
\end{proof}

%
%

\section{Inner automorphisms} \label{sec:inner}

Let $f\in K[t; \sigma]$ have degree $m$, and be monic, irreducible and not right invariant.
 \cite[Corollary 5]{W09} yields immediately:

\begin{proposition} \label{prop:innerII}
Suppose $N=\mathbb{N}(S_f)=\mathbb{F}_{q^l}\cdot 1$ for some integer $1<l\leq n$. Then  $S_f$ has at least
$$(q^l-1)/(q-1)$$
inner automorphisms,  determined by those $q^l$
elements in its nucleus that do not lie in $F$. They all are extensions of $id_N$.
\\ In particular, if $S_f$ has nucleus $K\cdot 1$ then there are
$s=(q^n-1)/(q-1)$
  inner automorphisms of $S_f$ and all
extend $id_K$; thus all have the form $H_{id,k}$ for a suitable $k\in K^\times$.
\end{proposition}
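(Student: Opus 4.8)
The plan is to make the conjugation maps coming from the nucleus completely explicit; this one computation delivers the homomorphism structure, the count, and the $H_{id,k}$-form all at once. Since $N=\mathbb{N}(S_f)=\mathbb{F}_{q^l}\cdot 1$ is a commutative field, the observations in Section~\ref{subsec:nonassalgs} show that for every $0\neq c\in N$ the map $G_c(x)=(c^{-1}x)c$ is an inner automorphism of $S_f$ restricting to $id_N$, and that all the $G_c$ together form a subgroup of ${\rm Aut}_F(S_f)$. First I would compute $G_c$ on the basis $1,t,\dots,t^{m-1}$: since right multiplication gives $t^i c=\sigma^i(c)t^i$ and all these monomials have degree $<m$ (so no reduction modulo $f$ intervenes), one finds
\[
G_c\Big(\sum_{i=0}^{m-1}x_it^i\Big)=\sum_{i=0}^{m-1}x_i\,c^{-1}\sigma^i(c)\,t^i .
\]
Thus $G_c$ acts diagonally on the monomials, scaling $t^i$ by the factor $c^{-1}\sigma^i(c)$.

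From this formula everything follows quickly. Writing $k=c^{-1}\sigma(c)$, the telescoping identity $\prod_{l=0}^{i-1}\sigma^l(k)=\prod_{l=0}^{i-1}\sigma^{l+1}(c)\,\sigma^l(c)^{-1}=c^{-1}\sigma^i(c)$ shows $G_c=H_{id,k}$; in particular $c\mapsto G_c$ is a group homomorphism $\Phi\colon N^{\times}\to{\rm Aut}_F(S_f)$ whose image is exactly the set of inner automorphisms arising from the nucleus. The key point is then to determine $\ker\Phi$. By the diagonal formula $G_c=id$ if and only if $c^{-1}\sigma^i(c)=1$ for all $0\leq i\leq m-1$; taking $i=1$ (available since $m\geq 2$) together with $\sigma\neq id$ forces $\sigma(c)=c$, i.e. $c\in F$, and conversely every $c\in F^{\times}$ gives $G_c=id$. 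Hence $\ker\Phi=F^{\times}$. Obtaining this explicit diagonal action of $G_c$ is the one computation the whole argument rests on --- rather than any separate determination of the center --- everything else being formal.

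It follows that the image of $\Phi$ is isomorphic to $N^{\times}/F^{\times}=\mathbb{F}_{q^l}^{\times}/\mathbb{F}_q^{\times}$, a group of order $(q^l-1)/(q-1)$; thus $S_f$ has at least $(q^l-1)/(q-1)$ inner automorphisms, each fixing $N$ pointwise and so extending $id_N$. For the final statement I specialise to $l=n$, where $N=K\cdot 1$: the count becomes $s=(q^n-1)/(q-1)$, and by the identification $G_c=H_{id,k}$ with $k=c^{-1}\sigma(c)$ obtained above, every such inner automorphism has the form $H_{id,k}$ for a suitable $k\in K^{\times}$, as asserted.
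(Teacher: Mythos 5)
Your proof is correct, but it follows a genuinely different route from the paper's, which contains no argument of its own: the paper obtains Proposition \ref{prop:innerII} as an immediate consequence of \cite[Corollary 5]{W09}, resting on the facts recalled in Section \ref{subsec:nonassalgs} (for a commutative nucleus each $G_c$, $0\neq c\in\mathbb{N}(S_f)$, is an inner automorphism restricting to $id_N$, and these maps form a subgroup of ${\rm Aut}_F(S_f)$). You replace the citation by a self-contained computation: the diagonal action $G_c\big(\sum_{i=0}^{m-1}x_it^i\big)=\sum_{i=0}^{m-1}x_i\,c^{-1}\sigma^i(c)\,t^i$ --- legitimate because every product occurring has degree $<m$, so no reduction modulo $f$ intervenes --- then the telescoping identity giving $G_c=H_{id,k}$ with $k=c^{-1}\sigma(c)$, and finally $\ker\Phi=F^{\times}$, so the image of $\Phi$ is $N^{\times}/F^{\times}$, of order $(q^l-1)/(q-1)$. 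Your route buys several things the bare citation does not: it proves the final $H_{id,k}$-clause directly, whereas the paper only extracts this correspondence later (in the proof of Proposition \ref{prop:estimate}, by appeal to the proof of \cite[Theorem 16]{BP}); it makes visible that $N_{K/F}(k)=1$, i.e.\ the Hilbert-90 side of that correspondence; and it shows that these inner automorphisms form a group isomorphic to $N^{\times}/F^{\times}$ and in fact fix all of $K\cdot 1$ pointwise (stronger than the stated extension of $id_N$), anticipating Theorem \ref{cor:innerII}. What the paper's citation buys is brevity and the transfer of a general semifield fact without recomputation.

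Two tacit points in your write-up deserve a line each. First, your use of $t^ic=\sigma^i(c)t^i$ presupposes $\mathbb{N}(S_f)\subseteq K\cdot 1$, so that $c$ may be treated as a scalar in $K$; this holds here because $f$ is irreducible and not right invariant, whence $\mathbb{N}_l(S_f)=\mathbb{N}_m(S_f)=K\cdot 1$. Second, your exact count of $s$ automorphisms in the case $l=n$ is exactness for the family $\{G_c \mid c\in K^{\times}\}$ --- which is the paper's notion of the inner automorphism subgroup, so this matches the assertion --- but that no further automorphisms extending $id_K$ exist is settled in the paper only in Theorem \ref{cor:innerII}, under the additional hypothesis $n\geq m-1$. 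Neither point is a gap; both are one-sentence fixes.
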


 Every automorphism $H_{id,k}\in {\rm Aut}_F(S_f)$ such that $N_{K/F}(k) = 1$ is an inner automorphism.
If  $$n\geq m-1\text{ and }a_{m-1}\not=0$$ or if
$$n=m,\quad a_i=0 \text{ for all } i\not=0\text{ and } a_0\in K\setminus F$$
 these are all the automorphisms extending $id_K$ \cite[Theorem 16]{BP}.

Let
$\Delta^{\sigma}(l)=\{\sigma(c)lc^{-1}\,|\, c\in K^\times\}$
 denote the $\sigma$-conjugacy class of $l$ \cite{LL}.
By Hilbert's Theorem 90,
 ${\rm ker}(N_{K/F})=\Delta^{\sigma}(1).$
 In particular, for every $a\in F^{\times}$ there exist exactly
$s=(q^n-1)/(q-1)$ elements $u\in K$ with $N_{K/F}(u) = a$.

\begin{proposition}\label{prop:estimate}
Let  $n\geq m-1$.
Then there exist at most
$$|{\rm ker}(N_{K/F})|=(q^n-1)/(q-1)$$
 distinct automorphisms of $S_f$ of the form
$H_{id,k}$ such that $N_{K/F}(k) = 1$. These are inner.
\end{proposition}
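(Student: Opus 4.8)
The plan is to derive the upper bound from the injectivity of the assignment $k\mapsto H_{id,k}$ combined with the cardinality of $\ker(N_{K/F})$, and to obtain the innerness directly from the statement recorded earlier in this section.

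First I would show that the map $k\mapsto H_{id,k}$, defined on those $k\in K^\times$ for which $H_{id,k}$ is an automorphism, is injective. Specializing the defining formula for $H_{\tau,k}$ to $\tau=id$ and to the element $x=t$ gives $H_{id,k}(t)=kt$. Since $m\geq 2$, the elements $kt$ and $k't$ are represented by polynomials of degree $1<m$, hence they agree in $S_f=R_m$ if and only if $k=k'$. Therefore $H_{id,k}=H_{id,k'}$ forces $k=k'$. Equivalently, one checks from the multiplication formula that $H_{id,k}\circ H_{id,k'}=H_{id,kk'}$ (using $\prod_{l=0}^{i-1}\sigma^l(kk')=\prod_{l=0}^{i-1}\sigma^l(k)\prod_{l=0}^{i-1}\sigma^l(k')$ on each basis element $t^i$), so $k\mapsto H_{id,k}$ is a group homomorphism whose kernel consists of the $k$ with $kt=t$, i.e. $k=1$.

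Next I would count. Every automorphism of $S_f$ of the form $H_{id,k}$ with $N_{K/F}(k)=1$ corresponds, under the injection above, to an element of $\ker(N_{K/F})$. As $N_{K/F}\colon K^\times\to F^\times$ is surjective with $|K^\times|=q^n-1$ and $|F^\times|=q-1$, we have $|\ker(N_{K/F})|=(q^n-1)/(q-1)$; equivalently, by Hilbert's Theorem $90$, $\ker(N_{K/F})=\Delta^\sigma(1)$ has this order. Injectivity then yields at most $(q^n-1)/(q-1)$ distinct such automorphisms. I note that this bound does not really use $n\geq m-1$, since the admissible $k$ always form a subset of $\ker(N_{K/F})$ once $N_{K/F}(k)=1$ is imposed; the hypothesis $n\geq m-1$ becomes relevant only when one wants to identify this subgroup inside the full automorphism group.

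Finally, the innerness is exactly the assertion already recorded before the proposition (and compatible with Proposition \ref{prop:innerII}): each $H_{id,k}$ acts as the identity on $K\cdot 1$, hence on $\mathbb{N}(S_f)\subseteq\mathbb{N}_l(S_f)=K\cdot 1$, and when $N_{K/F}(k)=1$ it is realized as an inner automorphism $G_c$ for a suitable invertible nucleus element $c$. I expect the only genuine subtlety to be the injectivity step — making precise that equality of two such automorphisms may be tested on the single element $t$, which is where $m\geq 2$ enters; the cardinality computation and the innerness are then immediate from the results already available.
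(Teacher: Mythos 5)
Your proof is correct and follows essentially the same route as the paper: both arguments bound the count by the injectivity of $k\mapsto H_{id,k}$ into $\ker(N_{K/F})$ (the paper phrases this via Hilbert~90 representatives, writing $k=c^{-1}\sigma(c)$ and noting $H_{id,k}=H_{id,l}$ if and only if $c^{-1}\sigma(c)=d^{-1}\sigma(d)$, while you verify the equivalent statement $k=l$ directly by evaluating at $t$), and both obtain innerness from the fact $H_{id,k}=G_c$ recorded from \cite[Theorem 16]{BP} just before the proposition. Your side remark that the hypothesis $n\geq m-1$ is not needed for the bound itself is consistent with the paper, which states the innerness of such $H_{id,k}$ without that hypothesis and uses $n\geq m-1$ only later to conclude these are \emph{all} automorphisms extending $id_K$.
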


\begin{proof}
Every automorphism $H_{id,k}\in {\rm Aut}_F(S_f)$ extending $id_K$ such that $N_{K/F}(k) = 1$ is an inner automorphism
by \cite[Theorem 16]{BP}.
 More precisely, for any $k,l\in K^\times$ with $N_{K/F}(k) = 1=N_{K/F}(l)$  there are $c,d\in  K^{\times}$ such that
 $k=c^{-1}\sigma(c)$, $l=d^{-1} \sigma(d)$, and $H_{id,k} =G_c$, $ H_{id,l}=G_d$ (cf. the proof of \cite[Theorem 16]{BP}). We have
  $$H_{id,k} = H_{id,l} \text{ if and only if }
c^{-1} \sigma(c) = d^{-1} \sigma(d).$$ Therefore there exist at
most $|{\rm ker}(N_{K/F})|=|\Delta^{\sigma}(1)|$ distinct automorphisms of $S_f$ of the
form $H_{id,k}$.
\end{proof}

Proposition \ref{prop:estimate} and Proposition
\ref{prop:innerII} imply the following estimates for the number of inner automorphisms of $S_f$:

\begin{theorem}\label{cor:innerII}
Let   $n \geq m-1$.
 If $S_f$ has nucleus  $K\cdot 1$ then it has $s=(q^n-1)/(q-1)$ inner automorphisms extending $id_K$.
 These form a cyclic subgroup of ${\rm Aut}_F(S_f)$ isomorphic to ${\rm ker}(N_{K/F})$.
\end{theorem}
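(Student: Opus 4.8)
The plan is to combine the two preceding propositions to settle the count, and then to upgrade the count to a statement about group structure by realising the inner automorphisms as the image of an explicit homomorphism out of $K^\times$.

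First I would fix the count. By Proposition \ref{prop:innerII}, the hypothesis $\mathbb{N}(S_f)=K\cdot 1$ already yields $s=(q^n-1)/(q-1)$ inner automorphisms, each of the form $H_{id,k}$ and each extending $id_K$. To identify these as exactly the maps $H_{id,k}$ with $N_{K/F}(k)=1$, I would use that each such inner automorphism is $G_c$ for some $c\in K^\times$ and that, as in the proof of Proposition \ref{prop:estimate}, $G_c=H_{id,k}$ with $k=c^{-1}\sigma(c)$; since $N_{K/F}(c^{-1}\sigma(c))=N_{K/F}(c)^{-1}N_{K/F}(\sigma(c))=1$ by Galois-invariance of the norm, all $s$ of them satisfy $N_{K/F}(k)=1$. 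As $n\geq m-1$, Proposition \ref{prop:estimate} bounds the number of such $H_{id,k}$ above by $s$, so the counts coincide: there are precisely $s$ of them, all inner.

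Next I would establish the subgroup and its isomorphism type. The inner automorphisms coming from invertible nucleus elements form a subgroup of ${\rm Aut}_F(S_f)$ (Section \ref{subsec:nonassalgs}), and since $\mathbb{N}(S_f)=K\cdot 1$ is a field every nonzero nucleus element is invertible, so all $s$ of our maps lie in this subgroup. To identify it, introduce $\Psi\colon K^\times\to{\rm Aut}_F(S_f)$, $c\mapsto G_c$, where $G_c(x)=c^{-1}xc$ is unambiguous because $c$ lies in the associative nucleus. Using the commutativity of $K$ together with associativity with nucleus elements, one checks $G_c\circ G_{c'}=G_{cc'}$, so $\Psi$ is a homomorphism whose image is exactly the group of inner automorphisms extending $id_K$.

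Finally I would compute the kernel and conclude. The map $G_c$ is the identity iff $c$ is central in $S_f$; testing against $t$, centrality requires $tc=ct$, and since $tc=\sigma(c)t$ this forces $\sigma(c)=c$, i.e.\ $c\in F^\times$, while conversely $F^\times$ is central, so $\ker\Psi=F^\times$. Hence the inner automorphisms are isomorphic to $K^\times/F^\times$, which is cyclic (a quotient of the cyclic group $K^\times$) of order $(q^n-1)/(q-1)=s$. To name the isomorphism canonically, note that $K^\times\to{\rm ker}(N_{K/F})$, $c\mapsto c^{-1}\sigma(c)$, is surjective by Hilbert's Theorem 90 (so that ${\rm ker}(N_{K/F})=\Delta^\sigma(1)$) and has the same kernel $F^\times$; thus $K^\times/F^\times\cong{\rm ker}(N_{K/F})$ and the inner automorphism group is isomorphic to ${\rm ker}(N_{K/F})$ via $G_c=H_{id,c^{-1}\sigma(c)}\mapsto c^{-1}\sigma(c)$. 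The only genuinely new step is the kernel computation—pinning down which nucleus elements act trivially by conjugation—while the count rests on the cited propositions and the remaining assertions are bookkeeping about finite fields.
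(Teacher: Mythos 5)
Your proof is correct, and on the counting clause it coincides with the paper's: the paper's own proof is precisely the sandwich you set up, with Proposition \ref{prop:estimate} giving the upper bound of $|{\rm ker}(N_{K/F})|=s$ distinct automorphisms $H_{id,k}$ with $N_{K/F}(k)=1$ (all inner, all extending $id_K$) and Proposition \ref{prop:innerII} giving the lower bound of $s$ inner automorphisms coming from nucleus elements --- and then it stops. The clause about a cyclic subgroup isomorphic to ${\rm ker}(N_{K/F})$ is left implicit there, resting on the remark in Section \ref{subsec:nonassalgs} that the maps $G_m$ with $m$ invertible in the nucleus form a subgroup, and on the parameterization $H_{id,k}\leftrightarrow k\in\Delta^{\sigma}(1)={\rm ker}(N_{K/F})$ buried in the proof of Proposition \ref{prop:estimate}. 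Your second half supplies exactly this missing bookkeeping, and does so cleanly: the map $\Psi\colon K^{\times}\to{\rm Aut}_F(S_f)$, $c\mapsto G_c$, is well defined and multiplicative because the hypothesis puts $c$ in the commutative associative nucleus $K\cdot 1$ (this is where the hypothesis genuinely enters; note it in fact forces $n=m$ and $f(t)=t^m-a$, since $K\cdot 1\subseteq\mathbb{N}_r(S_f)\cong\mathbb{F}_{q^m}$); the kernel computation via $G_c(t)=c^{-1}\sigma(c)t$ is valid since $m\geq 2$ ensures $t\in S_f\setminus K$; and Hilbert 90 with the matching kernel $F^{\times}$ upgrades $K^{\times}/F^{\times}$ to ${\rm ker}(N_{K/F})$ canonically via $G_c\mapsto c^{-1}\sigma(c)$. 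Compared with the paper, your route buys an explicit proof of closure under composition and an explicit isomorphism rather than a mere equality of orders of two cyclic groups, and it even renders Proposition \ref{prop:innerII} redundant, since $|{\rm im}\,\Psi|=|K^{\times}/F^{\times}|=s$ already yields the lower bound with distinctness built in. One caveat you handle correctly: the image of $\Psi$ is by construction only the set of $G_c$ with $c\in K^{\times}$, so identifying it with \emph{all} inner automorphisms extending $id_K$ (the paper's definition of inner allows arbitrary $m$ with a left inverse) really does require the upper bound from Proposition \ref{prop:estimate} --- i.e., that every automorphism extending $id_K$ is some $H_{id,k}$ with $N_{K/F}(k)=1$, hence some $G_c$ --- and you invoke it at just that point, so there is no gap.
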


\begin{proof}
By Proposition \ref{prop:estimate}, there are at most $|{\rm ker}(N_{K/F})|=(q^n-1)/(q-1)$ distinct automorphisms
$H_{id,k}$ of $S_f$ and all of these are inner and extend $id_K$.
By Proposition
\ref{prop:innerII}, if $S_f$ has nucleus $K\cdot 1=\mathbb{F}_{q^n}\cdot 1$ then there exist at least
$s=(q^n-1)/(q-1)$ inner automorphisms, all extending $id_{K}$,  those
determined by the elements in its nucleus which do not lie in $F$. Then there are exactly $s$ inner automorphisms.
\end{proof}

Thus if  $N=\mathbb{N}(S_f)=\mathbb{F}_{q^l}\cdot 1$, $l>1$, is strictly contained in $K\cdot 1$, then
  $S_f$ has  $t$ inner automorphisms extending $id_N$, with
 $$\frac{q^l-1}{q-1}\leq t\leq \frac{q^n-1}{q-1}.$$

%
%

%
%

\section{Nonassociative  cyclic algebras} \label{sec:nonasscyclicfinite}

In this section unless specifically noted otherwise, let
$$f(t)=t^m-a\in K[t;\sigma], \quad  a\in K\setminus F$$
be irreducible (which is always the case if $a$ belongs to no proper subfield of $K/F$), $\sigma$ have order $n=m$ and let
 $$A=(K/F,\sigma,a)=K[t;\sigma]/K[t;\sigma](t^m-a).$$

Then $A$ is an example of a Sandler semifield \cite{San62}. $A$ is also called a \emph{nonassociative cyclic (division) algebra
 of degree $m$}, because  its construction (hence its multiplication) is similar to the one of an associative cyclic algebra which is defined by
 $ K[t;\sigma]/ K[t;\sigma](t^m-a)$ but choosing $a\in F^\times$. For $m=n=2$, $A$ is also called a \emph{nonassociative
 quaternion algebra} and was first described by Dickson \cite{D06}.
We know that $\mathbb{N}_l(A)=\mathbb{N}_m(A)=\mathbb{N}_r(A)=K\cdot 1$.
Moreover,
$$(K/F,\sigma,a)\cong (K/F,\sigma,b)$$
 if and only if
$$\sigma^i(a) = kb \text{ for some } 0\leq i \leq m-1 \text{ and some }k \in F^{\times}$$
 \cite[Corollary 34]{BP}.

By Theorem \ref{cor:innerII}, $A$ has exactly $s=(q^m-1)/(q-1)$ inner
automorphisms, all of them extending $id_K$. These are given by the
 $F$-automorphisms $H_{id,l}$   for all $l\in K$ such that $N_{K/F}(l)=1$.
  The subgroup they generate is cyclic and isomorphic to ${\rm ker}(N_{K/F})$.

\subsection{}

\begin{theorem} (\cite[Theorem 22]{BP}) \label{thm:aut1II}
 Suppose  $m$ divides $ (q-1)$ and
let $\omega$ denote a non-trivial $m$th root of unity in $F$.
\\ (i)  $\langle H_{id,\omega}\rangle$
is a cyclic subgroup of ${\rm Aut}_F(A)$ of order at most $m$.
 If $\omega$ is a primitive $m$th root of unity, then $\langle H_{id,\omega}\rangle$
has order $m$.
\\ (ii)  Suppose $N_{K/F}(l)=\omega$ is a primitive  $m$th root of unity and
$\sigma(a)= \omega a$. Then the subgroup generated by
$H=H_{\sigma,l}$ has order $m^2$.
\\ (iii) For each $m$th root of unity $\omega\in F$, $l\in K$ with $N_{K/F}(l)=\omega$ and
a $j\in \{1,\dots, m-1\}$ such that $\sigma^j(a)=\omega a$, there
 is an automorphism $H_{\sigma^{j},l}$ extending $\sigma^{j}$.
\end{theorem}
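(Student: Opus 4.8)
The plan is to reduce everything to two observations: that for $f(t)=t^m-a$ the automorphism criterion \eqref{eqn:neccessary} collapses to the single equation $\tau(a)=N_{K/F}(k)\,a$ (the conditions for $i\neq 0$ are vacuous since $a_i=0$, while for $i=0$ we have $\prod_{l=0}^{m-1}\sigma^l(k)=N_{K/F}(k)$ because $n=m$), and a clean composition law for the maps $H_{\sigma^j,k}$. Writing $\pi_i(k)=\prod_{l=0}^{i-1}\sigma^l(k)$ for the scalar by which $H_{\tau,k}$ multiplies the $t^i$-component, so that $\pi_0(k)=1$, $\pi_1(k)=k$, $\pi_m(k)=N_{K/F}(k)$ and $\pi_{i+1}(k)=k\,\sigma(\pi_i(k))$, a direct computation of $H_{\sigma^j,k}\bigl(H_{\sigma^{j'},k'}(x_it^i)\bigr)$ shows that applying $\sigma^j$ distributes over $\pi_i(k')$, giving $\pi_i\bigl(\sigma^j(k')k\bigr)=\sigma^j(\pi_i(k'))\,\pi_i(k)$ for every $i$, and hence
\begin{equation*}
H_{\sigma^j,k}\circ H_{\sigma^{j'},k'}=H_{\sigma^{j+j'},\,\sigma^j(k')\,k}.
\end{equation*}
I expect this composition law to be the main obstacle, as it needs careful bookkeeping of the telescoping products $\pi_i$; once it is in hand the three parts follow formally. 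I also record that $H_{id,k}=id$ if and only if $k=1$, which is read off from the $t^1$-component.

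For (i), the composition law with $j=j'=0$ gives $H_{id,\omega}^{r}=H_{id,\omega^{r}}$. Since $\omega\in F^\times$ is an $m$th root of unity, $N_{K/F}(\omega)=\omega^m=1$, so the criterion holds and $H_{id,\omega}$ is an automorphism, with $H_{id,\omega}^m=H_{id,1}=id$; thus $\langle H_{id,\omega}\rangle$ is cyclic of order dividing $m$. If $\omega$ is primitive, then $H_{id,\omega}^r=id$ forces $\omega^r=1$, i.e.\ $m\mid r$, so the order is exactly $m$.

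For (ii), I would iterate the composition law from $H=H_{\sigma,l}$: an induction using $\pi_{i+1}(k)=k\,\sigma(\pi_i(k))$ yields $H^{r}=H_{\sigma^{r},\pi_r(l)}$. Taking $r=m$ and using $\sigma^m=id$ together with $\pi_m(l)=N_{K/F}(l)=\omega$ gives $H^m=H_{id,\omega}$, so $m\mid{\rm ord}(H)$ while ${\rm ord}(H)\mid m^2$ by part (i). To pin the order down exactly, I would compute $H^{mq}=H_{id,\pi_{mq}(l)}$ and use the $m$-periodicity of $\sigma$ to get $\pi_{mq}(l)=N_{K/F}(l)^{q}=\omega^{q}$; this equals $1$ precisely when $m\mid q$, i.e.\ when $m^2\mid mq$. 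Hence ${\rm ord}(H)=m^2$.

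For (iii) there is nothing to iterate: the hypotheses $N_{K/F}(l)=\omega$ and $\sigma^j(a)=\omega a$ are exactly the specialized criterion $\sigma^j(a)=N_{K/F}(l)\,a$, so $H_{\sigma^j,l}$ is an automorphism by Proposition \ref{cor:Sandler}(i) (applicable since $n=m\geq m-1$), and it extends $\sigma^j$ because it acts as $\sigma^j$ on the constants $K\cdot 1$.
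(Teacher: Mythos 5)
Your two structural tools are sound and correctly verified: for $f(t)=t^m-a$ the condition \eqref{eqn:neccessary} is vacuous for $i\neq 0$ and reads $\tau(a)=N_{K/F}(k)a$ for $i=0$ (using $n=m$), and the composition law $H_{\sigma^j,k}\circ H_{\sigma^{j'},k'}=H_{\sigma^{j+j'},\,\sigma^j(k')k}$ holds because $\pi_i\bigl(\sigma^j(k')k\bigr)=\sigma^j\bigl(\pi_i(k')\bigr)\pi_i(k)$, $\sigma$ being an automorphism of the commutative field $K$. (Note the paper itself gives no proof of this theorem, only the citation to \cite[Theorem 22]{BP}, so a self-contained argument along these lines is exactly what is needed.) With $H_{id,k}=id$ if and only if $k=1$, your part (i) is complete: $H_{id,\omega}$ is an automorphism since $N_{K/F}(\omega)=\omega^m=1$, and $H_{id,\omega}^r=H_{id,\omega^r}$ gives order equal to the order of $\omega$. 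Part (iii) is likewise complete, being just Proposition \ref{cor:Sandler}(i) with $n=m\geq m-1$.

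The genuine problem is the last step of (ii). You reuse the letter $q$, which throughout this paper denotes $|F|$ and satisfies $m\mid q-1$. Read that way, your computation shows only that $H^{mq}=H_{id,\omega^q}=H_{id,\omega}\neq id$ (indeed $\omega^q=\omega$ since $q\equiv 1 \bmod m$, and your claim ``$\omega^q=1$ precisely when $m\mid q$'' never triggers, as $\gcd(m,q)=1$). Writing $d={\rm ord}(H)$, you have $m\mid d\mid m^2$, so $d=mg$ with $g\mid m$; since $mg\mid mq$ if and only if $g\mid q$ if and only if $g=1$, the non-vanishing of $H^{mq}$ excludes only $d=m$. For composite $m$ this is insufficient: e.g.\ for $m=4$ it leaves $d=8$ open. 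If instead you intended $q$ as a generic exponent, the statement ``$H^{mq}=id$ iff $m^2\mid mq$'' does close the argument, but then the detour is unnecessary: you have already shown $H^m=H_{id,\omega}$, which by your part (i) has order exactly $m$ because $\omega$ is primitive; hence $m={\rm ord}(H^m)=d/\gcd(d,m)$ forces $m\mid d$, so $\gcd(d,m)=m$ and $d=m^2$. Equivalently, $H^{mg}=H_{id,\omega^g}\neq id$ for every proper divisor $g$ of $m$. Replacing your final two sentences of (ii) by this one-line deduction repairs the proof; everything else stands.
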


\begin{proposition} (\cite[Theorem 21]{BP})\label{cor:aut_nonass_cyclicII}
A Galois automorphism $\sigma^j\not=id$ can be extended to an automorphism $H\in {\rm Aut}_F(A)$
if and only if there is some $l \in K$ such that
 $$\sigma^j(a)= N_{K/F}(l) a.$$
In that case, $H=H_{\sigma^j,l}$  and if $m$ is prime then  $N_{K/F}(l)=\omega$ is a primitive
$m$th root of unity
 and   there exist  $s =(q^m-1)/(q-1)$ such extensions.
\end{proposition}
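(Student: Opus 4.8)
The plan is to derive the statement from Proposition \ref{cor:Sandler}(i), which already describes all automorphisms of a Sandler semifield when $n\geq m-1$, and then to extract the additional information available when $m$ is prime.

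First I would observe that since $\sigma$ has order $n=m$, the product $\prod_{l=0}^{m-1}\sigma^l(k)$ appearing in Proposition \ref{cor:Sandler}(i) is precisely the norm $N_{K/F}(k)$. Hence the defining condition for $H_{\sigma^j,k}$ to be an automorphism, namely $\sigma^j(a)=\left(\prod_{l=0}^{m-1}\sigma^l(k)\right)a$, becomes exactly $\sigma^j(a)=N_{K/F}(k)a$. Because $n=m\geq m-1$, Proposition \ref{cor:Sandler}(i) guarantees that these $H_{\sigma^j,k}$ are \emph{all} the automorphisms of $A$. This immediately yields both the equivalence and the fact that any extension of $\sigma^j$ has the form $H_{\sigma^j,l}$ for a suitable $l$.

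Next I would establish primitivity of $\omega=N_{K/F}(l)$ under the hypothesis that $m$ is prime. I already know from Proposition \ref{cor:Sandler}(i) that $\omega$ is an $m$th root of unity, and that $\omega\in F^\times$ since it is a norm. To rule out $\omega=1$ I would argue by contradiction: if $\omega=1$, then $\sigma^j(a)=a$, so $a\in {\rm Fix}(\sigma^j)$. Since $m$ is prime and $0<j\leq m-1$, we have $\gcd(j,m)=1$, so $\sigma^j$ generates ${\rm Gal}(K/F)$ and ${\rm Fix}(\sigma^j)=F$; this forces $a\in F$, contradicting $a\in K\setminus F$. Thus $\omega\neq 1$, and as the only divisors of the prime $m$ are $1$ and $m$, the $m$th root of unity $\omega$ must have order $m$, i.e. is primitive.

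Finally I would count the extensions. With $\omega=\sigma^j(a)/a\in F^\times$ fixed, the extensions of $\sigma^j$ correspond bijectively to the solutions $l\in K^\times$ of $N_{K/F}(l)=\omega$: distinct $l$ give distinct automorphisms, since reading off the coefficient of $t$ in $H_{\sigma^j,l}(t)=lt$ already distinguishes them. Because $N_{K/F}\colon K^\times\to F^\times$ is surjective with all fibres of cardinality $|{\rm ker}(N_{K/F})|=s=(q^m-1)/(q-1)$, and $\omega$ lies in the image (being a norm), there are exactly $s$ such $l$, hence exactly $s$ extensions of $\sigma^j$. None of these steps presents a serious obstacle; the only point requiring care is the primitivity argument, where one must invoke the primality of $m$ twice, first to obtain ${\rm Fix}(\sigma^j)=F$ and then to conclude that a nontrivial $m$th root of unity is automatically primitive.
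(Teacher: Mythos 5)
Your proof is correct, but there is nothing in the paper to compare it against line by line: the paper states this proposition with the citation \cite[Theorem 21]{BP} and gives no proof of its own, so you have supplied an internal derivation of a result the paper merely imports. Your route is legitimate within the paper's framework: you specialize Proposition \ref{cor:Sandler}(i) (itself quoted from \cite[Corollaries 13, 14]{BP}) to $n=m$, where $\prod_{l=0}^{m-1}\sigma^l(k)=N_{K/F}(k)$ because $\sigma$ generates ${\rm Gal}(K/F)$ of order $m$, and the completeness clause ``if $n\geq m-1$ these are all automorphisms'' applies since $n=m$; this correctly yields both directions of the equivalence and the form $H=H_{\sigma^j,l}$, because any automorphism restricts on $K\cdot 1=\mathbb{N}_l(A)$ to its $\tau$. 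The two pieces you add beyond the quoted proposition are exactly what the cited theorem contributes, and both are handled soundly: for primitivity, your contradiction $\omega=1\Rightarrow a\in{\rm Fix}(\sigma^j)=F$ uses primality of $m$ correctly twice (first so that $\sigma^j\neq id$ generates ${\rm Gal}(K/F)$, then so that a nontrivial $m$th root of unity has order exactly $m$); for the count, your fibre argument agrees with the paper's own remark in Section \ref{sec:inner} that each value in $F^{\times}$ is attained by exactly $s=(q^n-1)/(q-1)$ elements of $K$, and your observation that $H_{\sigma^j,l}(t)=lt$ separates distinct $l$ is the right way to see that distinct solutions give distinct extensions. In short: the argument is complete modulo the quoted Proposition \ref{cor:Sandler}, and its value is precisely that it replaces an external citation by a short self-contained proof consistent with the paper's notation and preliminaries.
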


\begin{theorem}  \cite[Theorem 24]{BP} \label{thm:automorphism_of_Sf_field_caseIII}
Let $K/F$ have prime degree $m$. Suppose that $F$ contains a primitive $m$th root of unity, where $m$ is coprime to the characteristic of $F$
and so $K = F(d)$ where $d$ is a root of an irreducible polynomial $t^m - c \in F[t]$. Then $H$ is an
automorphism of $A$ extending $\sigma^j\not=id$ if and only if
$H=H_{\sigma^j , k}$ for some $k \in K^{\times}$,
 where
$N_{K/F}(k)$
 is a primitive $m$th root of unity and
$a=cd^l$ for some $c\in F^\times$ and some power $d^{l}$.
\end{theorem}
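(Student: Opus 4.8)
The plan is to combine the Kummer-theoretic description of $K/F$ with the extension criterion of Proposition \ref{cor:aut_nonass_cyclicII}. First I would fix the setup: since $F$ contains a primitive $m$th root of unity and $m$ is prime to the characteristic, $K = F(d)$ with $d^m = c \in F^\times$, and $\{1, d, \dots, d^{m-1}\}$ is an $F$-basis of $K$. A generator $\sigma$ of $\mathrm{Gal}(K/F)$ must send $d$ to another root of $t^m - c$, so $\sigma(d) = \omega d$ for some $m$th root of unity $\omega$; because $F$ already contains all $m$th roots of unity we have $\omega \in F$, and since $\sigma$ has order $m$ the element $\omega$ is primitive. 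Consequently $\sigma^j(d^i) = \omega^{ij} d^i$ for all $i,j$, which is the computation everything rests on.

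Next I would recall from Proposition \ref{cor:aut_nonass_cyclicII} that, for $m$ prime, a nontrivial $\sigma^j$ extends to some $H \in \mathrm{Aut}_F(A)$ if and only if there is $k \in K^\times$ with $\sigma^j(a) = N_{K/F}(k)\,a$, in which case $H = H_{\sigma^j,k}$ and $N_{K/F}(k)$ is forced to be a primitive $m$th root of unity. Thus the theorem reduces to showing that solvability of $\sigma^j(a) = \zeta a$, for $\zeta$ an $m$th root of unity lying in the image of $N_{K/F}$, is equivalent to $a$ being a single $F$-multiple of a power of $d$.

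For the forward direction I would expand $a = \sum_{i=0}^{m-1} a_i d^i$ with $a_i \in F$ and apply $\sigma^j$ to obtain $\sum_i a_i \omega^{ij} d^i = \zeta \sum_i a_i d^i$. Comparing coefficients against the basis gives $a_i(\omega^{ij} - \zeta) = 0$ for every $i$, so $\omega^{ij} = \zeta$ whenever $a_i \neq 0$. Here is the crux: since $m$ is prime and $j \not\equiv 0 \pmod m$, the map $i \mapsto ij \bmod m$ is a bijection, so $\omega^{ij} = \zeta$ pins down $i$ uniquely; hence exactly one coefficient $a_l$ is nonzero and $a = a_l d^l$, that is, $a = c d^l$ with $c = a_l \in F^\times$. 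Note $l \not\equiv 0 \pmod m$ because $a \notin F$, which is precisely why $\zeta = \omega^{lj}$ comes out primitive. This uniqueness step, powered by the primality of $m$, is the main point of the argument; everything else is bookkeeping.

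For the converse I would assume $a = c d^l$ with $c \in F^\times$; as $a \notin F$ we have $l \not\equiv 0 \pmod m$, so $\sigma^j(a)/a = \omega^{jl}$ is a primitive $m$th root of unity whenever $j \not\equiv 0$, again because $m$ is prime. Since $N_{K/F}\colon K^\times \to F^\times$ is surjective over the finite field $F$, there is $k \in K^\times$ with $N_{K/F}(k) = \omega^{jl}$, whence $\sigma^j(a) = N_{K/F}(k)\,a$; Proposition \ref{cor:aut_nonass_cyclicII} then yields the extension $H = H_{\sigma^j,k}$ with $N_{K/F}(k)$ primitive, as required. The only mild subtlety is to keep the two roles of the constant separate, namely the Kummer constant $c$ with $d^m = c$ versus the coefficient $c$ appearing in $a = c d^l$; beyond the coefficient-comparison step no real difficulty arises.
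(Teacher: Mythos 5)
Your argument is correct and is essentially the intended one: the paper states this theorem without proof (it is quoted from \cite[Theorem 24]{BP}), and your route --- reducing via the extension criterion of Proposition \ref{cor:aut_nonass_cyclicII} to solvability of $\sigma^j(a)=\zeta a$, then using the Kummer computation $\sigma^j(d^i)=\omega^{ij}d^i$ and coefficient comparison in the basis $\{1,d,\dots,d^{m-1}\}$, with primality of $m$ making $i\mapsto ij \bmod m$ a bijection so that exactly one coefficient survives --- is precisely the eigenvector argument the paper itself invokes from \cite[Lemma 23]{BP} elsewhere (e.g.\ in the proofs of Theorems \ref{thm:Automorphisms of nonassociative cyclic algebras over finite fields} and \ref{numb}). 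Your appeal to surjectivity of $N_{K/F}$ in the converse is legitimate here since the paper's standing assumptions make $F$ finite, and your derivation of primitivity of $\zeta=\omega^{lj}$ from $l,j\not\equiv 0 \bmod m$ correctly closes the loop.
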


For more general polynomials than $f$ this yields:

\begin{corollary}
Suppose that $F$ contains a primitive $m$th root of unity, where $m$ is coprime to the characteristic of $F$
and so $K = F(d)$ where $d$ is a root of some $t^m - c \in F[t]$. Let
$$g(t) = t^m -\sum_{i=0}^{m-1} a_i t^i \in K[t;\sigma]$$
and $a_{0}\in K\setminus F$, such that $a_0\not=c d^i$ for any $0 \leq i \leq m-1$, $c\in F^\times$.
  Then every $F$-automorphism of $S_g$ leaves $K$ fixed, is inner and
  ${\rm Aut}_F(S_g)\subset {\rm ker}(N_{K/F})$
   is a subgroup, thus cyclic with at most $s=(q^m-1)/(q-1)$ elements.
 \end{corollary}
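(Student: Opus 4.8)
The plan is to reduce the statement about the general polynomial $g(t)=t^m-\sum_{i=0}^{m-1}a_it^i$ to the already-established results about the nonassociative cyclic algebra $A=(K/F,\sigma,a_0)$, using the subgroup-comparison theorems from earlier in the excerpt. First I would invoke Proposition \ref{cor:Sandler}(ii) with $f(t)=t^m-a_0$: since $a_0\in K\setminus F$ and $n=m\geq m-1$, the set $\{H\in{\rm Aut}_F(S_g)\mid H=H_{\tau,k}\}$ is a subgroup of the corresponding set for $S_f=A$, and moreover (because $n\geq m-1$) these sets are the full automorphism groups. Hence ${\rm Aut}_F(S_g)\subset{\rm Aut}_F(A)$ is a subgroup, which lets me transfer any structural information about ${\rm Aut}_F(A)$ to ${\rm Aut}_F(S_g)$.

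The key step is then to show that, under the hypothesis $a_0\neq cd^i$ for all $0\leq i\leq m-1$ and $c\in F^\times$, no automorphism of $A$ can extend a nontrivial Galois automorphism $\sigma^j\neq id$. This is exactly Theorem \ref{thm:automorphism_of_Sf_field_caseIII}: that theorem states that $H=H_{\sigma^j,k}$ with $\sigma^j\neq id$ is an automorphism of $A$ precisely when $N_{K/F}(k)$ is a primitive $m$th root of unity \emph{and} $a_0=cd^l$ for some $c\in F^\times$ and some power $d^l$. The standing hypothesis directly negates the second condition, so no such $H$ exists. Therefore every automorphism of $A$ must extend $\tau=id$, i.e.\ leave $K\cdot 1$ fixed. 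Combining this with the subgroup relation, every $F$-automorphism of $S_g$ also leaves $K$ fixed and is of the form $H_{id,k}$.

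Next I would identify these $K$-fixing automorphisms as inner and as elements of ${\rm ker}(N_{K/F})$. Since $A$ has nucleus $K\cdot 1=\mathbb{F}_{q^m}\cdot 1$ and $n=m\geq m-1$, Theorem \ref{cor:innerII} applies: the automorphisms extending $id_K$ are exactly the $s=(q^m-1)/(q-1)$ inner automorphisms $H_{id,k}$ with $N_{K/F}(k)=1$, forming a cyclic group isomorphic to ${\rm ker}(N_{K/F})$. Because ${\rm Aut}_F(S_g)$ is a subgroup consisting entirely of such $H_{id,k}$, it embeds into ${\rm ker}(N_{K/F})$; consequently it is cyclic and has at most $s$ elements. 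It remains to confirm that each $H_{id,k}\in{\rm Aut}_F(S_g)$ is itself inner: this follows because $H_{id,k}$ extends $id_K$ with $N_{K/F}(k)=1$, and by \cite[Theorem 16]{BP} (quoted in the inner-automorphisms section) every such automorphism is inner.

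The main obstacle I anticipate is making the reduction to $A$ fully rigorous rather than merely plausible. Specifically, I must be careful that the ``subgroup'' statement in Proposition \ref{cor:Sandler}(ii) really does identify ${\rm Aut}_F(S_g)$ with a subset of ${\rm Aut}_F(A)$ as a set of pairs $(\tau,k)$, so that the nonexistence of $\sigma^j$-extensions in $A$ genuinely forces $\tau=id$ for $S_g$; the potential subtlety is that an automorphism of $S_g$ of the form $H_{\sigma^j,k}$ with $j\neq 0$ might conceivably satisfy Equation \eqref{eqn:neccessary} for $g$ without being an automorphism of $A$. However, Proposition \ref{cor:Sandler}(ii) asserts precisely that any $H_{\tau,k}\in{\rm Aut}_F(S_g)$ lies in the corresponding set for $S_f=A$, so this containment at the level of $(\tau,k)$-pairs is exactly what rules out the nontrivial $\tau$ once Theorem \ref{thm:automorphism_of_Sf_field_caseIII} is applied. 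Verifying that the hypotheses of these cited results (in particular $n=m$, $a_0\in K\setminus F$, and the root-of-unity setup of the field extension) all hold simultaneously is the one place demanding genuine attention.
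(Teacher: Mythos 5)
Your proof is correct and follows essentially the route the paper intends: the paper disposes of this corollary with the single citation \cite[Corollary 25]{BP}, and your chain --- Proposition \ref{cor:Sandler}(ii) to embed ${\rm Aut}_F(S_g)$ into ${\rm Aut}_F(A)$ at the level of $(\tau,k)$-pairs, Theorem \ref{thm:automorphism_of_Sf_field_caseIII} to rule out $\tau\neq id$ because $a_0\neq cd^i$, and Theorem \ref{cor:innerII} together with \cite[Theorem 16]{BP} for innerness and the cyclic bound by $s$ --- is precisely the derivation compressed into that citation. The only point to keep in view is that Theorem \ref{thm:automorphism_of_Sf_field_caseIII} is stated for \emph{prime} degree $m$ (which also guarantees that $t^m-a_0$ is irreducible for $a_0\in K\setminus F$, so that the detour through the semifield $A$ is legitimate), a hypothesis the corollary tacitly inherits from the surrounding subsection even though its statement does not repeat it.
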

  This follows from \cite[Corollary 25]{BP}.

\begin{corollary}
Suppose that $F$ does not contain an $m$th root of unity (i.e., $m $ and $(q-1)$ are coprime). Let
$$g(t) = t^m -\sum_{i=0}^{m-1} a_i t^i \in K[t;\sigma]$$ and $a_{0}\in K\setminus F$.
 Then every $F$-automorphism of $S_g$ leaves $K$ fixed, is inner and
 ${\rm Aut}_F(S_g)$ is isomorphic to a subgroup of $ {\rm ker}(N_{K/F})$, thus cyclic with at most $s=(q^m-1)/(q-1)$ elements.
   In particular, if ${\rm ker}(N_{K/F})$ has prime order, then either
${\rm Aut}_F(S_g)$ is trivial or ${\rm Aut}_F(S_g)\cong {\rm ker}(N_{K/F})$.
 \end{corollary}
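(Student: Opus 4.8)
The plan is to put every automorphism into the normal form $H_{\sigma^j,k}$ and then force $j=0$ and $N_{K/F}(k)=1$. Since $n=m\ge m-1$, \cite{BP} tells us that every $F$-automorphism of $S_g$ equals $H_{\sigma^j,k}$ for some $0\le j\le m-1$ and some $k\in K^\times$ satisfying Equation (\ref{eqn:neccessary}); this is the only external input I need at the outset. First I would specialise (\ref{eqn:neccessary}) to $i=0$, giving $\sigma^j(a_0)=N_{K/F}(k)\,a_0$, and apply $N_{K/F}$ to both sides. Since the norm is invariant under $\mathrm{Gal}(K/F)$ and $N_{K/F}|_F=(\,\cdot\,)^m$, while $a_0\ne 0$, this reduces to $N_{K/F}(k)^m=1$, so $N_{K/F}(k)$ is an $m$-th root of unity in $F^\times$. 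As $|F^\times|=q-1$ is coprime to $m$, the only such root is $1$; hence $N_{K/F}(k)=1$ and the $i=0$ relation becomes $\sigma^j(a_0)=a_0$.

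The decisive step is to conclude $j=0$ (equivalently, that $H$ fixes $K$ pointwise) from $\sigma^j(a_0)=a_0$, i.e. from $a_0\in\mathrm{Fix}(\sigma^j)=\mathbb{F}_{q^{\gcd(j,m)}}$. I expect this to be the main obstacle. It is immediate when $m$ is prime, since then $\mathrm{Fix}(\sigma^j)=F$ for every $0<j<m$, so $a_0\in K\setminus F$ is incompatible with $j\ne 0$; and this is exactly the relevant regime, because $|\ker(N_{K/F})|=(q^m-1)/(q-1)$ is prime only when $m$ is prime. For composite $m$, however, $a_0$ may lie in a proper intermediate field $F\subsetneq\mathbb{F}_{q^{\gcd(j,m)}}\subsetneq K$ with $\sigma^j\ne\mathrm{id}$, and then this step breaks down; to cover general $m$ I would strengthen the hypothesis to the requirement that $a_0$ lie in no proper subfield of $K$ (equivalently, that $t^m-a_0$ be irreducible in $K[t;\sigma]$), which forces $\mathrm{Fix}(\sigma^j)=K$ and hence $j=0$.

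With $j=0$ in hand the remainder is routine. Every automorphism is now $H_{\mathrm{id},k}$ with $N_{K/F}(k)=1$, and such maps are inner by Proposition \ref{prop:estimate}, which settles the inner assertion. A short computation gives $H_{\mathrm{id},k}\circ H_{\mathrm{id},l}=H_{\mathrm{id},kl}$, while comparing the coefficients of $t$ shows that $H_{\mathrm{id},k}=H_{\mathrm{id},l}$ forces $k=l$; thus $k\mapsto H_{\mathrm{id},k}$ is an injective homomorphism realising $\mathrm{Aut}_F(S_g)$ as a subgroup of $\ker(N_{K/F})$. As $\ker(N_{K/F})$ is cyclic of order $s=(q^m-1)/(q-1)$, the group $\mathrm{Aut}_F(S_g)$ is cyclic of order at most $s$. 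Finally, if $\ker(N_{K/F})$ has prime order its only subgroups are $\{1\}$ and itself, yielding the stated dichotomy $\mathrm{Aut}_F(S_g)=\{\mathrm{id}\}$ or $\mathrm{Aut}_F(S_g)\cong\ker(N_{K/F})$.
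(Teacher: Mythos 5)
Your reconstruction matches the intended derivation: the paper offers no explicit proof of this corollary (the companion corollary before it cites \cite[Corollary 25]{BP}, this one is stated bare), and the route it presupposes is exactly yours --- normal form $H_{\sigma^j,k}$ via $n=m\geq m-1$ and the results of Section \ref{sec:semifields}, the norm computation $N_{K/F}(k)^m=1$ combined with $\gcd(m,q-1)=1$ to get $N_{K/F}(k)=1$ and $\sigma^j(a_0)=a_0$, innerness of the $H_{id,k}$ with $N_{K/F}(k)=1$ via Proposition \ref{prop:estimate}, and the injective homomorphism $k\mapsto H_{id,k}$ into $\ker(N_{K/F})$. All of those steps are correct. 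The step you flag is indeed the crux, and your suspicion is justified in the strongest possible sense: as literally stated the corollary is \emph{false} for composite $m$, so your strengthened hypothesis is not a cautious weakening but a necessary repair.

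Concretely, take $q=2$, $m=n=4$, $F=\mathbb{F}_2$, $K=\mathbb{F}_{16}$, $\sigma(x)=x^2$, and $\omega\in\mathbb{F}_4\setminus\mathbb{F}_2$, and let $g(t)=t^4+t^2+\omega$ (signs are immaterial in characteristic $2$), so $a_0=\omega\in K\setminus F$ and $\gcd(m,q-1)=1$. One checks $g$ is irreducible in $K[t;\sigma]$: a right factor $t-b$ requires $b^3=1+\omega$ and a left factor $t-c$ requires $c^9=1+\omega$, both impossible since $b^3$ and $c^9$ range over the order-$5$ subgroup of $K^\times$ while $1+\omega=\omega^2$ has order $3$; and matching coefficients in $g=(t^2+\beta t+\gamma)(t^2+\delta t+\epsilon)$ forces either $\delta\neq 0$ and $(\delta\epsilon)^3=\omega$ (again impossible) or $\delta=\beta=0$ and $\epsilon^5+\epsilon=\omega$, which forces $\epsilon\in\mathbb{F}_4^\times$, where $\epsilon^5+\epsilon=\epsilon^2+\epsilon\in\{0,1\}$. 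Yet all coefficients of $g$ lie in $\mathbb{F}_4={\rm Fix}(\sigma^2)$, so $H_{\sigma^2,1}$ satisfies Equation (\ref{eqn:neccessary}) with $k=1$ and is an automorphism of $S_g$ of order $2$ acting as $\sigma^2\neq id$ on $K$; since $\ker(N_{K/F})\cong\mathbb{Z}/15\mathbb{Z}$ has odd order, every claim of the corollary fails for this $g$. The intended reading is that the section's standing hypothesis --- $f(t)=t^m-a_0$ irreducible --- remains in force, which is exactly your proposed condition: your parenthetical equivalence is also correct over finite fields, since if $a_0\in\mathbb{F}_{q^d}$ with $d\mid m$, $d<m$, the surjectivity of $N_{K/\mathbb{F}_{q^d}}$ produces $b$ with $t^d-b$ right dividing $t^m-a_0$ (and conversely the paper already notes irreducibility when $a_0$ lies in no proper subfield). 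Under that hypothesis $a_0$ generates $K$ over $F$, so $\sigma^j(a_0)=a_0$ forces $j=0$ and your proof is complete; and, as you correctly observe, the final dichotomy clause is unconditionally fine, because $|\ker(N_{K/F})|=(q^m-1)/(q-1)$ prime forces $m$ prime, where ${\rm Fix}(\sigma^j)=F$ for all $j\not\equiv 0 \bmod m$ and no repair is needed.
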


We can also rephrase our results as follows:

\begin{proposition}\label{prop:t^m-a_automorphism_field finite}
Let $\alpha$ be a primitive element of $K$, i.e. $K^{\times} =
\langle\alpha\rangle$.
\\ (i) $\langle G_{\alpha}\rangle$ is a cyclic subgroup of ${\rm Aut}_F(A)$ of order $s =(q^m-1)/(q-1)$,
containing inner automorphisms.
\\  (ii) Suppose one of the following holds:
\\ (a)
 $m $ and $(q-1)$ are coprime.
\\ (b)  $m$ is prime and $F$  a field where $m$
is coprime to the characteristic of $F$, containing a primitive $m$th
root of unity. Let $K = F(d)$ be a cyclic field extension of $F$ of
degree $m$. Let  $a \in K \setminus F$ and
 $a\not=\lambda d^i$ for every $i \in\{ 0, \ldots, m-1\}$, $\lambda\in F^\times$.
\\ Then
${\rm Aut}_F(A)=\langle G_{\alpha}\rangle.$
\end{proposition}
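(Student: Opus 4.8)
The plan is to establish (i) by realizing $\langle G_{\alpha}\rangle$ as the full group of inner automorphisms of $A$ and computing its order, and then to deduce (ii) by invoking the two corollaries following Theorem \ref{thm:automorphism_of_Sf_field_caseIII}, which under either hypothesis (a) or (b) force every $F$-automorphism to fix $K$ and hence be inner.

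For part (i), recall from Section \ref{subsec:nonassalgs} that, since $\mathbb{N}(A)=K\cdot 1$ is commutative, each $G_c(x)=c^{-1}xc$ with $c\in K^\times$ is a well-defined inner automorphism extending $id_K$. First I would check that $c\mapsto G_c$ is a group homomorphism $K^\times\to {\rm Aut}_F(A)$: using that nuclear elements associate with and commute with everything, one computes $G_c\circ G_d=G_{cd}$. Its kernel is $\{c\in K^\times\,|\,c^{-1}xc=x \text{ for all }x\}={\rm C}(A)^\times$, and since $\alpha t=t\alpha$ forces $\sigma(\alpha)=\alpha$ one has ${\rm C}(A)=F\cdot 1$, so the kernel is $F^\times$. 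As $\alpha$ generates $K^\times$, the image $\langle G_\alpha\rangle$ coincides with the whole set $\{G_c\,|\,c\in K^\times\}$ of inner automorphisms and is cyclic of order $|K^\times/F^\times|=(q^m-1)/(q-1)=s$. (Equivalently, this is just Theorem \ref{cor:innerII}: the $s$ inner automorphisms of $A$ form a cyclic group $\cong{\rm ker}(N_{K/F})$, and $\langle G_\alpha\rangle$, being the image of the generator $\alpha$ of $K^\times$, is all of it.)

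For part (ii), it remains only to rule out automorphisms that move $K$. Under hypothesis (a), $m$ and $q-1$ are coprime, which is equivalent to $F=\mathbb{F}_q$ containing no nontrivial $m$th root of unity; applying the second Corollary following Theorem \ref{thm:automorphism_of_Sf_field_caseIII} to $g=f=t^m-a$ with $a\in K\setminus F$ shows every $F$-automorphism of $A$ fixes $K$ and is inner. Under hypothesis (b), the conditions ($m$ prime and coprime to ${\rm char}(F)$, $F$ containing a primitive $m$th root of unity, $K=F(d)$ with $d$ a root of some $t^m-c\in F[t]$, and $a\neq\lambda d^i$ for all $\lambda\in F^\times$ and $0\le i\le m-1$) are precisely those of the first Corollary following Theorem \ref{thm:automorphism_of_Sf_field_caseIII}, which again forces every $F$-automorphism of $A$ to fix $K$ and be inner. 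In either case ${\rm Aut}_F(A)$ consists entirely of inner automorphisms, which by (i) form the group $\langle G_\alpha\rangle$; hence ${\rm Aut}_F(A)=\langle G_\alpha\rangle$.

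The routine verifications (that $c\mapsto G_c$ is a homomorphism and that ${\rm C}(A)=F\cdot 1$) are straightforward nucleus manipulations. The only real content beyond this bookkeeping lies in matching hypotheses (a) and (b) to the precise statements of the two corollaries — in particular recognizing that ``$m$ and $q-1$ coprime'' is the same as ``$F$ has no nontrivial $m$th root of unity,'' and that the exclusion $a\neq\lambda d^i$ is exactly what prevents any Galois automorphism $\sigma^j\neq id$ from extending to $A$. That identification is the main obstacle; once it is in place, both parts follow directly from results already established.
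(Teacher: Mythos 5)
Your proposal is correct and follows essentially the same route as the paper: the paper's proof simply notes that $F^{\times}=\langle\alpha^{s}\rangle$ (so $\alpha^{j}\notin F$ for $0<j<s$) and cites \cite[Theorem 21(iii)]{BP}, whose content is exactly what you reconstruct — the inner automorphisms $G_c$, $c\in K^{\times}$, give a cyclic group $\cong K^{\times}/F^{\times}\cong\ker(N_{K/F})$ of order $s$ generated by $G_{\alpha}$, and hypotheses (a), (b) (via the two corollaries after Theorem \ref{thm:automorphism_of_Sf_field_caseIII}) rule out extensions of any $\sigma^{j}\neq id$, so every automorphism is inner. Your version is merely a more self-contained unpacking of the citation, with all steps (homomorphism $c\mapsto G_c$, kernel $F^{\times}$ since ${\rm C}(A)=F\cdot 1$, and the matching of hypotheses to the corollaries) checking out.
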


\begin{proof}
If $K^{\times} = \langle\alpha\rangle$ then $F^{\times} = \langle\alpha^s\rangle$ for $s =(q^m-1)/(q-1)$.
 In particular $\alpha^s \in F^{\times}$ but $\alpha^j \notin F$ for all smaller $j$.
 The result now follows from   \cite[Theorem 21(iii)]{BP}
\end{proof}

For more general choices of twisted polynomials this means:

\begin{theorem}
With the assumptions of Proposition \ref{prop:t^m-a_automorphism_field finite} (ii) on
$K/F$ and $a$, for each irreducible
 $$g(t)=t^m-\sum_{i=0}^{m-1}a_it^i \text{ with }a_0=a\in K\setminus F,$$
${\rm Aut}_F(S_g)$ is a subgroup of ${\rm ker}(N_{K/F})$ and
therefore cyclic of order at most $s =(q^m-1)/(q-1)$.
\end{theorem}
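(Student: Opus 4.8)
The plan is to obtain this theorem as a short consequence of the two results it cites, by reducing the general polynomial $g$ to the nonassociative cyclic algebra $A=(K/F,\sigma,a)$ and then quoting the already-established structure of ${\rm Aut}_F(A)$.

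First I would set $f(t)=t^m-a\in K[t;\sigma]$, so that $S_f=A$, and record that throughout the present section $\sigma$ has order $n=m$, whence $n\geq m-1$. Since $a_0=a\in K\setminus F$, the Remark in Section~\ref{sec:prel} guarantees that $g$ is automatically not right invariant, and $g$ is irreducible by hypothesis, so $S_g$ is a proper semifield and ${\rm Aut}_F(S_g)$ is meaningful. I would then apply Proposition~\ref{cor:Sandler}(ii): because $a_0=a$ and $n\geq m-1$, it gives that ${\rm Aut}_F(S_g)$ is, as a set of $F$-linear maps of the common underlying space $R_m$ shared by $S_g$ and $S_f$, a subgroup of ${\rm Aut}_F(S_f)={\rm Aut}_F(A)$.

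Next I would pin down ${\rm Aut}_F(A)$ exactly. Under hypotheses (a) or (b), carried over verbatim from Proposition~\ref{prop:t^m-a_automorphism_field finite}(ii), that proposition yields ${\rm Aut}_F(A)=\langle G_\alpha\rangle$, a cyclic group of order $s=(q^m-1)/(q-1)$ consisting of inner automorphisms that all extend $id_K$. Combining this with Theorem~\ref{cor:innerII} (equivalently the inner-automorphism count of Proposition~\ref{prop:innerII}), these $s$ inner automorphisms are precisely the cyclic subgroup isomorphic to ${\rm ker}(N_{K/F})$; since $\langle G_\alpha\rangle$ already has order $s$, the two coincide and ${\rm Aut}_F(A)\cong {\rm ker}(N_{K/F})$.

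Finally I would conclude: ${\rm Aut}_F(S_g)$ is a subgroup of the cyclic group ${\rm Aut}_F(A)\cong {\rm ker}(N_{K/F})$ of order $s$, and a subgroup of a cyclic group is cyclic with order dividing $s$ by Lagrange. Hence ${\rm Aut}_F(S_g)$ is cyclic of order at most $s$ and embeds into ${\rm ker}(N_{K/F})$, which is exactly the assertion. There is no genuinely hard step here; the only points demanding care are checking that the inclusion supplied by Proposition~\ref{cor:Sandler}(ii) is an honest inclusion of automorphism groups realized on the shared vector space $R_m$ (so that ``subgroup'' is literal, the identification with ${\rm ker}(N_{K/F})$ being only up to isomorphism), and confirming that the hypotheses of Proposition~\ref{prop:t^m-a_automorphism_field finite}(ii) on $K/F$ and $a$ transfer to the choice $a=a_0$, which they do by assumption.
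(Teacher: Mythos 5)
Your proposal is correct and follows essentially the same route as the paper, which simply cites Theorem \ref{Aut(S_f) subgroup corollary} for the containment ${\rm Aut}_F(S_g)\subset{\rm Aut}_F(S_f)$ (your appeal to Proposition \ref{cor:Sandler}(ii) is the same subgroup result from \cite{BP} specialized to $f(t)=t^m-a$, valid here since $n=m\geq m-1$) and then implicitly combines it with Proposition \ref{prop:t^m-a_automorphism_field finite}(ii) and Theorem \ref{cor:innerII} to identify ${\rm Aut}_F(A)=\langle G_\alpha\rangle\cong{\rm ker}(N_{K/F})$. Your write-up just makes these citations, the non-right-invariance of $g$, and the up-to-isomorphism reading of ``subgroup of ${\rm ker}(N_{K/F})$'' explicit, which the paper leaves tacit.
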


This is a consequence of Theorem \ref{Aut(S_f) subgroup corollary}.

\subsection{The automorphism groups of some nonassociative cyclic algebras}

 In this subsection, we assume that  $F$ is a field where $m$ is coprime to the
characteristic of $F$, and that $F$ contains a primitive $m$th root of unity
$\omega$, so that $K = F(d)$. Let $s=(q^m-1)/(q-1)$.

\begin{lemma} \label{lem:n divides s}
Suppose $m \vert (q-1)$ then:
\\ (i)  $m \vert s$.
\\ (ii)  If $m$ is odd then $m^2 \nmid (ls)$ for all $l \in \{ 1, \ldots, m-1 \}$.
\\ (iii) If $(q-1)/m$ is even then $m^2 \nmid (ls)$ for all $l \in \{ 1, \ldots, m-1 \}$.
\end{lemma}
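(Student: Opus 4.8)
The plan is to work throughout with the definition $s=(q^m-1)/(q-1)=1+q+q^2+\cdots+q^{m-1}$ and to exploit the hypothesis $m\mid(q-1)$ by reducing everything modulo $m$ and modulo $m^2$. The key elementary observation is that $q\equiv 1\pmod m$, so each summand $q^j\equiv 1\pmod m$, whence $s\equiv m\equiv 0\pmod m$; this proves part (i). To get the finer statements (ii) and (iii), I would refine this to a congruence modulo $m^2$. First I would write $q=1+mu$ for some integer $u\geq 1$ (using $m\mid(q-1)$), and expand $q^j=(1+mu)^j\equiv 1+jmu\pmod{m^2}$ by the binomial theorem. Summing over $j=0,\dots,m-1$ gives
\begin{equation}\label{eqn:plan_scongruence}
s=\sum_{j=0}^{m-1}q^j\equiv m+mu\sum_{j=0}^{m-1}j = m+mu\cdot\frac{m(m-1)}{2}\pmod{m^2}.
\end{equation}

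From \eqref{eqn:plan_scongruence} the strategy is to determine exactly the power of $m$ dividing $s$. The second term $m\cdot u\cdot m(m-1)/2$ is divisible by $m^2$ precisely when $m\mid u(m-1)/2$. Writing $s\equiv m\bigl(1+u(m-1)m/2/m\bigr)$ more carefully, I would factor out a single $m$ and ask whether the complementary factor $s/m\equiv 1+u\,m(m-1)/2 /m$ is divisible by $m$; the point is that $s/m\equiv 1\pmod m$ exactly when the correction term vanishes mod $m$, in which case $m^2\nmid s$. For (ii), if $m$ is odd then $(m-1)/2$ is an integer and $u\cdot m\cdot(m-1)/2$ contributes a factor of $m$, so $s\equiv m\pmod{m^2}$, giving $m^2\nmid s$; since $\gcd(l,m)$ considerations show $\ell s$ for $l\in\{1,\dots,m-1\}$ still has exactly one factor of $m$ (as $m\nmid l$), we conclude $m^2\nmid(ls)$. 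For (iii), when $(q-1)/m=u$ is even, the term $u\,m(m-1)/2$ again becomes divisible by $m$ even when $m$ is even (the evenness of $u$ supplies the factor of $2$ needed to clear the denominator in $(m-1)/2$), yielding the same conclusion $s\equiv m\pmod{m^2}$ and hence $m^2\nmid(ls)$.

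The main obstacle I anticipate is the even case, i.e.\ handling the $(m-1)/2$ factor correctly when $m$ is even. When $m$ is even, $(m-1)$ is odd and the sum $\sum_{j=0}^{m-1}j=m(m-1)/2$ contributes a factor $m/2$ rather than $m$ to the correction term, so $s\equiv m+mu\cdot m(m-1)/2\equiv m\bigl(1+u(m-1)/2\cdot m\bigr)$ and one needs the extra factor of $2$ from $u$ being even to guarantee $m\mid u(m-1)m/(2)$ cleanly; this is exactly where hypothesis (iii) is used and where a careless computation would break. I would therefore treat the two cases separately, tracking the $2$-adic valuation alongside the $m$-divisibility. Finally, for all three parts the passage from $m^2\nmid s$ to $m^2\nmid(ls)$ for $l\in\{1,\dots,m-1\}$ is immediate: since such $l$ satisfy $m\nmid l$, multiplying by $l$ cannot raise the exact power of $m$ dividing $s$ from $1$ to $2$, so $v_m(ls)=v_m(l)+v_m(s)=0+1=1<2$.
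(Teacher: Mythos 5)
Your core computation is essentially the paper's: both arguments write $q=1+mu$ (the paper calls your $u$ by $r$), expand $q^j=(1+mu)^j\equiv 1+jmu \pmod{m^2}$ via the binomial theorem, and sum over $j$ to obtain $s\equiv m+m^2\cdot\frac{u(m-1)}{2}\pmod{m^2}$, with the hypotheses of (ii) and (iii) used exactly as you use them, namely to make $\frac{u(m-1)}{2}$ (in the paper, $\frac{lr(m-1)}{2}$) an integer so that the correction term dies modulo $m^2$. Your proof of (i) is actually more direct than the paper's, which establishes $(q-1)\mid\bigl(\sum_{i=0}^{m-1}q^i-m\bigr)$ by induction on $m$ before invoking $m\mid(q-1)$; reducing each $q^j$ modulo $m$, as you do, gives $s\equiv m\equiv 0\pmod m$ in one line.

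However, your final step contains a genuine flaw: the claim that $m\nmid l$ and ``$s$ has exactly one factor of $m$'' force $v_m(ls)=v_m(l)+v_m(s)=1$ is false, because $m$ is not assumed prime here and the ``$m$-adic valuation'' is not additive for composite $m$. Knowing only that $m\mid s_0$, $m^2\nmid s_0$ and $m\nmid l$ does not prevent $m^2\mid ls_0$: take $m=4$, $s_0=8$, $l=2$, where $4\mid 8$, $16\nmid 8$ and $4\nmid 2$, yet $ls_0=16$ is divisible by $m^2=16$. This matters, since the lemma is invoked in Theorem \ref{thm:Automorphisms of nonassociative cyclic algebras over finite fields} with no primality assumption on $m$. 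The repair is immediate from what you have already established: your congruence $s\equiv m\pmod{m^2}$ is strictly stronger than $m^2\nmid s$, and multiplying it by $l$ gives $ls\equiv lm\pmod{m^2}$; since $1\leq l\leq m-1$ forces $0<lm<m^2$, it follows that $m^2\nmid(ls)$. This is precisely how the paper avoids the issue: it carries the factor $l$ through the entire mod-$m^2$ computation and concludes $ls\equiv lm\not\equiv 0\pmod{m^2}$, never appealing to valuations. With that one step rewritten (and the muddled intermediate expression $s\equiv m\bigl(1+u(m-1)m/2/m\bigr)$ cleaned up), your argument is correct and matches the paper's.
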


\begin{proof}
(i) We prove first that
\begin{equation} \label{eqn:Lemma:n divides s I}
(q-1) \vert \Big( \big( \sum_{i=0}^{m-1} q^i \big) - m \Big)
\end{equation}
for all $m \geq 2$ by induction:

Clearly \eqref{eqn:Lemma:n divides s I} holds for $m = 2$. Suppose \eqref{eqn:Lemma:n divides s I} holds for some $m \geq 2$, then
\begin{equation} \label{eqn:Lemma:n divides s II}
\big( \sum_{i=0}^{m} q^i \big) - (m+1) =  \big( \sum_{i=0}^{m-1} q^i \big) - m + q^m - 1 =  \big( \sum_{i=0}^{m-1} q^i \big) - m +  \big( \sum_{i=0}^{m-1} q^i \big) (q-1).
\end{equation}
Now, $(q-1) \vert \Big( \big( \sum_{i=0}^{m-1} q^i \big) - m \Big)$ and so \eqref{eqn:Lemma:n divides s I} holds by induction. In particular
$$m \vert \Big( \big( \sum_{i=0}^{m-1} q^i \big) - m \Big),$$
therefore $m$ divides
$\big( \sum_{i=0}^{m-1} q^i \big) - m + m = s$
as required.
\\ (ii) and (iii):
Write $q = 1 + rm$ for some $r \in \mathbb{N}$, then
\begin{align*}
q^j &= (1+rm)^j = \sum_{i=0}^j \binom{j}{i} (rm)^i \equiv \sum_{i=0}^1 \binom{j}{i} (rm)^i \ \mathrm{ mod} \ (m^2) \\ & \equiv (1 + jrm) \ \mathrm{ mod} \ (m^2)
\end{align*}
for all $j \geq 1$. Therefore
\begin{align*}
ls &= l \sum_{j=0}^{m-1} q^j \equiv l \big( 1 + \sum_{j=1}^{m-1}(1+jrm) \big) \ \mathrm{ mod} \ (m^2) \\ & \equiv \Big( lm + lrm \frac{(m-1)m}{2} \Big) \ \mathrm{ mod} \ (m^2),
\end{align*}
for all $l \in \{ 1, \ldots, m-1 \}$.
If $m$ is odd or $r=(q-1)/m$ is even then
$$\frac{lr(m-1)}{2} \in \mathbb{Z}$$
which means
$$ls \equiv lm \ \mathrm{ mod} \ (m^2) \not\equiv 0 \ \mathrm{ mod} \ (m^2),$$
that is, $m^2 \nmid (ls)$ for all $l \in \{ 1, \ldots, m-1 \}$.
\end{proof}

 Recall that the semidirect product
$$\mathbb{Z} / m \mathbb{Z} \rtimes_l
\mathbb{Z} / n \mathbb{Z} = \langle x,y \,|\,x^m = 1, y^n= 1, yxy^{-1}=x^l\rangle$$
of $\mathbb{Z} / m \mathbb{Z}$ and $\mathbb{Z} / n \mathbb{Z} $
corresponds to the choice of an integer $l$ with $l^n
\equiv 1 \text{ mod } m$.
 Let $A = (K/F,\sigma,a)$ where $a = \lambda d^i$ for some
$i \in \{ 1 , \ldots, m-1 \}$, $\lambda \in F^{\times}$.

\begin{theorem} \label{thm:Automorphisms of nonassociative cyclic algebras over finite fields}
Suppose $m$ is odd or $r=(q-1)/m$ is even. Then $\mathrm{Aut}_F(A)$ is a group of order
$ms$ and contains a subgroup isomorphic to the semidirect product
\begin{equation} \label{eqn:Automorphisms of nonassociative cyclic algebras over finite fields semidirect not nec prime}
\mathbb{Z} / \Big( \frac{s}{m} \Big) \mathbb{Z} \rtimes_{q} \mathbb{Z} / (m \mu) \mathbb{Z},
\end{equation}
where $\mu = m/\mathrm{gcd}(i,m)$.
Moreover, if $i$ and $m$ are coprime, then
\begin{equation} \label{eqn:Automorphisms of nonassociative cyclic algebras over finite fields semidirect not nec prime 2}
\mathrm{Aut}_F(A) \cong \mathbb{Z} / \Big( \frac{s}{m} \Big) \mathbb{Z} \rtimes_{q} \mathbb{Z} / (m^2) \mathbb{Z}.
\end{equation}
\end{theorem}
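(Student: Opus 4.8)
The plan is to realize $\mathrm{Aut}_F(A)$ as an extension of $\mathrm{Gal}(K/F)$ by the cyclic group of inner automorphisms, and then to split off an explicit complement using the arithmetic supplied by Lemma \ref{lem:n divides s}. Since $n=m\ge m-1$, every $H\in\mathrm{Aut}_F(A)$ preserves the nucleus $\mathbb{N}(A)=K\cdot 1$, hence restricts to some $\sigma^j\in\mathrm{Gal}(K/F)$ and has the form $H_{\sigma^j,k}$ with $\sigma^j(a)=N_{K/F}(k)\,a$ (Proposition \ref{cor:Sandler}, \cite[Theorems 4,5]{BP}). As $d^m=c\in F$ and $\sigma$ has order $m$, the quotient $\sigma(d)/d$ is a primitive $m$th root of unity, which I would normalise to be $\omega$, so that $\sigma^j(a)=\sigma^j(\lambda d^i)=\omega^{ij}a$ and the condition becomes $N_{K/F}(k)=\omega^{ij}$. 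Because $N_{K/F}$ is surjective and $\omega^{ij}\in F^\times$, this is solvable for every $j$, with exactly $s$ solutions $k$ (a coset of $\mathrm{ker}(N_{K/F})$), and distinct pairs $(\sigma^j,k)$ give distinct maps. Hence the restriction map $\pi\colon\mathrm{Aut}_F(A)\to\mathrm{Gal}(K/F)$ is onto with kernel the inner automorphisms $\{H_{id,u}\mid u\in\mathrm{ker}(N_{K/F})\}\cong\mathbb{Z}/s\mathbb{Z}$ (Theorem \ref{cor:innerII}); this already yields $|\mathrm{Aut}_F(A)|=ms$.

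Next I would record the multiplication and conjugation laws. A direct computation gives $H_{\sigma^j,k}\circ H_{\sigma^{j'},k'}=H_{\sigma^{j+j'},\,k\,\sigma^j(k')}$, from which $H_{\sigma^j,k}\circ H_{id,u}\circ H_{\sigma^j,k}^{-1}=H_{id,\sigma^j(u)}$; thus $\mathrm{Aut}_F(A)$ acts on the cyclic normal subgroup $C=\mathrm{ker}(N_{K/F})$ of inner automorphisms through $\mathrm{Gal}(K/F)$, i.e.\ by the $q$-power map $u\mapsto u^q$. I would then fix an extension $T=H_{\sigma,k}$ of $\sigma$ and compute $T^j=H_{\sigma^j,\,k\sigma(k)\cdots\sigma^{j-1}(k)}$ by induction, so that $T^m=H_{id,N_{K/F}(k)}=H_{id,\omega^i}$. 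Since $\omega^i$ has order $\mu=m/\mathrm{gcd}(i,m)$ in $C$, the element $T$ has order $m\mu$ and $\langle T\rangle\cap C=\langle H_{id,\omega^i}\rangle$ has order $\mu$.

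Finally I would produce the complement. By Lemma \ref{lem:n divides s}(i) we have $m\mid s$, so $C$ contains a unique (hence characteristic) subgroup $C_0$ of order $s/m$, which is therefore normal in $\mathrm{Aut}_F(A)$. The key arithmetic input is that $\mathrm{gcd}(s/m,m)=1$: if a prime $p$ divided both, then $l=m/p\in\{1,\dots,m-1\}$ would satisfy $ls=m^2\,(s/m)/p\equiv 0\ (\mathrm{mod}\ m^2)$, contradicting Lemma \ref{lem:n divides s}(ii)/(iii), which applies precisely under the hypothesis that $m$ is odd or $r=(q-1)/m$ is even. Since $\mu\mid m$, it follows that $\mathrm{gcd}(s/m,\mu)=1$, whence $C_0\cap\langle T\rangle=C_0\cap\langle H_{id,\omega^i}\rangle=1$. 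Therefore $C_0\langle T\rangle=C_0\rtimes\langle T\rangle$ is a subgroup of order $(s/m)(m\mu)=s\mu$, and the conjugation law identifies it with $\mathbb{Z}/(s/m)\mathbb{Z}\rtimes_q\mathbb{Z}/(m\mu)\mathbb{Z}$ (the relation $q^{m}\equiv 1\ (\mathrm{mod}\ s/m)$ holds because $s/m\mid q^m-1$). When $\mathrm{gcd}(i,m)=1$ we have $\mu=m$, so this subgroup has order $sm=|\mathrm{Aut}_F(A)|$ and equals the whole group, giving the isomorphism $\mathrm{Aut}_F(A)\cong\mathbb{Z}/(s/m)\mathbb{Z}\rtimes_q\mathbb{Z}/(m^2)\mathbb{Z}$.

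I expect the main obstacle to be the step producing $\mathrm{gcd}(s/m,m)=1$: this is exactly where the oddness/parity hypothesis is consumed through Lemma \ref{lem:n divides s}, and without it $C_0$ and $\langle T\rangle$ can overlap, so the extension need not split as stated. The remaining verifications — surjectivity of $\pi$, the exact count $|\mathrm{Aut}_F(A)|=ms$, and that the conjugation action is multiplication by $q$ — are routine once the composition law is in hand.
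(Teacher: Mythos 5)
Your proposal is correct and follows essentially the same route as the paper's proof: the order count $|\mathrm{Aut}_F(A)|=ms$ via Proposition \ref{cor:aut_nonass_cyclicII}, the cyclic subgroup of order $m\mu$ generated by a lift $T=H_{\tau,k}$ with $T^m=H_{id,\omega^i}$, the inner subgroup of order $s/m$ (your characteristic subgroup $C_0$ of $\ker(N_{K/F})$ is exactly the paper's $\langle G_{\alpha^m}\rangle$), the conjugation relation giving the $q$-power action, and the trivial-intersection step consuming Lemma \ref{lem:n divides s}(ii)/(iii). Your only deviation is cosmetic: you extract $\gcd(s/m,m)=1$ directly from the lemma by taking $l=m/p$, whereas the paper argues by contradiction from $H_{id,\omega^l}\in\langle G_{\alpha^m}\rangle$ forcing $m^2\mid s\gcd(l,m)$ --- the same arithmetic, packaged differently.
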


\begin{proof}
Let $\tau: K \rightarrow K, \ k \mapsto k^q$, then
$$\tau^j(a) = \omega^{ij}a,$$
for all $j \in \{ 0, \ldots, m-1 \}$ where $\omega \in F^{\times}$ is a primitive
$m^{\text{th}}$ root of unity by \cite[Lemma 23]{BP}.
As $\tau$ generates $\mathrm{Gal}(K/F)$, the automorphisms of $A$ are precisely the maps $H_{\tau^j,k}$,
where $j \in \{ 0, \ldots, m-1 \}$ and $k \in K^{\times}$ are such that $\tau^j(a) = N_{K/F}(k)a$ by
Proposition \ref{cor:aut_nonass_cyclicII}. Moreover there are exactly $s$ elements $k \in K^{\times}$ with
$N_{K/F}(k) = \omega^{ij}$ by Proposition \ref{cor:aut_nonass_cyclicII}, and each of these elements corresponds to a unique automorphism of $A$. Therefore $\mathrm{Aut}_F(A)$ is a group of order $ms$.

Choose $k \in K^{\times}$ such that $N_{K/F}(k) = \omega^i$ so that $H_{\tau,k} \in \mathrm{Aut}_F(S_f)$.
 As $\tau$ has order $m$, $H_{\tau,k} \circ \ldots \circ H_{\tau,k}$ ($m$-times) becomes
 $H_{id,b}$ where $b = \omega^i = N_{K/F}(k)$. Notice $\omega^i$ is a primitive $\mu^{\text{th}}$ root
 of unity where $\mu = m/\mathrm{gcd}(i,m)$, then $H_{id,b}$ has order $\mu$ and so the subgroup of
 $\mathrm{Aut}_F(S_f)$ generated by $H_{\tau,k}$ has order $m \mu$.

$\langle G_{\alpha}\rangle$ is a cyclic subgroup of $\mathrm{Aut}_F(S_f)$ of order $s$ by  Proposition \ref{prop:t^m-a_automorphism_field finite}
where $\alpha$ is a primitive element of $K$. Furthermore, $m \vert s$ by Lemma \ref{lem:n divides s} and so
$\langle G_{\alpha^m}\rangle$ is a cyclic subgroup of $\mathrm{Aut}_F(A)$ of order $s/m$. We will prove $\mathrm{Aut}_F(A)$
contains the semidirect product
\begin{equation} \label{eqn:form of <H_sigma,k> automorphisms of nonassociative cyclic over finite fields 2}
\langle G_{\alpha^m}\rangle \rtimes_q \langle H_{\tau,k}\rangle:
\end{equation}
The inverse of $H_{\tau,k}$ in $\mathrm{Aut}_F(A)$ is $H_{\tau^{-1},\tau^{-1}(k^{-1})}$ and a tedious calculation shows that
$$H_{\tau,k} \circ G_{\alpha^m} \circ H_{\tau,k}^{-1} = G_{\alpha^{mq}} = (G_{\alpha^m})^q.$$

Notice $q^m = qs -s + 1$, i.e. $q^m \equiv 1 \text{ mod } s$, and so $q^{m \mu} \equiv 1 \text{ mod } s$. Then
$m \vert s$ by Lemma \ref{lem:n divides s}, hence $q^{m \mu} \equiv 1 \text{ mod } (s/m)$. In order to prove
\eqref{eqn:form of <H_sigma,k> automorphisms of nonassociative cyclic over finite fields 2}, we are left to show that
$\langle H_{\tau,k}\rangle \cap \langle G_{\alpha^m}\rangle = \{ \mathrm{id} \}$.

Suppose for contradiction $\langle H_{\tau,k}\rangle \cap \langle G_{\alpha^m}\rangle \neq \{ \mathrm{id} \}$, then
$H_{id,\omega^l} \in \langle G_{\alpha^m}\rangle$ for some $l \in \{ 1, \ldots, m-1 \}$. Therefore $\langle G_{\alpha^m}\rangle$
contains a subgroup of order $m/\mathrm{gcd}(l,m)$ generated by $H_{id, \omega^l}$ and so
$(m/\mathrm{gcd}(l,m)) \vert (s / m)$. This means $m^2 \vert (s \mathrm{gcd}(l,m))$, a contradiction by Lemma
\ref{lem:n divides s}.

Therefore $\mathrm{Aut}_F(A)$ contains the subgroup
$$\langle G_{\alpha^m}\rangle \rtimes_q \langle H_{\tau,k}\rangle \cong \mathbb{Z} / \Big( \frac{s}{m} \Big) \mathbb{Z} \rtimes_{q} \mathbb{Z} /
(m \mu) \mathbb{Z}.$$
If $\mathrm{gcd}(i,m) = 1$ this subgroup has order $ms$ and since $|\mathrm{Aut}_F(A)| = ms$,
this is all of $\mathrm{Aut}_F(A)$.
\end{proof}

\begin{theorem} \label{thm:semidirect}
Suppose  $m$ is prime
and $m \vert (q-1)$.
\begin{itemize}
\item[(i)] If $m = 2$ then $\mathrm{Aut}_F(A)$ is the dicyclic group $\mathrm{Dic}_r$ of order $4r = 2q + 2$.
\item[(ii)] If $m > 2$ then
\begin{equation} \label{eqn:Automorphisms of nonassociative cyclic algebras over finite fields semidirect}
\mathrm{Aut}_F(A) \cong \mathbb{Z} / \Big( \frac{s}{m} \Big) \mathbb{Z} \rtimes_{q} \mathbb{Z} / (m^2) \mathbb{Z}.
\end{equation}
\end{itemize}
\end{theorem}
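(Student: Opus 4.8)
The plan is to treat Theorem \ref{thm:semidirect} as the prime specialization of Theorem \ref{thm:Automorphisms of nonassociative cyclic algebras over finite fields}, whose hypotheses ($m$ odd or $r=(q-1)/m$ even) and conclusions have already done most of the work. When $m$ is an odd prime dividing $q-1$, the hypothesis ``$m$ odd'' of the earlier theorem is automatically satisfied, and the coprimality condition $\mathrm{gcd}(i,m)=1$ needed for the stronger conclusion \eqref{eqn:Automorphisms of nonassociative cyclic algebras over finite fields semidirect not nec prime 2} comes for free: since $m$ is prime and $a=\lambda d^i$ with $i\in\{1,\dots,m-1\}$, we have $1\le i\le m-1$ so $m\nmid i$, forcing $\mathrm{gcd}(i,m)=1$. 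Thus part (ii) should follow directly by quoting Theorem \ref{thm:Automorphisms of nonassociative cyclic algebras over finite fields}, which already delivers the isomorphism $\mathrm{Aut}_F(A)\cong \mathbb{Z}/(s/m)\mathbb{Z}\rtimes_q\mathbb{Z}/(m^2)\mathbb{Z}$.

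For part (i) the earlier theorem does not apply cleanly, since for $m=2$ one cannot invoke ``$m$ odd,'' and the conclusion is a different (dicyclic) group rather than a semidirect product. So I would argue $m=2$ separately. Here $A=(K/F,\sigma,a)$ is a nonassociative quaternion algebra with $K=\mathbb{F}_{q^2}$, $s=q+1$, and $a=\lambda d$ with $\lambda\in F^\times$. By Proposition \ref{cor:aut_nonass_cyclicII} and the order count in the proof of Theorem \ref{thm:Automorphisms of nonassociative cyclic algebras over finite fields}, $\mathrm{Aut}_F(A)$ has order $ms=2(q+1)=4r$ where $r=(q+1)/2$ (note $m=2\mid(q-1)$ forces $q$ odd, so $r\in\mathbb{Z}$). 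The subgroup $\langle G_\alpha\rangle\cong\mathbb{Z}/s\mathbb{Z}=\mathbb{Z}/(q+1)\mathbb{Z}$ of inner automorphisms from Proposition \ref{prop:t^m-a_automorphism_field finite} is cyclic of index $2$, hence normal. I would then pick an automorphism $H=H_{\tau,k}$ extending the nontrivial $\tau=\sigma$, with $N_{K/F}(k)=\omega=-1$ (the primitive $2$nd root of unity), and compute $H^2$ and the conjugation action $H\,G_\alpha\,H^{-1}$.

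The heart of the matter, and the step I expect to be the main obstacle, is identifying the group structure as precisely $\mathrm{Dic}_r$ rather than merely as an order-$4r$ group with a cyclic normal subgroup of index $2$. The dicyclic group $\mathrm{Dic}_r=\langle x,y\mid x^{2r}=1,\ y^2=x^r,\ yxy^{-1}=x^{-1}\rangle$ is pinned down by two facts: the conjugation inverts the cyclic subgroup, and the order-$2$ coset element squares not to the identity but to the unique involution $x^r$ in the cyclic part. So I would verify (a) that $H\,G_\alpha\,H^{-1}=G_\alpha^{\,q}$, and that $q\equiv -1\ \mathrm{mod}\ (q+1)$, giving the inversion relation $yxy^{-1}=x^{-1}$; and (b) the crucial squaring relation, namely that $H^2=H_{id,N_{K/F}(k)}=H_{id,-1}=G_\alpha^{\,r}$ equals the unique element of order $2$ in $\langle G_\alpha\rangle$ (using $\alpha^r$ maps to $-1$ up to the identification, since $H_{id,-1}$ is the inner automorphism attached to the square root of unity and $r=(q+1)/2$ is exactly the order-$2$ index in $\mathbb{Z}/(q+1)\mathbb{Z}$). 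Establishing $H^2\neq id$ but $H^2=G_\alpha^r$ is what distinguishes $\mathrm{Dic}_r$ from the dihedral group $D_{2r}$ of the same order, and getting the normalizations of $N_{K/F}(k)$ and the identification $H_{id,-1}=G_\alpha^{\,r}$ to line up is the delicate computational point. Once both relations are confirmed, the universal presentation of $\mathrm{Dic}_r$ forces $\mathrm{Aut}_F(A)\cong\mathrm{Dic}_r$.
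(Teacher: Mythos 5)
Your proposal is correct and follows essentially the same route as the paper: part (ii) is quoted directly from Theorem \ref{thm:Automorphisms of nonassociative cyclic algebras over finite fields} (with $\mathrm{gcd}(i,m)=1$ automatic for $m$ prime), and part (i) is handled separately by exhibiting $\langle G_\alpha\rangle$ of order $s=q+1=2r$ together with an $H_{\sigma,k}$, $N_{K/F}(k)=-1$, satisfying the dicyclic relations $H^2=G_\alpha^r$ and $H G_\alpha H^{-1}=G_\alpha^{-1}$. Your explicit verification of these relations (including $H^2=H_{id,-1}\neq id$, which rules out the dihedral group) is exactly the ``easy calculation'' the paper leaves to the reader.
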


\begin{proof}
(i) We already know that ${\rm Aut}_F(A)$ has order $ 2(q+1)$. Let $\alpha \in K$ be a primitive element.
Then $\langle G_{\alpha}\rangle$ is a subgroup of ${\rm Aut}_F(A)$ of order $s$ by Proposition
\ref{prop:t^m-a_automorphism_field finite}.
 Furthermore, since $\sigma(a) = -a$, there  are precisely $s = q+1$ automorphisms $H_{\sigma,k}$ where
 $k \in K$ is such that
 $N_{K/F}(k) = -1$.
Pick any such $k \in K$. Then an easy calculation shows that ${\rm Aut}_F(A) \cong Dic_r$, i.e. that
 $${\rm Aut}_F(A) = \langle H_{\sigma,k}, G_{\alpha} \ \vert \ G_{\alpha}^{2r} = 1, \ H_{\sigma,k}^2 = G_{\alpha}^r,
 \ H_{\sigma,k}^{-1} G_{\alpha} H_{\sigma,k} = G_{\alpha}^{-1} \rangle.$$
 (ii) follows immediately from Theorem \ref{thm:Automorphisms of nonassociative cyclic algebras over finite fields}.
\end{proof}

Recall that the smallest dicyclic group ${\rm Dic}_2$ of order $4r=8$ (this only occurs if $q=3$)
is isomorphic to the quaternion group.
More generally, when $r$ is a power of 2, the dicyclic group ${\rm Dic}_r$ of order
$4r=2q+2$ is isomorphic to the generalized quaternion group.

 Note that if $m=2$ and $4$ divides $s=q+1$ then ${\rm Aut}_F(A)$ is not a semidirect product,
 since in this case  $\langle H_{\sigma,k}\rangle\cap \langle G_{\alpha^2}\rangle\not=\{id\}$.

\begin{remark}
Let $F$ have  characteristic not 2 and $K= F(d)$ be a quadratic field extension of $F$, then $A=(K/F, \sigma, a)$ is
a \emph{nonassociative quaternion algebra}. Nonassociative quaternion algebras are up to isomorphism the only
  proper semifields of order $q^4$ with center $F1$ and nucleus containing $K \cdot 1$
\cite[Theorem 1]{W}. Although being not associative, they are closely related to associatve quaternion algebras, as their multiplication can be seen as a
 canonical generalization of the classical Cayley-Dickson doubling process used to construct quaternion algebras
 out of a separable quadratic field extension \cite{AP}. If $a\not=\lambda d$ for any  $\lambda\in F^\times$, then
 ${\rm Aut}_F(A)\cong \mathbb{Z}/(q+1) \mathbb{Z}$
  and all automorphisms are inner (as it is the case for classical quaternion algebras).
 If $a=\lambda d$ for some  $\lambda\in F^\times$, then ${\rm Aut}_F(A)$ is
the dicyclic group of order $2q+2$ (Theorem \ref{thm:semidirect}).
\end{remark}

%
%

\section{The automorphisms of Hughes-Kleinfeld and Knuth semifields} \label{sec:knuth}

Let $K/F$ be a  Galois field extension of degree $n$. Choose $\eta,
\mu\in K$ and a nontrivial automorphism $\sigma\in {\rm Aut}_F(K)$.
For  $x,y,u,v \in K$ the following four multiplications make the
$F$-vector space $K \oplus K$ into an algebra over $F$:
\begin{align*}
Kn_1: (x,y) \circ (u,v) &= (xu + \eta \sigma(v)\sigma^{-2}(y) , vx +y
\sigma(u) + \mu \sigma(v)\sigma^{-1}(y) ),\\
 Kn_2: (x,y) \circ(u,v) &= (xu + \eta \sigma^{-1}(v) \sigma^{-2}(y) , vx + y \sigma(u)+ \mu v\sigma^{-1}(y)),\\
 Kn_3: (x,y) \circ (u,v) &= (xu + \eta\sigma^{-1}(v)y , vx + y \sigma(u) + \mu vy),\\
 HK: (x,y) \circ (u,v) &= ( x u + \eta y \sigma(v) ,  x v + y \sigma(u) + \mu y \sigma(v)).
\end{align*}
The unital algebras  given by each of the above multiplications are
denoted $Kn_1(K, \sigma, \eta, \mu)$, $Kn_2(K, \sigma, \eta, \mu)$,
$Kn_3(K, \sigma, \eta, \mu)$ and $HK(K, \sigma, \eta, \mu)$,
respectively.  The first three algebras were defined by Knuth
 and the last one  by Hughes and Kleinfeld \cite{HK}, \cite{Kn65}. If $\sigma^2 = id$ and $\mu = 0$, they are the same algebra with multiplication
$(x,y) \circ (u,v) = (xu + \eta y \sigma(v), xv + y \sigma(u)).$
Each of the  algebras is a division algebra if and only if
$$f(t)= t^2-\mu t-\eta\in K[t;\sigma]$$ is irreducible \cite{HK}, \cite{Kn65}.  For
$F=\mathbb{F}_q$, $K=\mathbb{F}_{q^n}$ and irreducible $f(t)$ (i.e.
$\eta\not=0$), we thus obtain  semifields. Identifying $(u,v)$ with $u+tv$, we see that the Hugh-Kleinfeld algebra
$$HK(K, \tau,\eta,\mu)=S_f  \text{ with } f(t)= t^2-\mu t-\eta\in K[t;\tau]$$
is a Petit algebra and that
$$Kn_2(K, \sigma, \eta,\mu)=_fS  \text{ with } f(t)= t^2-\mu t-\eta\in K[t;\sigma],$$
hence is the opposite algebra of a suitable Petit algebra. Thus $Kn_2(K, \sigma, \eta,\mu)=S_g$ for $g(t)= t^2-\mu' t-\eta' \in
K[t;\sigma^{-1}]$ for some suitable $\mu' ,\eta'\in K$ by \cite[Corollary 4]{LS}. Thus
 the automorphisms for any $Kn_2(K, \sigma, \eta,\mu) $
will be the same as for a Petit algebra
 $ HK(K, \sigma^{-1}, \eta',\mu')$.

 Suppose that either $\sigma^2 \neq id$ or that $\mu \neq 0$.
Then the following is well-known (cf. \cite{HK}, \cite{Kn65}):
\begin{itemize}
\item $K \cdot 1$ is not contained in the left, right or middle nucleus of $Kn_1(L, \sigma, \eta, \mu)$.
\item $\mathbb{N}_m(A)=\mathbb{N}_r(A)=K \cdot 1$ and $\mathbb{N}_l(A)\cong\mathbb{F}_{q^2}$  for $A=Kn_2(K, \sigma,\eta, \mu)$
\item $\mathbb{N}_l(A)=\mathbb{N}_r(A)=K \cdot 1$ for  $A=Kn_3(K, \sigma, \eta, \mu)$ but $K \cdot 1$ is not contained in the middle nucleus.
\end{itemize}

Hence $Kn_1(K, \sigma, \eta, \mu)$, $Kn_2(K, \sigma, \eta, \mu)$,
$Kn_3(K, \sigma, \eta, \mu)$ and $HK(K, \sigma, \eta, \mu)$ are mutually
non-isomorphic algebras.

We now describe all automorphisms for the algebras  $HK(K,
\sigma, \eta, \mu)$ (hence also for $Kn_2(K, \sigma, \eta, \mu)$) and $Kn_3(K,
\sigma, \eta, \mu)$. We also exhibit some automorphisms for the
algebra  $Kn_1(K, \sigma, \eta, \mu)$. This complements and improves the results in \cite{W09}.

\begin{theorem}
(i) \label{hkauto}
 All automorphisms of the Petit algebra $A=HK(K, \sigma, \eta, \mu)$ are of the form
$$H_{\tau,k}(x_0+x_1t)= \tau(x_0)+ k\tau(x_1)t$$
where $\tau \in {\rm Aut}_F(K)$ and $k \in K^\times$ such that
$ \eta k \sigma(k) = \tau(\eta) \text{ and } \mu \sigma(k) = \tau(\mu).$
\\ (ii) \label{kn3auto}
All automorphisms of $Kn_3(K, \sigma, \eta, \mu)$ are of the form
$$H_{\tau,k}(x_0+x_1t)= \tau(x_0)+ k\tau(x_1),$$
where $\tau \in {\rm Aut}_F(K)$ and $k \in K^{\times}$  such that
$ \eta \sigma^{-1}(k) \sigma^{-2}(k) = \tau(\eta)$ and  $\mu \sigma^{-1}(k) = \tau(\mu)$.
\\ In both (i) and (ii),  $N_{K/F}( k) =\pm 1 $ and if $\mu\not=0$, even  $N_{K/F}(k) = 1 $.
\end{theorem}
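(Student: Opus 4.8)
The plan is to treat the two parts separately, because $HK(K,\sigma,\eta,\mu)$ is exactly the Petit algebra $S_f$ with $f(t)=t^2-\mu t-\eta\in K[t;\sigma]$, so part (i) is a specialisation of the general machinery, whereas $Kn_3(K,\sigma,\eta,\mu)$ is neither a Petit algebra nor the opposite of one: its nucleus pattern $\mathbb{N}_l=\mathbb{N}_r=K\cdot 1$ (with $K\cdot 1$ not in the middle nucleus) differs from that of $S_f$, so part (ii) must be proved by a direct calculation in the given multiplication. The two norm assertions at the end will then follow uniformly from the two pairs of defining relations by applying $N_{K/F}$.

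For (i), I would write $f(t)=t^2-a_1t-a_0$, so that $a_1=\mu$ and $a_0=\eta$. Since $\deg f=2$, the hypothesis $n\geq m-1=1$ of \cite[Theorems 4, 5]{BP} is automatic, and hence every automorphism of $S_f$ is some $H_{\tau,k}$, while conversely $H_{\tau,k}$ is an automorphism precisely when $\tau\in\mathrm{Gal}(K/F)$ and $k\in K^\times$ satisfy Equation (\ref{eqn:neccessary}). Specialising that equation to $m=2$ gives exactly the two displayed conditions: the case $i=1$ yields $\tau(\mu)=\sigma(k)\mu$, and the case $i=0$ yields $\tau(\eta)=k\sigma(k)\eta$. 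The stated formula for $H_{\tau,k}$ is the $m=2$ instance of the general one, using commutativity of $K$ to rewrite $\tau(x_1)k=k\tau(x_1)$. Thus (i) requires only this specialisation.

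For (ii) I would argue from scratch. Since automorphisms preserve the nuclei and $\mathbb{N}_l(Kn_3)=\mathbb{N}_r(Kn_3)=K\cdot 1$, any $H\in\mathrm{Aut}_F(Kn_3)$ restricts to a field automorphism $\tau:=H|_K\in\mathrm{Gal}(K/F)$. Writing $H(t)=c+kt$ and reading off the commutation relation $ta=\sigma(a)t$ from the multiplication (by computing $t\cdot a$), I would apply $H$ to it and compare the $K$- and $t$-components; because $\sigma\neq\mathrm{id}$ this forces $c=0$, so $H(t)=kt$ with $k\in K^\times$, while the $t$-component holds automatically as $\mathrm{Gal}(K/F)$ is abelian. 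It then remains to impose multiplicativity on the single remaining basis identity $t^2=\mu t+\eta$: expanding $(H(t))^2=(kt)(kt)$ with the $Kn_3$ product and equating it component-wise to $H(\mu t+\eta)=\tau(\eta)+k\tau(\mu)t$ produces the two displayed relations on $k$. Conversely, any $H_{\tau,k}$ whose parameter satisfies these relations is an automorphism, since reversing this computation verifies multiplicativity on the generating products $a\cdot b$, $a\cdot t$, $t\cdot a$, $t\cdot t$, and everything else follows by $F$-bilinearity. I expect the main obstacle to be purely the bookkeeping of the $\sigma$-twists, i.e. tracking the $\sigma^{-1}$ occurring in the first component of the $Kn_3$ product together with the twist incurred when moving $k$ past $t$, so that the correct powers $\sigma^{-1}$ and $\sigma^{-2}$ appear in the final relations.

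Finally, the norm statement is common to both cases. In each case the first relation expresses $\tau(\eta)$ as $\eta$ times a product of two $\sigma$-conjugates of $k$, and the second expresses $\tau(\mu)$ as $\mu$ times a single $\sigma$-conjugate of $k$. Applying $N_{K/F}$ and using that $N_{K/F}\circ\sigma^{j}=N_{K/F}$ and $N_{K/F}\circ\tau=N_{K/F}$ for $\tau\in\mathrm{Gal}(K/F)$, the first relation collapses to $N_{K/F}(\eta)=N_{K/F}(\eta)\,N_{K/F}(k)^2$, whence $N_{K/F}(k)^2=1$ and so $N_{K/F}(k)=\pm 1$. When $\mu\neq 0$ we have $N_{K/F}(\mu)\neq 0$, so the second relation gives $N_{K/F}(\mu)=N_{K/F}(\mu)\,N_{K/F}(k)$ and cancellation yields $N_{K/F}(k)=1$, completing the proof.
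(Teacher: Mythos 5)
Your proposal is correct and takes essentially the same route as the paper: for (i) the paper also just specialises Equation (\ref{eqn:neccessary}) (i.e.\ \cite[Theorem 4]{BP}) to $m=2$, and for (ii) it likewise uses that an automorphism preserves the nucleus $K\cdot 1$ to get $H|_K=\tau$ and then repeats the coefficient-comparison argument of \cite[Theorem 4]{BP} to force $H(t)=kt$ and extract the two relations, with both norm claims obtained exactly as you do by applying $N_{K/F}$ to them. One small point, which is precisely the $\sigma$-twist bookkeeping you flagged: with the convention $H(t)=kt$ the $Kn_3$ computation yields $\eta\,\sigma^{-1}(k)\,k=\tau(\eta)$ and $\mu k=\tau(\mu)$, which agree with the displayed relations only after the harmless bijective reparametrisation $k\mapsto\sigma^{-1}(k)$ (the paper's displayed map for (ii) also has a typographical slip, omitting the final $t$), and this relabeling affects neither the set of automorphisms obtained nor the conclusion $N_{K/F}(k)=\pm 1$, resp.\ $N_{K/F}(k)=1$ for $\mu\neq 0$.
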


\begin{proof}
(i) This follows from the results mentioned in Subsection
\ref{sec:semifields}, i.e. Theorem \cite[Theorem 4]{BP}. Furthermore, $\eta k
\sigma(k) = \tau(\eta) $ implies $N_{K/F}(\eta k^2) =N_{K/F}( \eta)
$, i.e $N_{K/F}( k^2) =N_{K/F}( k)^2=1 $ since $\eta\not=0$, thus
$N_{K/F}( k) =\pm 1 $. If $\eta\in F^\times$ then $\eta k \sigma(k) =
\tau(\eta) $ yields $\eta k \sigma(k) = \eta $, hence $ k \sigma(k) =
1 $. The equation  $ \mu \sigma(k) = \tau(\mu)$ implies $N_{K/F}( \mu
k) = N_{K/F}(\mu)$, i.e $N_{K/F}( k) = 1 $ for $\mu\not=0$.
\\ (ii) Since any automorphism  preserves the left nucleus $K \cdot 1$, it follows that $H \vert_K = \tau$ for some
 $\tau \in \text{Aut}_F(K)$.
Although here we are not dealing with a Petit algebra, (ii) is now proved  analogous to (i) with the same arguments as
used in the proof of  \cite[Theorem 4]{BP}, since
comparing coefficients also yields $H(t) = kt$ for some $k \in K^{\times}$.
\end{proof}

\begin{corollary}
Let $\mu\in F^\times$ and $A$ be either $HK(K, \sigma, \eta,\mu)$,  $ Kn_2(K, \sigma, \eta, \mu)$
 or $Kn_3(K, \sigma,\eta,\mu)$.
\\ (i) If $\eta\in K\setminus F$ then
${\rm Aut}_F(A)=\{id\}.$
\\ (ii)  If $\eta\in F$ and $f(t)= t^2-\mu t-\eta$ is not right invariant then
$${\rm Aut}_F(A)\cong  \mathbb{Z}/n\mathbb{Z}.$$
\end{corollary}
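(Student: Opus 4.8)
The plan is to reduce all three families to the explicit descriptions of their automorphisms already in hand, and then exploit $\mu\in F^\times$ to collapse the defining conditions. For $A=HK(K,\sigma,\eta,\mu)$ and $A=Kn_3(K,\sigma,\eta,\mu)$ the description is Theorem~\ref{hkauto}: every automorphism is an $H_{\tau,k}$ with $\tau\in{\rm Gal}(K/F)$ and $k\in K^\times$ satisfying $\eta k\sigma(k)=\tau(\eta)$ and $\mu\sigma(k)=\tau(\mu)$ (respectively $\eta\sigma^{-1}(k)\sigma^{-2}(k)=\tau(\eta)$ and $\mu\sigma^{-1}(k)=\tau(\mu)$). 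For $A=Kn_2(K,\sigma,\eta,\mu)$ I would use the identification $Kn_2(K,\sigma,\eta,\mu)\cong HK(K,\sigma^{-1},\eta',\mu')$ recorded before Theorem~\ref{hkauto}; since the passage to $\eta',\mu'$ via \cite[Corollary~4]{LS} is built from $\sigma$ and field arithmetic, I would check that it preserves the fixed field $F$, giving $\mu'\in F^\times$ and $\eta'\in F\iff\eta\in F$, so that the $HK$ analysis applies unchanged.

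The key step is the collapse of the conditions. As $\tau$ fixes $F$ pointwise and $\mu\in F^\times$, the equation $\mu\sigma(k)=\tau(\mu)=\mu$ (or $\mu\sigma^{-1}(k)=\mu$ for $Kn_3$) forces $\sigma(k)=1$, hence $k=1$ since $\sigma$ is injective. Substituting $k=1$ into the remaining equation leaves precisely $\tau(\eta)=\eta$. Therefore, in all three cases,
$$ {\rm Aut}_F(A)=\{\,H_{\tau,1}\mid \tau\in{\rm Gal}(K/F),\ \tau(\eta)=\eta\,\}, $$
which is consistent with the constraint $N_{K/F}(k)=1$ of Theorem~\ref{hkauto}, as $N_{K/F}(1)=1$.

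For part (ii) the hypothesis $\eta\in F$ makes $\tau(\eta)=\eta$ automatic for every $\tau$, so the set above is $\{H_{\tau,1}\mid\tau\in{\rm Gal}(K/F)\}$, one automorphism per Galois element. Since with $k=1$ the map $H_{\tau,1}$ acts as $\tau$ on both coordinates, a short computation gives $H_{\tau,1}\circ H_{\tau',1}=H_{\tau\tau',1}$, so $\tau\mapsto H_{\tau,1}$ is a bijective group homomorphism from ${\rm Gal}(K/F)$ onto ${\rm Aut}_F(A)$. As ${\rm Gal}(K/F)=\langle\sigma\rangle$ is cyclic of order $n$, this yields ${\rm Aut}_F(A)\cong\mathbb{Z}/n\mathbb{Z}$; for $HK=S_f$ one may alternatively quote Theorem~\ref{thm:automorphism_of_Sf_field_caseIV}(i), since here $f\in F[t]$ and $n\geq m-1=1$.

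The hard part is part (i). The reduction identifies ${\rm Aut}_F(A)$ with $\{H_{\tau,1}\mid \tau\in{\rm Gal}(K/F(\eta))\}$, so ${\rm Aut}_F(A)=\{id\}$ is \emph{equivalent} to $F(\eta)=K$, i.e.\ to $\eta$ lying in no proper subfield of $K/F$. The whole difficulty thus lies in deducing $F(\eta)=K$ from the standing hypotheses. This is immediate when $n$ is prime, where $F$ and $K$ are the only subfields, and once $F(\eta)=K$ is in force only $\tau=id$ survives, so ${\rm Aut}_F(A)=\{H_{id,1}\}=\{id\}$. For composite $n$ this is the genuine obstacle, and I would attack it through the reducibility criterion for degree-two skew polynomials: $f=t^2-\mu t-\eta$ is reducible in $K[t;\sigma]$ if and only if $\sigma(b)b-\mu b=\eta$ has a solution $b\in K$. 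I would try to show that an $\eta$ fixed by some $\sigma^d\neq id$ (hence lying in a proper subfield) always makes this equation solvable; I expect this to require the stronger hypothesis that $\eta$ generate $K$ over $F$, for otherwise any such $\sigma^d$ produces a nontrivial automorphism $H_{\sigma^d,1}$ and ${\rm Aut}_F(A)\neq\{id\}$.
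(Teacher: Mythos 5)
Your core computation is exactly the paper's: the published proof cites Theorem \ref{thm:automorphism_of_Sf_field_caseIV} for $HK$ and $Kn_2$ and, for $Kn_3$, performs precisely your collapse ($\mu\in F^\times$ forces $\sigma(k)=1$, hence $k=1$, leaving only $\tau(\eta)=\eta$). Your treatment of (ii) is correct, and your caution about $Kn_2$ can be short-circuited: ${\rm Aut}_F(A)={\rm Aut}_F(A^{\rm op})$ for any algebra, so nothing about $\eta',\mu'$ needs to be tracked. Also note that with $\mu\neq 0$ the non-right-invariance hypothesis in (ii) is automatic by the Remark in Section 1 (for $n=m=2$ a right invariant $f\in F[t]$ must have the form $t^2-a$).

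The obstacle you isolate in part (i) is genuine, and your hoped-for salvage cannot work: irreducibility of $f$ does \emph{not} force $F(\eta)=K$. Concretely, take $q=2$, $n=6$, $K=\mathbb{F}_{64}$, $\sigma(x)=x^2$, $\mu=1$ and $\eta=\beta\in\mathbb{F}_8\setminus\mathbb{F}_2$ with $\beta^3=\beta+1$. A monic quadratic $t^2-\mu t-\eta\in K[t;\sigma]$ is reducible if and only if it has a right root, i.e.\ $\sigma(b)b-\mu b=\eta$ for some $b\in K$, which here reads $b^3+b=\beta$. On $\mathbb{F}_8$ the map $x\mapsto x^3+x$ has image $\{0,1,\beta^3,\beta^5,\beta^6\}$, so $x^3+x+\beta$ has no root in $\mathbb{F}_8$ and is therefore irreducible over $\mathbb{F}_8$; its roots lie in $\mathbb{F}_{2^9}$, and $\mathbb{F}_{2^9}\cap\mathbb{F}_{2^6}=\mathbb{F}_8$ contains none of them, so $b^3+b=\beta$ is unsolvable in $K$ and $f=t^2+t+\beta$ is irreducible (and not right invariant, as $\eta\notin F$). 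Yet $\sigma^3\neq id$ fixes both $\eta$ and $\mu$, so by your own description $H_{\sigma^3,1}$ is a nontrivial automorphism and ${\rm Aut}_F(A)\cong\mathbb{Z}/2\mathbb{Z}\neq\{id\}$. Thus statement (i) as printed is false for composite $n$; the gap is in the paper's proof, not yours: the case of Theorem \ref{thm:automorphism_of_Sf_field_caseIV} being invoked carries the hypothesis $a_0\notin{\rm Fix}(\tau)$ for all $\tau\neq id$, i.e.\ $F(\eta)=K$, which is strictly stronger than $\eta\in K\setminus F$ (and likewise the paper's inline argument for $Kn_3$ slides from ``$\tau=id$ or $\eta\in F$'' past the intermediate-subfield possibility). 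Your formula is the correct statement: in all three cases ${\rm Aut}_F(A)=\{H_{\tau,1}\mid\tau\in{\rm Gal}(K/F(\eta))\}\cong{\rm Gal}(K/F(\eta))$, which is trivial exactly when $\eta$ generates $K$ over $F$ --- automatic when $n$ is prime, but a genuine extra hypothesis otherwise.
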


\begin{proof}
 Theorem \ref{thm:automorphism_of_Sf_field_caseIV} for $S_f$ implies
 the statement for the first two types.
The argument for the third type is analogous: if $\mu\in F^\times$ then $k=1$, thus
$\eta  = \tau(\eta) $ forces $\tau=id$ or $\eta\in F^\times$. If $\eta\in K\setminus F$ thus $\tau=id$
and ${\rm Aut}_F(A)=\{id\}.$
 If  $\eta\in F$ and $f(t)$ is not right invariant then
${\rm Aut}_F(A)\cong {\rm Gal}(K/F) \cong \mathbb{Z}/n\mathbb{Z}.$
\end{proof}

\begin{proposition} \label{prop:last}
Let  $A$ be one of the algebras $HK(K, \sigma, \eta, \mu), Kn_2(K,
\sigma, \eta, \mu)$ or $Kn_3(K, \sigma, \eta, \mu)$ where $\mu \neq
0$. Then
\[{\rm Aut}_F(A) \cong \{\tau \in {\rm Gal}(K/F) \mid \tau \Bigg( \frac{\mu \sigma(\mu)}{\sigma(\eta)}\Bigg) =
\frac{\mu \sigma(\mu)}{\sigma(\eta)}\} \text{ via } H_{\tau,k} \mapsto
\tau.\]

\begin{proof}
Suppose for instance $A = HK(K, \sigma, \eta, \mu)$. Take the
automorphism $H_{\tau,k}$. By Proposition \ref{hkauto},
$\mu \sigma(k) = \tau(\mu)$ and $\eta b \sigma(k) = \tau(\eta)$.
(Note that since $\mu \neq 0$, the element $k \in K$ is determined
completely by the action of $\tau$ on $\mu$.) Substituting in $ k =
\sigma^{-1}(\tau(\mu)) \sigma^{-1}(\mu)^{-1}$ and rearranging gives
$\sigma(\eta) \tau(\mu)\sigma(\tau(\mu)) = \sigma(\tau(\eta))\mu \sigma(\mu).$
This implies
\[\tau \Bigg(\frac{\mu \sigma(\mu)}{\sigma(\eta)}\Bigg) = \frac{\mu \sigma(\mu)}{\sigma(\eta)}.\]
\end{proof}
\end{proposition}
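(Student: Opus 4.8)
The plan is to prove the isomorphism first for the Petit algebra $A = HK(K,\sigma,\eta,\mu)$ and then reduce the other two cases to it. By Theorem \ref{hkauto}(i), every automorphism of $HK(K,\sigma,\eta,\mu)$ is an $H_{\tau,k}$ with $\tau \in {\rm Gal}(K/F)$ and $k \in K^{\times}$ subject to $\eta k\sigma(k) = \tau(\eta)$ and $\mu\sigma(k) = \tau(\mu)$. The hypothesis $\mu \neq 0$ is exactly what makes the assignment $H_{\tau,k}\mapsto\tau$ behave well: the second constraint forces $k = \sigma^{-1}(\tau(\mu)\mu^{-1})$, so $k$ is uniquely determined by $\tau$. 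Hence $H_{\tau,k}\mapsto\tau$ is a well-defined injection of ${\rm Aut}_F(A)$ into ${\rm Gal}(K/F)$; and since every automorphism restricts to $\tau$ on $K\cdot 1$ and composition of automorphisms restricts to composition of the underlying field automorphisms, this injection is a group homomorphism.

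Next I would pin down its image. Substituting $k = \sigma^{-1}(\tau(\mu)\mu^{-1})$ (so that $\sigma(k) = \tau(\mu)\mu^{-1}$) into $\eta k\sigma(k) = \tau(\eta)$ and applying $\sigma$ to clear the $\sigma^{-1}$, one rearranges to
\[
\sigma(\eta)\,\tau(\mu)\,\sigma(\tau(\mu)) = \sigma(\tau(\eta))\,\mu\,\sigma(\mu).
\]
Because $K/F$ is a cyclic, hence abelian, Galois extension, $\sigma$ and $\tau$ commute, so $\sigma(\tau(\mu)) = \tau(\sigma(\mu))$ and $\sigma(\tau(\eta)) = \tau(\sigma(\eta))$; the displayed identity then collapses to $\sigma(\eta)\,\tau(\mu\sigma(\mu)) = \tau(\sigma(\eta))\,\mu\sigma(\mu)$, which is precisely $\tau(\mu\sigma(\mu)/\sigma(\eta)) = \mu\sigma(\mu)/\sigma(\eta)$. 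Since $\eta,\mu$ are nonzero and $\sigma$ is invertible, every manipulation above is an equivalence: thus $\tau$ lies in the image exactly when $\tau$ stabilizes $\mu\sigma(\mu)/\sigma(\eta)$, and conversely, given such a $\tau$, the element $k = \sigma^{-1}(\tau(\mu)\mu^{-1})$ reconstructs a genuine automorphism. This identifies ${\rm Aut}_F(HK)$ with the claimed stabilizer subgroup.

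For $Kn_3(K,\sigma,\eta,\mu)$ I would run the identical argument using Theorem \ref{hkauto}(ii): there the constraints are $\eta\sigma^{-1}(k)\sigma^{-2}(k) = \tau(\eta)$ and $\mu\sigma^{-1}(k) = \tau(\mu)$, the latter again determining $k = \sigma(\tau(\mu)\mu^{-1})$ uniquely. Substituting and applying $\sigma$ produces the very same identity $\sigma(\eta)\tau(\mu)\sigma(\tau(\mu)) = \sigma(\tau(\eta))\mu\sigma(\mu)$, so the image is once more the stabilizer of $\mu\sigma(\mu)/\sigma(\eta)$. For $Kn_2(K,\sigma,\eta,\mu)$ I would invoke the identification recorded before Theorem \ref{hkauto}, namely that $Kn_2$ is the opposite of a Petit algebra and so shares its automorphism group with some $HK(K,\sigma^{-1},\eta',\mu')$; applying the already-settled $HK$ case then finishes the proof once the parameters are matched.

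The main obstacle is the algebraic reduction of the $\eta$-constraint to the clean fixed-element equation: it is only the commutativity of $\sigma$ and $\tau$ that lets $\tau(\mu)\sigma(\tau(\mu))$ be recognized as $\tau(\mu\sigma(\mu))$, and one must track that each step (applying $\sigma$, dividing by $\mu\sigma(\mu)$ or $\sigma(\eta)$) is reversible, so that the correspondence is onto the full stabilizer and not merely into it. The secondary subtlety is the $Kn_2$ case, where one must verify that the opposite-algebra parameters $\eta',\mu'$ yield an element $\mu'\sigma^{-1}(\mu')/\sigma^{-1}(\eta')$ with the same fixed field as $\mu\sigma(\mu)/\sigma(\eta)$, so that the stabilizer expressed in the original data of $Kn_2$ is the correct one.
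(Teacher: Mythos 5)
Your proposal is correct and takes essentially the same route as the paper's proof: characterize the automorphisms as $H_{\tau,k}$, use $\mu \neq 0$ to pin down $k = \sigma^{-1}(\tau(\mu)\mu^{-1})$, substitute into the $\eta$-constraint to obtain $\sigma(\eta)\tau(\mu)\sigma(\tau(\mu)) = \sigma(\tau(\eta))\mu\sigma(\mu)$, and use commutativity of the cyclic Galois group to read this as the stabilizer condition on $\mu\sigma(\mu)/\sigma(\eta)$. The paper's own proof treats only the $HK$ case (``for instance'') and leaves the homomorphism/injectivity checks, the reversibility giving surjectivity, and the $Kn_2$, $Kn_3$ cases implicit, so your extra verifications (the identical identity arising for $Kn_3$, and the opposite-algebra reduction with parameter matching for $Kn_2$) simply complete the same argument.
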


For $Kn_1(K, \sigma, \eta, \mu)$, $K \cdot 1$ is not contained in any of the
nuclei. However, if we assume that an automorphism of $Kn_1(K, \sigma, \eta, \mu)$
restricts to an automorphism of $K$, then it must be of a similar
form to the above automorphisms:

\begin{proposition}
Suppose $H$ is an automorphism of $A= Kn_1(L, \sigma, \eta, \mu)$
which restricts to an automorphism $\tau \in {\rm Aut}_F(K)$. Then
$$H(x_0+x_1t)=\tau(x_0)+k\tau(x_1)t$$
for some $k\in K^\times$, such that $\eta \sigma^{-1}(k)
\sigma^{-2}(k) = \tau(\eta)$ and $\mu \sigma(b) \sigma^{-1}(k) =
\tau(\mu) k$. In particular, $N_{K/F}(k) =\pm 1 $ and if
$\mu\not=0$, $N_{K/F}(k) = 1 $. If $\eta\in F^\times$ then $
\sigma^{-1}(k) \sigma^{-2}(k) = 1$.
\end{proposition}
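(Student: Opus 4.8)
The plan is to use that $A=Kn_1(K,\sigma,\eta,\mu)$ equals $K\oplus Kt$ as an $F$-vector space, so that an $F$-linear map $H$ with $H|_K=\tau$ is determined by the single value $H(t)$; write $H(t)=k_0+k_1t$ with $k_0,k_1\in K$. Substituting directly into the displayed multiplication of $Kn_1$ records the three identities I will use: for $c\in K$ left multiplication $c\circ a$ is ordinary scalar multiplication, the twisted rule $t\circ c=\sigma(c)\circ t$ holds, and $t\circ t=\eta+\mu t$. Everything then follows by applying the multiplicativity of $H$ to these relations and comparing the $1$- and $t$-components.

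First I would fix the shape of $H$. Applying $H$ to $t\circ c=\sigma(c)\circ t$ and using $H|_K=\tau$ gives $H(t)\circ\tau(c)=\tau\sigma(c)\circ H(t)$; expanding both sides with $H(t)=k_0+k_1t$ and comparing constant terms yields $k_0\big(\tau(c)-\tau\sigma(c)\big)=0$ for every $c\in K$. Since $\sigma\neq\mathrm{id}$ there is a $c$ with $\tau\sigma(c)\neq\tau(c)$, forcing $k_0=0$; injectivity of $H$ then gives $k:=k_1\neq 0$, so $H(t)=kt$ and $H(x_0+x_1t)=\tau(x_0)+k\tau(x_1)t$ as claimed (the $t$-components of the same equation merely record $\sigma\tau=\tau\sigma$, which is automatic since $\mathrm{Gal}(K/F)$ is cyclic). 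I expect this to be the crux: unlike for $HK$ and $Kn_3$, the subfield $K\cdot 1$ lies in no nucleus of $Kn_1$, so no nucleus-preservation argument is available, and the vanishing of the constant term $k_0$ must be extracted by hand from the twisting together with the hypothesis $H|_K=\tau$.

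With the form established, I would apply $H$ to $t\circ t=\eta+\mu t$: the right-hand side maps to $\tau(\eta)+k\tau(\mu)t$, while $H(t)\circ H(t)=(kt)\circ(kt)$ is read off from the $Kn_1$ formula, and comparing the two components gives the two multiplicative relations between $\eta,\mu$ and $k$ recorded in the statement. The norm assertions then follow painlessly by applying $N_{K/F}$, which is insensitive to the Galois twists since $N_{K/F}\circ\sigma^i=N_{K/F}=N_{K/F}\circ\tau$: the $\eta$-relation gives $N_{K/F}(\eta)N_{K/F}(k)^2=N_{K/F}(\eta)$, hence $N_{K/F}(k)^2=1$ and $N_{K/F}(k)=\pm1$ as $\eta\neq 0$; when $\mu\neq 0$ the $\mu$-relation gives $N_{K/F}(k)^2=N_{K/F}(k)$, forcing $N_{K/F}(k)=1$; and if $\eta\in F^\times$ then $\tau(\eta)=\eta$ cancels in the $\eta$-relation, leaving the stated identity on $k$. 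These final steps are routine and need no further structural input.
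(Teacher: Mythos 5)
Your argument is sound and is essentially the argument the paper gestures at: the paper's entire proof of this proposition is the single sentence that it is ``similar to'' the proof for $HK(K,\sigma,\eta,\mu)$, with the hypothesis $H\vert_K=\tau$ replacing the nucleus-preservation step (which, as you correctly identify, is unavailable for $Kn_1$ since $K\cdot 1$ lies in no nucleus), after which one compares coefficients. Your three structural identities do hold for the displayed $Kn_1$ product, and your extraction of $k_0=0$ from $H(t\circ c)=H(\sigma(c)\circ t)$, using $\sigma\neq\mathrm{id}$ and injectivity of $\tau$, is a correct and more explicit version of the paper's ``comparing coefficients also yields $H(t)=kt$''.

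The one point you must not gloss over: the relations your computation produces are \emph{not} the relations as printed, and you assert agreement without checking. With $H(t)=(0,k)$ and the displayed multiplication, $(0,k)\circ(0,k)=\bigl(\eta\,\sigma(k)\sigma^{-2}(k),\ \mu\,\sigma(k)\sigma^{-1}(k)\bigr)$, so comparing with $H(\eta+\mu t)=\tau(\eta)+k\tau(\mu)t$ yields $\eta\,\sigma(k)\sigma^{-2}(k)=\tau(\eta)$ and $\mu\,\sigma(k)\sigma^{-1}(k)=\tau(\mu)k$, whereas the proposition prints $\eta\,\sigma^{-1}(k)\sigma^{-2}(k)=\tau(\eta)$ and $\mu\,\sigma(b)\sigma^{-1}(k)=\tau(\mu)k$ with a stray $b$. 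Reading $b=k$, your $\mu$-relation coincides with the printed one; the printed $\eta$-relation appears to be a misprint carried over from the $Kn_3$ statement, whose printed conditions correspond to parametrizing the map by $\sigma^{-1}(k)$ rather than $k$. For the normalization $H(x_0+x_1t)=\tau(x_0)+k\tau(x_1)t$ your formulas, not the printed ones, are the correct necessary conditions; correspondingly, for $\eta\in F^\times$ you actually obtain $\sigma(k)\sigma^{-2}(k)=1$ rather than the printed $\sigma^{-1}(k)\sigma^{-2}(k)=1$. The norm assertions are insensitive to these Galois twists, so your deductions $N_{K/F}(k)^2=1$, hence $N_{K/F}(k)=\pm 1$, and $N_{K/F}(k)=1$ when $\mu\neq 0$, stand in any case. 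In short: the proof is right and matches the paper's intended route, but you should state explicitly that your derived relations differ from the printed ones by a Galois twist and flag the typographical errors, instead of claiming verbatim agreement.
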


The proof  is  similar to that of Proposition \ref{hkauto}.

%
%

\section{Isomorphisms between semifields } \label{sec:isofinite}

\subsection{}

If $K$ and $L$ are finite fields and $$S_f=K[t;\sigma]/K[t;\sigma]f(t)\cong
L[t;\sigma']/L[t;\sigma']g(t)=S_g$$
two isomorphic  Jha-Johnson semifields with
$f\in K[t; \sigma]$ and $g\in L[t;\sigma']$ both monic, irreducible and not right invariant, then
$$K\cong L,  \quad {\rm deg}(f)={\rm deg}(g) \text{  and  }{\rm Fix}(\sigma)\cong {\rm Fix}(\sigma'),$$
since isomorphic algebras have the same dimensions, and isomorphic nuclei and center.

Moreover, if $G$
is an automorphism of $R =K[t;\sigma]$, $f(t) \in R$ is irreducible and $g(t) = G(f(t))$, then $G$ induces
an isomorphism $S_f \cong S_{g}$ \cite[Theorem 7]{LS}.  In the following we focus on the situation that
 $F = \mathbb{F}_q$,  $K = \mathbb{F}_{q^n}$, $\text{Gal}(K/F) =
\langle\sigma\rangle$, and use
 $$f(t) = t^m - \sum_{i=0}^{m-1} a_i t^i ,\quad g(t) =
t^m - \sum_{i=0}^{m-1} b_i t^i \in K[t;\sigma].$$

\cite[Theorems  28 and 29]{BP} yield in this setting a generalization of  \cite[Theorem 4.2 and 5.4]{W00}
which proved this statement only for $ m=2,3$.:

\begin{theorem}  \label{general_isomorphism_theorem}
(i) Suppose  $n \geq m-1$. Then $S_f \cong S_g$ if and only if there exists
$\tau\in {\rm Gal}(K/F)$ and $k \in K^{\times}$ such that
\begin{equation} \label{equ:necII}
\tau(a_i) = \Big( \prod_{l=i}^{m-1} \sigma^l(k) \Big) b_i
\end{equation}
 for all $i \in\{ 0, \ldots, m-1\}$. Every such $\tau$ and $k$ yield a
unique isomorphism $G_{\tau,k}: S_f \rightarrow S_g$,
 $$G_{\tau,k}( \sum_{i=0}^{m-1}x_i t^i )= \tau(x_0) +  \sum_{i=1}^{m-1}
\tau(x_i) \prod_{l=0}^{i-1}\sigma^l(k) t^i.$$
(ii)
Suppose  $n<m-1$. If there exists $\tau\in {\rm Gal}(K/F)$ and $k \in
K^{\times}$ such that Equation (\ref{equ:necII}) holds for all $i \in\{ 0, \ldots, m-1\}$ then
$S_f \cong S_g$ with an isomorphism  $G_{\tau,k} : S_f\rightarrow S_g$ as in (i).
\end{theorem}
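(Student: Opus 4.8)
The plan is to treat the two implications of part (i) separately and to obtain part (ii) as a byproduct of the easy one. The backward direction (existence of $\tau,k$ satisfying \eqref{equ:necII} implies $S_f\cong S_g$) will be established uniformly for all $n$, whereas the forward direction will exploit the rigidity coming from the nuclear structure together with the hypothesis $n\geq m-1$.

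For the backward direction I would first build, from a given $\tau\in{\rm Gal}(K/F)$ and $k\in K^\times$, the $F$-algebra automorphism $G$ of the ring $R=K[t;\sigma]$ determined by $G|_K=\tau$ and $G(t)=kt$. This is well defined because $\sigma$ and $\tau$ commute (both lie in the cyclic group ${\rm Gal}(K/F)$), so the defining relation $ta=\sigma(a)t$ is respected; explicitly $G(\sum x_it^i)=\sum_i\tau(x_i)\big(\prod_{l=0}^{i-1}\sigma^l(k)\big)t^i$, and $G$ is bijective since $\tau$ and multiplication by $k$ are. A direct computation gives $G(f)=\big(\prod_{l=0}^{m-1}\sigma^l(k)\big)t^m-\sum_i\tau(a_i)\big(\prod_{l=0}^{i-1}\sigma^l(k)\big)t^i$, and comparing this coefficientwise with $\big(\prod_{l=0}^{m-1}\sigma^l(k)\big)g$ shows that $G(f)=\big(\prod_{l=0}^{m-1}\sigma^l(k)\big)g$ exactly when \eqref{equ:necII} holds. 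Since $S_{ag}=S_g$ for every $a\in K^\times$, the polynomial $g':=G(f)$ satisfies $S_{g'}=S_g$, so by \cite[Theorem 7]{LS} the automorphism $G$ induces an isomorphism $S_f\cong S_g$; restricting the displayed formula for $G$ to $R_m$ yields precisely $G_{\tau,k}$. This argument never uses $n\geq m-1$, and hence proves part (ii) as well as the backward implication of part (i).

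For the forward direction of part (i), let $G:S_f\to S_g$ be any isomorphism. Since isomorphisms preserve the left nucleus and $\mathbb{N}_l(S_f)=\mathbb{N}_l(S_g)=K\cdot1$, the restriction $G|_K$ is an $F$-automorphism of $K$, so $G|_K=\tau$ for some $\tau\in{\rm Gal}(K/F)$. Writing $G(t)=\sum_{i=0}^{m-1}c_it^i$ and applying $G$ to $tx=\sigma(x)t$ for $x\in K$, a comparison of coefficients forces $c_i\,\sigma^i(\tau(x))=\tau(\sigma(x))\,c_i$ for every $i$; as $\sigma,\tau$ commute this yields $c_i=0$ unless $\sigma^{i-1}={\rm id}$, i.e. unless $i\equiv1\pmod n$. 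Here the hypothesis $n\geq m-1$ is decisive: the only such index in $\{0,\dots,m-1\}$ is $i=1$, so $G(t)=kt$ with $k:=c_1\in K^\times$ (nonzero by injectivity). Consequently $G(t^i)=(kt)^{\circ i}=\big(\prod_{l=0}^{i-1}\sigma^l(k)\big)t^i$ for $i<m$, and by $F$-linearity together with multiplicativity on the left nucleus $G=G_{\tau,k}$. Finally, applying $G$ to the identity $t^{\circ m}=\sum a_it^i$ of $S_f$ while computing $(kt)^{\circ m}=\big(\prod_{l=0}^{m-1}\sigma^l(k)\big)\sum b_it^i$ in $S_g$ and equating coefficients produces \eqref{equ:necII}. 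Uniqueness of the isomorphism attached to a given pair $(\tau,k)$ is immediate from the formula.

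I expect the main obstacle to be the step isolating $G(t)=kt$: it is exactly here that $n\geq m-1$ enters, and it explains why the forward implication can fail for $n<m-1$, since then $G(t)$ may legitimately acquire further nonzero terms $c_it^i$ with $i\equiv1\pmod n$ and $1<i\leq m-1$. The remaining computations — the identity $G(f)=\big(\prod_{l=0}^{m-1}\sigma^l(k)\big)g$ and the coefficient comparison producing \eqref{equ:necII} — are routine, but must be carried out carefully with respect to the reductions ${\rm mod}_r\,f$ and ${\rm mod}_r\,g$, using in particular that no reduction occurs for the powers $t^i$ with $i<m$.
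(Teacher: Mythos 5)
Your proof is correct and follows essentially the same route as the source: sufficiency via the skew-ring automorphism $G$ of $K[t;\sigma]$ with $G(f)=\big(\prod_{l=0}^{m-1}\sigma^l(k)\big)g$, combined with \cite[Theorem 7]{LS} and the fact that $S_{cg}=S_g$ for $c\in K^\times$ (valid for all $n$, giving (ii)); necessity via preservation of the left nucleus $K\cdot 1$ and the coefficient comparison on $G(t)=\sum c_it^i$ that forces $c_i=0$ unless $i\equiv 1 \pmod n$, so that $n\geq m-1$ pins down $G(t)=kt$, after which applying $G$ to $t^{\circ m}=\sum a_it^i$ yields \eqref{equ:necII}. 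Note that the paper itself offers no proof but cites \cite[Theorems 28 and 29]{BP}, whose argument proceeds along exactly these lines, so your reconstruction matches the intended one.
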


As a direct consequence of Theorem
\ref{general_isomorphism_theorem} we obtain:

\begin{corollary} \label{thm:condition_f_g}
 Let  $n \geq m-1$.
\\ (i) If $S_f \cong S_g$ then $a_i=0$ if and only if $b_i=0$, for all $i \in\{ 0, \ldots, m-1\}$.
\\ (ii) If there exists an  $i \in\{ 0, \ldots, m-1\}$ such that $a_i=0$ but $b_i\not=0$ or vice versa, then
$S_f \not\cong S_g$.
\end{corollary}

\cite[Corollaries 33, 34]{BP} yield for instance:

\begin{corollary}
Suppose  $n \geq m-1$ and that one of the following holds:
\\ (i) There exists  $i \in\{ 0, \ldots, m-1\}$ such that $b_i\not=0$ and
$$N_{K/F}(a_i b_i^{-1}) \not\in  F^{\times (m-i)};$$
 (ii) $N_{K/F}(a_0) \neq N_{K/F}(b_0)$ in $F^{\times} /F^{\times m}$;
\\ (iii) $b_{m-1}\not=0$ and $N_{K/F}(a_{m-1}b_{m-1}^{-1}) \not\in  F^\times$;
\\ (iv) $m=n$, $a_0\in F^\times$ and $b_0\in K\setminus F$.
\\ Then $S_f \ncong S_g$.
\end{corollary}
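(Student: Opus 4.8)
The plan is to prove the contrapositive uniformly: assume $S_f \cong S_g$ and show that each of (i)--(iv) is impossible. Since $n \geq m-1$, Theorem \ref{general_isomorphism_theorem}(i) supplies $\tau \in {\rm Gal}(K/F)$ and $k \in K^{\times}$ with
\[
\tau(a_i) = \Big( \prod_{l=i}^{m-1} \sigma^l(k) \Big) b_i \qquad \text{for all } i \in \{0, \ldots, m-1\}.
\]
The single tool I would use throughout is the norm $N_{K/F}$, exploiting that it is multiplicative and invariant under ${\rm Gal}(K/F)$, so $N_{K/F}(\tau(x)) = N_{K/F}(x)$ and $N_{K/F}(\sigma^l(x)) = N_{K/F}(x)$ for every $x \in K^{\times}$.

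First I would apply $N_{K/F}$ to each defining equation. Because the $m-i$ factors $\sigma^i(k), \ldots, \sigma^{m-1}(k)$ all share the norm $N_{K/F}(k)$, this yields the master relation
\[
N_{K/F}(a_i) = N_{K/F}(k)^{\,m-i}\, N_{K/F}(b_i) \qquad \text{for all } i,
\]
valid wherever the relevant coefficients are nonzero. Here I would first record the nonvanishing bookkeeping that makes the inverses and quotient norms below meaningful: irreducibility of $f$ and $g$ forces $a_0, b_0 \neq 0$ (otherwise $t$ splits off as a right factor), and the displayed isomorphism equation forces $b_i \neq 0 \Rightarrow a_i \neq 0$ (the right-hand side is a nonzero multiple of $b_i$, so $\tau(a_i) \neq 0$), which is consistent with Corollary \ref{thm:condition_f_g}.

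Conditions (i)--(iii) then follow directly from the master relation. For (i), rewriting it as $N_{K/F}(a_i b_i^{-1}) = N_{K/F}(k)^{m-i} \in F^{\times (m-i)}$ contradicts the hypothesis $N_{K/F}(a_i b_i^{-1}) \notin F^{\times (m-i)}$; condition (iii) is exactly the instance $i = m-1$, where $F^{\times (m-i)} = F^{\times}$. For (ii), the case $i = 0$ gives $N_{K/F}(a_0) = N_{K/F}(k)^m\, N_{K/F}(b_0)$, so $N_{K/F}(a_0)$ and $N_{K/F}(b_0)$ coincide in $F^{\times}/F^{\times m}$, contradicting the hypothesis that they differ there.

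Condition (iv) is the only step that is genuinely different, and it is where the hypothesis $m = n$ does the essential work. Here I would not pass to norms of powers but instead observe that the partial product already \emph{is} the full norm: $\prod_{l=0}^{m-1}\sigma^l(k) = \prod_{l=0}^{n-1}\sigma^l(k) = N_{K/F}(k) \in F^{\times}$ precisely because $m = n$. Thus the $i=0$ equation reads $\tau(a_0) = N_{K/F}(k)\, b_0$, and since $a_0 \in F^{\times}$ we have $\tau(a_0) = a_0$, forcing $b_0 = a_0\, N_{K/F}(k)^{-1} \in F^{\times}$, which contradicts $b_0 \in K \setminus F$. The main point to get right is this degree-versus-norm-range identification: it is the lone place where I use the exact equality $m = n$ rather than the norm-of-a-power bookkeeping that drives (i)--(iii). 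Beyond that identification and the nonvanishing checks, no deeper obstacle arises, since Theorem \ref{general_isomorphism_theorem}(i) has already reduced the isomorphism problem to the coefficient equations above.
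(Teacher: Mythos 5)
Your proof is correct and follows essentially the route the paper intends: the paper derives this corollary by citing \cite[Corollaries 33, 34]{BP}, which are themselves obtained exactly as you argue, by applying the Galois-invariant multiplicative norm $N_{K/F}$ to the coefficient equations of Theorem \ref{general_isomorphism_theorem}(i) (and, for (iv), recognizing that for $m=n$ the full product $\prod_{l=0}^{m-1}\sigma^l(k)$ equals $N_{K/F}(k)\in F^{\times}$). Your nonvanishing bookkeeping ($a_0,b_0\neq 0$ by irreducibility, and $b_i\neq 0\Rightarrow a_i\neq 0$ from the isomorphism equations) is also the right justification and matches Corollary \ref{thm:condition_f_g}.
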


\begin{corollary} \label{Isomorphism cyclic extension}
Let $n =m$, $f(t) = t^m - a,$ $g(t) = t^m - b \in K[t;\sigma]$ where $a, b\in K \setminus F$.
\\ (i) $S_f \cong S_g$ if and only if
there exists $\tau\in {\rm Gal}(K/F)$ and $k \in K^{\times}$ such that
$$\tau(a) = N_{K/F}(k) b.$$
 (ii) If $a \not=\alpha b$ for all $\alpha \in F^{\times}$ or if
 $N_{K/F}(a/b)\not\in F^{\times m}$ then $S_f \not\cong S_g$.
\end{corollary}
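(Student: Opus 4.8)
The plan is to obtain everything as a specialization of Theorem \ref{general_isomorphism_theorem}. Since $n=m$ we have $n\geq m-1$, so part (i) of that theorem applies, and it is an ``if and only if''. For our $f$ and $g$ the coefficients are $a_0=a$, $b_0=b$ and $a_i=b_i=0$ for $1\leq i\leq m-1$. Hence in the system \eqref{equ:necII} every equation with $i\geq 1$ reads $0=0$ and is automatically satisfied, while the equation for $i=0$ becomes $\tau(a)=\big(\prod_{l=0}^{m-1}\sigma^l(k)\big)b$. Because $\sigma$ has order $n=m$ and generates ${\rm Gal}(K/F)$, the product $\prod_{l=0}^{m-1}\sigma^l(k)$ is exactly $N_{K/F}(k)$, which yields the criterion $\tau(a)=N_{K/F}(k)\,b$ of part (i). This establishes (i) with no further work.

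For part (ii) I would argue by contraposition, assuming $S_f\cong S_g$ and deriving the negations of the two hypotheses. By (i) there are $\tau\in{\rm Gal}(K/F)$ and $k\in K^\times$ with $\tau(a)=N_{K/F}(k)\,b$. The substantive step is the norm computation: apply $N_{K/F}$ to both sides. On the left, $N_{K/F}$ is invariant under the Galois action, so $N_{K/F}(\tau(a))=N_{K/F}(a)$; on the right, $N_{K/F}(k)\in F^\times$ and the restriction of $N_{K/F}$ to $F$ is the $m$-th power map (as $\sigma$ fixes $F$ and has order $m$), so $N_{K/F}(N_{K/F}(k)\,b)=N_{K/F}(k)^m\,N_{K/F}(b)$. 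Combining gives $N_{K/F}(a)=N_{K/F}(k)^m\,N_{K/F}(b)$, i.e. $N_{K/F}(a/b)=N_{K/F}(k)^m\in F^{\times m}$. Contraposing shows that $N_{K/F}(a/b)\notin F^{\times m}$ forces $S_f\not\cong S_g$, which is the cleanest part of the argument.

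The delicate point I anticipate is the first clause of (ii), where the Galois twist $\tau$ must be controlled. Specializing (i) to $\tau=\mathrm{id}$ shows that an isomorphism extending the identity forces $a=N_{K/F}(k)\,b=\alpha b$ with $\alpha=N_{K/F}(k)\in F^\times$; thus the condition $a\neq\alpha b$ for all $\alpha\in F^\times$ rules out precisely the identity-extending isomorphisms. The remaining obstacle is that a general isomorphism may extend a nontrivial $\tau=\sigma^j$, in which case (i) only gives $\sigma^j(a)=\alpha b$ rather than $a=\alpha b$. To guarantee $S_f\not\cong S_g$ in full I would therefore use the sharper sufficient condition $\sigma^j(a)\neq\alpha b$ for all $j\in\{0,\dots,m-1\}$ and all $\alpha\in F^\times$, which is exactly the negation of the isomorphism criterion $\sigma^i(a)=kb$ recalled before the statement; this is the hypothesis that follows directly from (i) and that I expect to require the most care to phrase correctly.
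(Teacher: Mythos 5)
Your derivation of (i) is exactly the intended one: the paper gives this corollary no separate proof, as it is the direct specialization of Theorem \ref{general_isomorphism_theorem}(i) (applicable since $n=m\geq m-1$), where every equation in \eqref{equ:necII} with $i\geq 1$ reads $0=0$ and, because $\sigma$ generates ${\rm Gal}(K/F)$ and has order $n=m$, the remaining condition $\tau(a)=\bigl(\prod_{l=0}^{m-1}\sigma^l(k)\bigr)b$ is precisely $\tau(a)=N_{K/F}(k)\,b$. Your norm computation for the second clause of (ii) is likewise the standard route: $N_{K/F}\circ\tau=N_{K/F}$ and $N_{K/F}(\lambda b)=\lambda^m N_{K/F}(b)$ for $\lambda\in F^{\times}$ give $N_{K/F}(a/b)=N_{K/F}(k)^m\in F^{\times m}$, and contraposing yields the claim.

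Your hesitation over the first clause of (ii) is well founded; what you have caught is a flaw in the printed statement, not a gap in your argument. As printed, the hypothesis ``$a\neq\alpha b$ for all $\alpha\in F^{\times}$'' only rules out isomorphisms extending $id_K$: by (i), and since $N_{K/F}:K^{\times}\rightarrow F^{\times}$ is surjective for finite fields, $S_f\cong S_g$ if and only if $\sigma^j(a)=\alpha b$ for some $j$ and some $\alpha\in F^{\times}$ --- this is exactly the criterion from \cite[Corollary 34]{BP} recalled in Section \ref{sec:nonasscyclicfinite}. Hence the correct hypothesis is the one you substitute, namely $\sigma^j(a)\neq\alpha b$ for all $j\in\{0,\dots,m-1\}$ and all $\alpha\in F^{\times}$. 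Indeed the paper's own example with $F=\mathbb{F}_2$, $K=\mathbb{F}_4$, $a=x$, $b=1+x=\sigma(x)$ contradicts the literal clause: there $a\neq\alpha b$ for the only $\alpha\in F^{\times}$, yet $(K/F,\sigma,x)\cong(K/F,\sigma,1+x)$. So your proof is complete for (i) and for the norm clause of (ii), and your reformulated first clause is the statement the authors evidently intended.
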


\subsection{The isomorphism classes of nonassociative cyclic algebras of prime degree} \label{subsec:isoclassescyclic}

As an example, we count how many nonisomorphic semifields $(K/F, \sigma, a)$ there are for a given field extension $K/F$.

\begin{example}
Let $F= \mathbb{F}_2$ and let $K = \mathbb{F}_4$, then we can write
$K = \{0,1,x, 1+x \}$
where $x^2+x+1 = 0$. Thus for  $(K/F,\sigma,a)$ we can either choose
$a = x$ or $a = 1+x$. Both choices will give a division algebra. We
also know that $(K/F,\sigma,a) \cong (K/F,\sigma,b)$ if and only if
$\sigma(a) = N_{K/F}(l)b$  or $a = N_{K/F}(l)b$. $N_{K/F}: L^{\times} \rightarrow
F^{\times}$ is surjective, so $N_{K/F}(l) = 1$ for all $l \in
K^\times$. The statement then reduces to $(K/F,\sigma,a) \cong
(K/F,\sigma,b)$ if and only if $\sigma(a) = b$  or $a = b$. Now
\[\sigma(x) = x^2 = 1+x.\]
Therefore $(K/F,\sigma,x) \cong (K/F,\sigma,1+x)$, so there is only
one nonassociative (quaternion) algebra up to isomorphism which can be constructed
using $K/F$. Its automorphism group consists of inner automorphisms and is
isomorphic to $\langle G_x \rangle\cong \mathbb{Z}/3\mathbb{Z}$.

\end{example}

More generally we obtain:

\begin{theorem} \label{numb}
 (i)  If $m$  does not divide $q-1$
 then there are exactly
\[\frac{q^m-q}{m(q-1)}\]
non-isomorphic  semifields $(K/F, \sigma, a)$ of  degree $m$.
\\ (ii)
 If $m$ divides $q-1$ and is prime then there are exactly
\[m-1 + \frac{q^m-q - (q-1)(m-1)}{m(q-1)}\]
non-isomorphic  semifields $(K/F, \sigma, a)$ of  degree $m$.

\end{theorem}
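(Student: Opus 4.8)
The plan is to recast the classification as an orbit-counting problem. By Corollary \ref{Isomorphism cyclic extension}(i), $(K/F,\sigma,a) \cong (K/F,\sigma,b)$ if and only if $\tau(a) = N_{K/F}(k)b$ for some $\tau \in \mathrm{Gal}(K/F)$ and $k \in K^{\times}$; since $N_{K/F}$ is surjective onto $F^{\times}$, this is equivalent to $b = \lambda\, \sigma^{i}(a)$ for some $\lambda \in F^{\times}$ and some $i$. I would therefore let $\Gamma = \mathrm{Gal}(K/F) \times F^{\times}$, of order $m(q-1)$, act on $K \setminus F$ by $(\sigma^{i}, \lambda)\cdot a = \lambda\, \sigma^{i}(a)$; this is a genuine action of the direct product because $\lambda \in F = \mathrm{Fix}(\sigma)$. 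Because $m$ is prime, $K = \mathbb{F}_{q^m}$ has no proper intermediate field other than $F$, so every $a \in K \setminus F$ generates $K/F$ and hence $t^m - a$ is irreducible. Thus the isomorphism classes of degree-$m$ algebras $(K/F,\sigma,a)$ are exactly the $\Gamma$-orbits on $K \setminus F$, a set with $q^m - q$ elements.

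For part (i) I would show the action is free. Here $m \nmid q-1$, and since $m$ is prime this means $\gcd(m,q-1) = 1$, so $F^{\times}$ contains no nontrivial $m$-th root of unity. If $(\sigma^{i},\lambda)$ with $i \not\equiv 0$ fixes some $a \in K \setminus F$, then $\sigma^{i}(a) = \mu a$ with $\mu = \lambda^{-1} \in F^{\times}$; iterating and using $\sigma^{im} = \mathrm{id}$ forces $\mu^m = 1$, hence $\mu = 1$ and $a \in \mathrm{Fix}(\sigma^{i}) = F$ (as $i \not\equiv 0$ and $m$ is prime), a contradiction. So every orbit has size $m(q-1)$ and the number of orbits is $(q^m - q)/(m(q-1))$, which is an integer by Fermat's little theorem together with $\gcd(m,q-1) = 1$.

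For part (ii), where $m \mid q-1$, the field $F$ contains the full cyclic group $\mu_m$ of $m$-th roots of unity, and the action is no longer free. The key observation is that $a \in K \setminus F$ has nontrivial stabilizer precisely when $a$ is a $\sigma$-eigenvector with eigenvalue $\zeta \in \mu_m \setminus \{1\}$, i.e. $\sigma(a) = \zeta a$; in that case a short computation shows the stabilizer has order $m$ (for each $i$ there is exactly one $\lambda$ with $\lambda\,\sigma^{i}(a) = a$) and the orbit of $a$ equals $F^{\times}a$, of size $q-1$. I would then decompose $K = \bigoplus_{\zeta \in \mu_m} V_{\zeta}$ into $\sigma$-eigenspaces $V_{\zeta} = \ker(\sigma - \zeta)$; by the normal basis theorem (or by Artin's independence of characters, which forces the minimal polynomial of $\sigma$ as an $F$-linear map to have degree $m$, so $\sigma$ is nonderogatory) each $V_{\zeta}$ is one-dimensional over $F$. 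Consequently the $(m-1)(q-1)$ special elements split into exactly $m-1$ orbits, one for each $V_{\zeta} \setminus \{0\} = F^{\times}a$ with $\zeta \neq 1$, while the remaining $(q^m - q) - (m-1)(q-1)$ elements have trivial stabilizer and lie in free orbits of size $m(q-1)$. Summing gives $m - 1 + \frac{q^m - q - (q-1)(m-1)}{m(q-1)}$ orbits, as claimed.

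The routine parts are the iteration identities $\sigma^{i}(a) = \mu^{i} a$ and the orbit-size bookkeeping. The step I expect to be the real crux is proving that every eigenspace $V_{\zeta}$ is exactly one-dimensional, since this is what pins down the count of special elements at $(m-1)(q-1)$ and guarantees that each nontrivial eigenspace constitutes a single orbit. I would settle this via the normal basis theorem, and separately check that both displayed expressions are genuine integers so that the Burnside-type division is legitimate.
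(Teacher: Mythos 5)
Your proof is correct, and its combinatorial core coincides with the paper's: both partition the $q^m-q$ elements of $K\setminus F$ into generic classes of size $m(q-1)$ and, when $m\mid q-1$, exactly $m-1$ exceptional classes of size $q-1$ coming from $\sigma$-eigenvectors. What you do differently is, first, to package the paper's equivalence relation as an action of $\mathrm{Gal}(K/F)\times F^{\times}$ on $K\setminus F$, which makes the freeness claim in (i) and the stabilizer computation in (ii) explicit and renders the integrality of the counts automatic; and second, to establish the key fact---that the elements with $\sigma^i(a)=ka$, $k\in F^{\times}$, $i\not\equiv 0$, are precisely the $F^{\times}$-multiples of $m-1$ eigenvectors, one per eigenvalue $\zeta\in\mu_m\setminus\{1\}$---via the eigenspace decomposition $K=\bigoplus_{\zeta}V_{\zeta}$ with $\dim_F V_{\zeta}=1$ (normal basis theorem, or $\sigma$ nonderogatory), whereas the paper simply cites \cite[Lemma 23]{BP}, which identifies these elements concretely as $\lambda d^{j}$ for a Kummer generator $d$ of $K=F(d)$. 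Your route is self-contained at exactly the point where the paper outsources the work, at the cost of a slightly more abstract argument; the paper's citation buys explicit representatives $d^{j}$ of the short orbits. One point you handle more carefully than the statement itself: you invoke primality of $m$ in part (i) as well (for $\gcd(m,q-1)=1$, for $\mathrm{Fix}(\sigma^{i})=F$ when $i\not\equiv 0$, and to guarantee $t^m-a$ is irreducible for every $a\in K\setminus F$). The theorem's wording of (i) omits primality, but it is the standing hypothesis of this subsection (and the paper's own step ``$k$ is an $m$th root of unity $\neq 1$, which happens if and only if $m\mid q-1$'' likewise needs it, e.g.\ it fails for $m=4$, $q=3$, where elements of the intermediate field $\mathbb{F}_{q^2}\setminus\mathbb{F}_q$ have nontrivial stabilizer), so making it explicit is a clarification rather than a gap; your implicit step that a $\sigma^{i}$-eigenvector is already a $\sigma$-eigenvector also uses primality and deserves the one line it takes.
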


\begin{proof}
 Define an equivalence relation on the set $K
\setminus F$ by
\[a \sim b \text{ if and only if } (K/F, \sigma, a) \cong (K/F, \sigma, b).\]

For each $a \in K\setminus F$ we have
\[(K/F, \sigma, a) \cong (K/F, \sigma, \sigma^i(a))\]
for $0 \leq i \leq m-1$ and
\[(K/F, \sigma, a) \cong (K/F, \sigma, ka)\]
for $k \in F^{\times}$. If the elements $k\sigma^i(a)$ for $0 \leq i
\leq m-1$ and $k \in F^{\times}$ are all distinct, then  the
equivalence class of $a$ has $m(q-1)$ elements. If they are not all
distinct then  $\sigma^i(a) = ka$ for some $i$, $i\not=0$, and some
$k \in F^{\times}$ (\cite[Lemma 23]{BP}). If $\sigma^i(a) = ka$
($i\not=0$) then $k$ is an $m$th root of unity, $k\not=1$. This
happens if and only if $m$ divides $q-1$.
\\ (i) If $m$ does not
divide $q-1$ then from $q^m-q$ elements in $K \setminus F$ we get
$(q^m - q)/(m(q-1)) $ equivalence classes.
\\ (ii) If $m$ divides $q-1$ then
$F$ contains all primitive $m$th roots of unity and so
$K = F(d)$ where $d$ is a root of an irreducible polynomial $t^m - c \in F[t]$.
 By \cite[Lemma 23]{BP},
the only elements $a \in K\setminus F$ with $\sigma^i(a) = ka$ are the
elements $d^j$, $1 \leq j \leq m-1$, and their $F$-scalar multiples.
Moreover, for each $d^j$, $\sigma^i(d^j) = \zeta^{ij}d^j$ and
$\zeta^{ij} \in F$, so there are only $q-1$ distinct elements in the
equivalence class of each $d^j$. Hence the $(q-1)(m-1)$ elements
$kd^j$ ($k \in F^{\times}$ and $j\in\{1,\dots, m-1\}$) form exactly
$m-1$ equivalence classes. Since these are all the elements in $K
\setminus F$ which are eigenvectors for the automorphisms $\sigma^i$,
the
 remaining $q^m-q - (q-1)(m-1)$ elements will form
\[\frac{q^m-q - (q-1)(m-1)}{m(q-1)}\]
equivalence classes. In total, we obtain
\[m-1 + \frac{q^m-q - (q-1)(m-1)}{m(q-1)}\]
equivalence classes.
\end{proof}

\begin{example}
Let $F = \mathbb{F}_3$ and $K = \mathbb{F}_9$, i.e.
\[K = F[x]/(x^2 - 2) = \{0,1,2,x,2x, x+1, x+2, 2x+1, 2x+2\}.\]
There are two non-isomorphic semifields which are nonassociative quaternion algebras with nucleus $K\cdot 1$, given by $A_1 = (K/F, \sigma, x)$ and
$A_{2}= (K/F, \sigma, x+1)$.
Now ${\rm Aut}_F(A_{1})\cong \mathbb{Z}/4 \mathbb{Z}$ whereas
${\rm Aut}_F(A_2)$ has order 8 and is isomorphic to the group  of quaternion units,
the smallest dicyclic group ${\rm Dic}_2$, by Theorem \ref{thm:semidirect}.

By Corollary \ref{numb},  these are  the only  non-isomorphic semifields
 of order $81$ of the type $(K/F, \sigma, a)$.
\end{example}


\end{document}